\setlist[itemize] {label=$\pmb\triangleright$}
\definecolor{darkgreen}{rgb}{0,0.55,0}
\newtheorem{thm}{Theorem}[section]
\newtheorem{cor}[thm]{Corollary}
\newtheorem{lem}[thm]{Lemma}
\theoremstyle{definition}
\newtheorem{defn}[thm]{Definition}
\newtheorem{ass}[thm]{Assumption}
\theoremstyle{remark}
\newtheorem{rmk}[thm]{Remark}
\newtheorem{example}[thm]{Example}
\numberwithin{equation}{section}
\renewcommand\*{\cdot}
\newcommand\1{\mathbf{1}}
\newcommand\R{\mathbb{R}}
\newcommand\C{\mathbb{C}}
\newcommand\N{\mathbb{N}}
\providecommand{\setc}[2]{\left\{ #1 \cond #2\right\}}
\providecommand{\abs}[1]{\left\lvert#1\right\rvert}
\providecommand{\norm}[2]{\left\lVert#1\right\rVert_{#2}}
\newcommand{\scal}[3]{\left\langle #1, #2\right\rangle_{#3}}
\newcommand{\borel}[1]{\mathfrak B\left(#1\right)}
\newcommand{\RR}{\mathbb{R}}
\newcommand{\PP}{\mathbb{P}}
\newcommand{\NN}{\mathbb{N}}
\newcommand{\EE}{\mathbb{E}}
\newcommand{\cA}{\mathcal{A}}
\newcommand{\cB}{\mathcal{B}}
\newcommand{\cH}{\mathcal{H}}
\newcommand{\cP}{\mathcal{P}}
\newcommand{\cC}{\mathcal{C}}
\newcommand\ddt{\tfrac{\partial}{\partial t}}
\newcommand\ddx{\tfrac{\partial}{\partial x}}
\newcommand\ddxx{\tfrac{\partial^2}{\partial x^2}}
\newcommand\ddy{\tfrac{\partial}{\partial y}}
\newcommand\ddz{\tfrac{\partial}{\partial z}}
\newcommand\F{\mathcal{F}}
\renewcommand\d{\,\operatorname{d}\hspace{-0.05cm}}
\providecommand\cond{\,\middle\vert\,}
\renewcommand\cond{\,\middle\vert\,}
\renewcommand{\P}[1]{\mathbb{P}\left[#1\right]}                     % probability
\newcommand{\E}[1]{\mathbb{E}\left[#1\right]}                     % expectation
\newcommand\Id{\operatorname{Id}}                               % Identity Operator 
\newcommand\dom{\mathcal{D}}                                      % Domain
\newcommand\HS{\operatorname{HS}}
\newcommand\DA{\dom(\cA)}
\renewcommand\H{\mathfrak H}
\renewcommand\L{\mathfrak L}
\newcommand\HTp{\widehat{\mathcal H}_{T,p}}
\newcommand\Kfix{\mathcal{K}}
\newcommand\normTp[1]{\norm{#1}{T,2p}}
\newcommand\cI{\mathcal I}
\newcommand\trafo{F} %change of coordinates
\newcommand{\bmu}{{\overline \mu}}
\newcommand{\bsigma}{{\overline \sigma}}
\newcommand{\llbrak}{\llbracket}
\newcommand{\rrbrak}{\rrbracket}
\title{A Stefan-type stochastic moving boundary problem}
\author{Martin Keller-Ressel}
\address{Institut f\"ur Math. Stochastik, TU Dresden, Germany}
\email[M. ~Keller-Ressel]{martin.keller-ressel@tu-dresden.de}
\author{Marvin S. M\"uller}
\email[M. S.~M\"uller]{marvin.s.mueller@tu-dresden.de}
\thanks{The authors acknowledge funding from the German Research Foundation (DFG) under grants ZUK 64 and RTG 1845.}
\date{\today}
\begin{document}
\begin{abstract}
Motivated by applications in economics and finance, in particular to the modeling of limit order books, we study a class of stochastic second-order PDEs with non-linear Stefan-type boundary interaction. To solve the equation we transform the problem from a moving boundary problem into a stochastic evolution equation with fixed boundary conditions. Using results from interpolation theory we obtain existence and uniqueness of local strong solutions, extending results of Kim, Zheng and Sowers.  In addition, we formulate conditions for existence of global solutions and provide a refined analysis of possible blow-up behavior in finite time.
\end{abstract}

\maketitle
%\[\pd{u}{t}(t,x) = \alpha\,\pdn{2}{u}{x}(t,x), \]
\tableofcontents
\section{Introduction}
Moving boundary problems allow to model multi-phase systems with separating boundaries evolving in time. Typically, the evolution of the free interface is strongly coupled with the evolution of the whole system. A classical example is the so called Stefan problem introduced in 1888 by Josef Stefan~\cite{stefanEis}, which describes the evolution of temperature $v(t,x)$ in a system of water and ice. In one space dimension it reads as
\begin{gather}
  \label{eq:StefanProb}
  \begin{split}
    \ddt v(t,x) &= \eta_i \ddxx v(t,x),\quad  x> x_*(t), \\ 
    \ddt v(t,x) &= \eta_w \ddxx v(t,x),\quad x< x_*(t),\\
    v(t,x_*(t)) &= 0,
  \end{split}
\end{gather}
where $\eta_w$ and $\eta_i$ are the thermal diffusivities of ice and water, and $x_*(t)$ is the position of the interface between the two phases. The evolution of the interface is governed by the so-called Stefan condition
\begin{equation}
  \label{eq:StefanCond}
  \ddt x_*(t) = \varrho \cdot \left( \ddx v(t,x_*(t)+) - \ddx v(t,x_*(t)-)\right),  \qquad \varrho > 0.
\end{equation}
This problem and various extensions have been studied extensively in the second half of the 20th century, see~\cite{stefanSurvey} for a review of the literature. For classical solutions of semi-linear extensions of~\eqref{eq:StefanProb} see e.\,g.~\cite{fpNonlinear},~\cite{lunardiMovingBoundary}. In addition to the theory of classical and weak solutions, the corresponding evolution equations have been studied in the framework of maximal $L^p$-regularity, see~\cite{pruessGibbsCorr},~\cite{pruessClassical} and references therein. 
Compared to the deterministic case, stochastic partial differential equations with free or moving interface have received much less attention. One exception is \cite{BarbuDaPratoStefan} where Barbu and da Prato show existence of a solution and an invariant ergodic measure for the linear problem~\eqref{eq:StefanProb} with additive noise in multiple dimensions. 

More recently, both deterministic and stochastic moving boundary problems have been applied in economics and finance to dynamic models of trading, in particular to models of so-called (electronic) limit order books where orders of buyers and sellers participating in stock exchanges are stored, see e.g. \cite{lasrylions,zhengPhD,bouchaudReactionDiffusion,bhqFunctional}. In such models, the space coordinate $x$ typically corresponds to price (usually on logarithmic scale), and the quantity $v(t,x)$ to the density of buyers or sellers willing to commit to a transaction at time $t$ for the price $x$. Buyers are recorded with positive sign and sellers with negative sign, such that the two phases of the system distinguish buyers from sellers. Of particular interest is the evolution of the separating boundary, which corresponds to the marginal price at which both sellers are currently willing to sell and buyers are willing to buy. Zheng~\cite{zhengPhD} for example proposes the following stochastic moving boundary problem as a model for dynamic trading in a limit order book:
\begin{gather}
  \label{eq:ZhengProb}
  \begin{split}
    \ddt v(t,x) &= \eta_s \ddxx v(t,x) + \sigma_s(|x - x_*(t)|) d\xi_t(x),\quad  x> x_*(t), \\ 
    \ddt v(t,x) &= \eta_b \ddxx v(t,x)+ \sigma_b(|x - x_*(t)|) d\xi_t(x) ,\quad x< x_*(t),\\
    v(t,x_*(t)) &= 0,
  \end{split}
\end{gather}
where the subscripts $b$ and $s$ correspond to buyer and seller respectively and $d\xi_t(x)$ is Gaussian noise. The evolution of the interface is governed by the linear Stefan condition  \eqref{eq:StefanCond}.  The accompanying mathematical theory is developed in \cite{sowersEtAl, sowersEtAl2} and numerical analysis in \cite{sowersNumeric}. Other examples can be found in Lasry and Lions~\cite{lasrylions} where a free boundary model for price formation under negotiation is introduced in a mean-field game setting. Another part of the literature derives SPDE models for limit order books as functional limits from discrete queuing models of orders that arrive and then are filled or cancelled. For examples of this approach see e.\,g. \cite{bhqFunctional} where a parabolic SPDE as a model for the order book is obtained in the limit. In addition, there is a series of papers by Bouchaud et al.~\cite{bouchaudReactionDiffusion},~\cite{bouchaudSMBP} with PDE and SPDE models observed as limiting equations of particle models.

In this paper we study a Stefan-type stochastic moving boundary problem, which can be considered an extension of \eqref{eq:ZhengProb} and of the theory developed in \cite{sowersEtAl} with several important differences in scope and methodology:

\begin{itemize}
\item Instead of the homogeneous linear stochastic Stefan problem, we allow for a more general drift coefficient and in particular a non-linear boundary condition replacing \eqref{eq:StefanCond}. Recent empirical studies of the dependency of price change on the imbalance of the order book (see \cite{contImbalance} and ~\cite{liptonImbalance}) suggest linear behaviour for balanced order books and non-linear behaviour when imbalance is large. 
\item In addition to mild and weak solutions as in ~\cite{sowersEtAl} we obtain solutions in the analytically strong sense and make the transformation from free to fixed boundary, that is introduced in a deterministic setting in \cite{lunardiMovingBoundary} and used in \cite{sowersEtAl}, rigorous in a stochastic setting.
\item We combine tools from the SPDE framework of da Prato and Zabczyk (cf.~\cite{dPZinf}) with results from interpolation theory, which allows for greater generality and avoids direct computations using the heat kernel as in ~\cite{sowersEtAl}
\end{itemize}

\section{A stochastic moving boundary problem}
\label{sec:sfbp}
%% Formulation of the stoch. free bdry problem
%% Formulation of the main theorems according to the SPDE and the stoch evolution equation
%% Criterions for global solution and Nagumo-type condition
%% LOB-Model --> ggf. eigene Section
%SourceDoc 0paper.tex
\subsection{Problem formulation}
Our goal is to establish a framework for solving stochastic moving boundary problems of the type
% \begin{gather}
%   \begin{split}
%     \d v_1(t,x) &= \left[\eta_1 \ddxx v_1 +  \mu_1\left(x-x_*(t), v_1, \ddx v_1 \right) \right] \d t + \sigma_+\left(x-x_*(t), v_1\right)\d \xi_t,\\
%     v_1(t,x) &= 0, \quad \forall x < x_*(t),\\
%     \d v_2(t,x) &= \left[\eta_2 \ddxx v_2 +\mu_2\left(x-x_*(t), v_2, \ddx v_2 \right) \right] \d t + \sigma_-\left(x-x_*(t), v_2\right) \d \xi_t,\\
%     v_2(t,x) &= 0, \quad \forall x > x_*(t),\\
%     \ddt x_*(t) &= \varrho\left(v_1(t,x_*(t)+), \ddx v_1(t, x_*(t)+), v_2(t,x_*(t)-), \ddx v_2(t,x_*(t)-)\right),
%   \end{split}\label{eq:fbp}\tag{FBP}
% \end{gather}
\begin{gather}
  \begin{split}
    \d v(t,x) &= \left[\eta_+ \ddxx v +  \mu_+\left(x-x_*(t), v, \ddx v \right) \right] \d t\\
    & \qquad \qquad\qquad\qquad\qquad+ \sigma_+\left(x-x_*(t), v\right)\d \xi_t (x), \quad  x > x_*(t),\\
    \d v(t,x) &= \left[\eta_- \ddxx v +\mu_-\left(x-x_*(t), v, \ddx v \right) \right] \d t\\
    &\qquad\qquad\qquad\qquad\qquad + \sigma_-\left(x-x_*(t), v\right) \d \xi_t(x), \quad  x < x_*(t),\\
    \intertext{with the moving boundary $x_*(t)$ governed by}
    \ddt x_*(t) &= \varrho\Big(\ddx v(t, x_*(t)+), \ddx v(t,x_*(t)-)\Big),
  \end{split}\label{eq:fbp}
\end{gather}
for $t\geq 0$, $x\in \R$, with Dirichlet boundary conditions at $x_*$, i.\,e.,
\begin{gather}
  \begin{split}
    v(t,x_*(t)+) &= 0,\\
    v(t,x_*(t)-) &= 0,
  \end{split}\label{eq:bc}\tag{BC}
\end{gather}
for $t\geq 0$. The coefficients are functions $\mu_\pm: \R^3\rightarrow \R$, $\sigma_\pm: \R^2\rightarrow \R$, and real numbers $\eta_\pm>0$. We denote by $\xi$ the spatially colored noise given by 
\begin{equation}
  \label{eq:colorednoise}
  \xi_t(x) := \int_0^t T_\zeta\d W_s (x),\qquad T_\zeta w(x) := \int_\R \zeta(x,y) w(y) \d y,\quad x\in \R,
\end{equation}
for some integral kernel $\zeta: \R^2\rightarrow \R$ and a cylindrical Wiener process $W$ on the Hilbert space $U=L^2(\R)$ with covariance operator identity. As usual, $W$ lives on a filtered probability space $(\Omega, \F, (\F_t)_{t \geq 0}, \PP)$.
For each $t \ge 0$ we require that $x \mapsto v(t,x)$ is continuously differentiable on $(-\infty, x_*(t))$ as well as on $(x_*(t),\infty)$, such that all first derivatives appearing in \eqref{eq:fbp} can be understood in the classical sense. The second derivative should be considered a weak derivative and a suitable function space for $v$ as well as the precise notion of `solution' to \eqref{eq:fbp} will be defined below.\\ 

%%%%%%%%%%%%%%%%%%%%%%%%%%%%%%%

%%%%%%%%%%%%%%%%%%%%%%%%%%%%%%%%%

% The equation \eqref{eq:fbp} can be understood as a substantial generalization of the classic Stefan problem for a two-phase medium \eqref{eq:StefanProb}, in the following way:
% \begin{itemize}
% \item \eqref{eq:fbp} allows for drift $\mu_\pm$ and for noise modulated by $\sigma_\pm$ and with a correlation structure determined by $\zeta$.
% \item \eqref{eq:fbp} allows for non-linear boundary interaction governed by $\rho$
%   % \item \eqref{eq:fbp} allows for Robin-type boundary conditions, generalizing and mixing Dirichlet and Neumann boundary conditions.
% \end{itemize}

We now make precise what we understand by a solution to the stochastic moving boundary problem \eqref{eq:fbp}. 
In general, solutions to the moving boundary problem may be local, i.e. only exist up to a stopping time $\tau$. To formalize this, it will be convenient to work with stochastic intervals. Given two stopping times $\varsigma \le \tau$ the stochastic interval $\llbrak \varsigma,\tau  \rrbrak$ is defined as
\[\llbrak \varsigma,\tau  \rrbrak := \left\{ (\omega,t) \in \Omega \times \R_+: \varsigma(\omega) \le t \le \tau(\omega) \right\}. \]
By using strict inequalities also the open stochastic interval $\rrbrak \varsigma,\tau  \llbrak$ and half-open analogues can be defined. As usual in a probabilistic setting we will soon `drop the omega' and say e.g. that for two stochastic processes $X$ and $Y$ the equality $X_t = Y_t$ holds for all $t \in \llbrak 0, \tau \rrbrak$, when we mean that $X_t( \omega) = Y_t(\omega)$ for $\PP$-almost all $\omega$ and all $t$ such that $(\omega,t)\in \llbrak 0,\tau\rrbrak$, i.\,e. 
\[ \P{X_t = Y_t, \forall\, t\leq \tau} = 1. \]

To formalize the moving frame for the moving boundary problem we define for $x \in \RR$ the function space
\begin{equation}\label{eq:moving_frame}
  \Gamma(x) := \left\{v: \R \to \R: \left.v\right|_{\R\setminus\{x\}} \in H^2(\R\setminus\{x\})\cap H^1_0(\R\setminus\{x\})\right\},
\end{equation}
% \begin{equation}\label{eq:moving_frame}
%   \Gamma(x) := \left\{v: \R \to \R: \left.v\right|_{(-\infty,x)} \in H^2(-\infty,x)\cap H^1_0(-\infty, x), \left.v\right|_{(x,\infty)} \in H^2(x,\infty)\cap H^1_0(x,\infty)\right\},
% \end{equation}
where $H^1_0$ and $H^2$ are the usual Sobolev spaces. Note that due to the Sobolev embeddings, any function $v$ in $\Gamma(x)$ can be identified with an element of $L^2(\R)$. 

Finally, using the notation from \eqref{eq:fbp} we introduce the functions $\bmu: \R^4\rightarrow \R$, $\bsigma: \R^2\rightarrow \R$,
\begin{equation}
  \label{eq:bmubsigma}
  \bmu(x,v,v',v''):=
  \begin{cases}
    \eta_+ v'' + \mu_+(x,v,v'),  & \quad x>0,\\
    \eta_- v'' + \mu_-(x,v,v'), & \quad  x<0,
  \end{cases}
  \quad
  \bsigma(x,v) := 
  \begin{cases}
    \sigma_+(x,v), & \quad  x>0,\\
    \sigma_-(x,v), & \quad  x<0.
  \end{cases}
\end{equation}

\begin{defn}\label{df:fbp}
  A \emph{local solution} of the stochastic moving boundary problem~\eqref{eq:fbp} on the stochastic interval $\llbrak 0, \tau \llbrak$, with initial data $v_0$ and $x_0$, is a couple $(v,x_*)$ of stochastic processes, where 
  \begin{equation*}
    (v,x_*): \llbrak 0, \tau \llbrak \to \bigcup_{x \in \R} \left( \Gamma(x)\times \{x\} \right) \; \subseteq \;  L^2(\R)\times \R.
  \end{equation*}
  such that $(v,x_*)$ is predictable as an $L^2(\R)\times\R$-valued process, and
  \begin{align*}
    v(t) - v_0 &= \int_0^t \bmu(.-x_*(s), v(s), \ddx v(s), \ddxx v(s))\d s\\
    &\qquad\qquad+ \int_0^t \bsigma(.-x_*(s), v(s))\d\xi_s(.),\\
    \ddt x_*(t)  &=  \varrho\left(\ddx v(t, x_*(t)+), \ddx v(t,x_*(t)-)\right),\\
    x_*(0) &=x_0
  \end{align*}
  holds on $\llbrak 0, \tau \llbrak$. The first equality is an equality in $L^2(\R)$; the first integral is a Bochner integral in $L^2(\R)$, and the second one a stochastic integral in $L^2(\R)$.\\
  The solution is called global, if $\tau = \infty$ and the interval $\llbrak 0, \tau \llbrak$ is called maximal if there is no solution of \eqref{eq:fbp} on a larger stochastic interval.
\end{defn}
 
 \subsection{Assumptions and main results}\label{Sub:Assumptions}

We introduce the following assumptions on the coefficients appearing in \eqref{eq:fbp}
\begin{ass}\label{a:mu} The functions $\mu_\pm$ are continuously differentiable and
  \begin{enumerate}[label=(\roman*)]
  \item\label{ai:mugrowths} there exist $a \in L^2(\R_+)$, $b$, $\tilde b \in L^\infty_{loc}(\R^2; \R)$ such that for all $x, y, z\in \R$
    \[ \abs{\mu_\pm (x,y,z)} + \abs{\ddx \mu_\pm (x,y,z)} \leq  a(|x|) + b(y,z)\left(\abs{y} + \abs{z}\right),\]
    and
    \[ \abs{\ddy \mu_\pm(x,y,z)} +  \abs{\ddz\mu_\pm (x,y,z)} \leq \tilde b(y,z), \]
  \item\label{ai:mulip} $\mu_\pm$ and their partial derivatives (in $x$, $y$ and $z$) are locally Lipschitz with Lipschitz constants independent of $x\in \R$.
  \end{enumerate}
\end{ass}

\begin{ass}\label{a:sigma}
  The functions $\sigma_\pm$ are twice continuously differentiable and 
  \begin{enumerate}[label=(\roman*)]
  \item\label{ai:sigmagrowths} For every multi-index $I = (i,j) \in \N^2$ with $\abs{I} \leq 2$ there exist $a_I\in L^2(\R_+)$ and $b_I \in L^\infty_{loc}(\R, \R_+)$ such that
    \[  \abs{\tfrac{\partial^{\abs{I}}}{\partial x^{i}\partial y^{j}} \sigma(x,y)} \leq  \begin{cases}   a_I(|x|) + b_I(y)\abs{y}, & j = 0, \\ b_I (y), & j \neq 0. \end{cases}\]
  \item\label{ai:sigmalip} $\sigma_\pm$ and their partial derivatives (in $x$, $y$ and $z$) are locally Lipschitz with Lipschitz constants independent of $x\in \R$.  
  \item\label{ai:sigmabc} $\sigma_\pm$ satisfy the boundary condition
    \begin{equation}
      \label{eq:sigmaD}
      \sigma_\pm(0,0) = 0. 
    \end{equation}
  \end{enumerate}
\end{ass}

\begin{rmk} Later on, certain Nemytskii operators will be defined through $\mu_\pm$ and $\sigma_\pm$ and the assumptions made above can be traced back to requirements on the regularity of these operators, see Appendix~\ref{A:nem} and \ref{A:noise}. Also note that if $\sigma_+$ or $\sigma_-$ is independent of $x\in \R$, then, in Assumption~\ref{a:sigma}, part~\ref{ai:sigmabc} is a consequence of part~\ref{ai:sigmagrowths}.
\end{rmk}

\begin{ass}\label{a:rho}
  $\varrho:\R^2\to\R$ is locally Lipschitz continuous. More precisely, for all $N\in \N$ there exists an $L_{\varrho,N}$ such that \begin{equation*}
    \abs{\varrho(y) - \varrho(\tilde y)} \leq L_{\varrho,N} \abs{y -\tilde y} \qquad \text{for all $|y|, |\tilde y| \le N$.}
  \end{equation*}
\end{ass}
\begin{ass}
  \label{a:zeta}
  $\zeta(.,y) \in C^3(\R)$ for all $y\in \R$ and $\tfrac{\partial^{i}}{\partial x^i}\zeta(x,.)\in L^2(\R)$ for all $x\in \R$, $i\in\{0,1,2,3\}$. Moreover, 
  \begin{equation}
    \sup_{x\in \R} \norm{\tfrac{\partial^{i}}{\partial x^i} \zeta(x,.)}{L^2(\R)} <\infty,\quad i=0,1,...,3.\label{eq:Azeta}
  \end{equation}
  For the rest of this paper, we use the notation $\zeta^{(i)}:=\tfrac{\partial^{i}}{\partial x^i} \zeta$.
\end{ass}

\begin{example}[Convolution]
  Let $\zeta$ be a convolution kernel, i.\,e. $\zeta(x,y) := \zeta(x-y)$, $x$, $y\in \R$. If $\zeta\in C^{\infty}(\R)\cap H^3(\R)$, where $H^3$ denotes the Sobolev space of order $3$, 
  then Assumption~\ref{a:zeta} is satisfied. In this case, the operator $T_\zeta$ corresponds to spatial convolution with $\zeta$.
\end{example}

\begin{example}[Stochastic Stefan Problem]
  Let $\mu_+ =\mu_- \equiv 0$, $\sigma_+(x,v) = \sigma_-(-x,v) = v$ and $\varrho(x_1,x_2) = \varrho \* (x_2-x_1)$ for some $\varrho\in\R$. Then,~\eqref{eq:fbp} is the two-phase Stefan problem with multiplicative colored noise. With $\eta_2=0$, $\mu_-, \sigma_-\equiv 0$ and $\zeta(x,y):= \zeta(x-z)$ we end up with the one-phase system discussed in~\cite{sowersEtAl}. Even though our assumptions and proofs are formulated for the two-phase case, it is straight-forward to adapt them to a one-phase setting.
\end{example}

\begin{example}[Two-Phase Burger's equation]
  The case 
  \[\mu_+(x,v,v'):=\mu_-(-x,v,v'):= v\*v',\; x\in \R_{\geq 0},\; v,\,v'\in \R,\] 
  yields a stochastic version of a two-phase viscous Burger's equation in one dimension. Obviously, Assumption~\ref{a:mu} on $\mu_{\pm}$ is satisfied.
\end{example}

\begin{example}[Reaction-Diffusion-type drift]
  Set $\mu_\pm(x,v,v') := f_\pm(v)$, for some $f_\pm \in C^1(\R)$ with locally Lipschitz derivative and $f_+(0) = f_-(0) = 0$. Also in this case it is easy to check that Assumption~\ref{a:mu} is satisfied. 
  % Indeed $v\mapsto \sfrac{f_{1\slash 2}^{(i)}(v)}{\abs{v}}$, $i=0$, $1$, is locally bounded on $\R$ because of local Lipschitz continuity.
  % Note that the assumption $f_1(0)=f_2(0) =0$ cannot be dropped at all. As we will see below we will need $f_1\circ v \in L^2(\R_+)$ for arbitrary $v\in L^2(\R_+)$. If now $f_1(0)\neq 0$, then $\abs{f_1\circ v}$ would be bounded away from zero on a set of infinite measure and thus, $f_1\circ v\notin L^2(\R_+)$ for all $v\in L^2(\R_+)$.
\end{example}

Let us also remark here that without substantial change in our proofs the constant Laplacian terms $\eta_\pm \ddxx v$ in \eqref{eq:fbp} can be replaced by space-dependent Laplacians in the divergence form $\ddx \left(\eta_\pm(x - x_*(t)) \cdot \ddx v \right)$ for some scalar functions $\eta_\pm$ that are bounded by strictly positive constants.\\

Our first main result concerns the existence of a maximal local solution to the moving boundary problem \eqref{eq:fbp}.

\begin{thm}[Maximal Local solution]\label{thm:sfbp}
  Let Assumptions~\ref{a:mu},~\ref{a:sigma},~\ref{a:rho}, and~\ref{a:zeta} hold true and let $x_0 \in \R$ and $v_0 \in \Gamma(x_0)$. Then there exists a predictable, strictly positive stopping time $\tau$ and a local solution $(v,x_*)$ of~\eqref{eq:fbp} on the maximal interval $\llbrak 0, \tau \llbrak$ in the sense of Definition~\ref{df:fbp}. 
  For almost every $\omega \in \Omega$ it holds that $v(\omega;.) \in C([0, \tau(\omega)); H^1(\R))$ and $x_*(\omega,.) \in C^1([0,\tau(\omega)); \R)$. Moreover, $(v,x_*)$ is unique among all $H^1\oplus \R$-continuous solutions. 
\end{thm}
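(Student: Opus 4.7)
The first step is to remove the moving boundary by introducing the shifted field $u(t,y):=v(t,y+x_*(t))$, so that the Dirichlet nodes of $u$ sit at $y=0$ for all $t$. Since $\dot x_*$ is continuous (from the Stefan-type ODE in \eqref{eq:fbp}), the change of variables turns the problem into the coupled system on the fixed domain $\R\setminus\{0\}$,
\[ du = \bigl[\bmu(y,u,\partial_y u,\partial_{yy}u) + \dot x_*(t)\,\partial_y u\bigr]\,dt + \bsigma(y,u)\,d\tilde\xi_t^{x_*}(y), \quad u(t,0\pm)=0,\]
coupled with $\dot x_*(t)=\varrho(\partial_y u(t,0+),\partial_y u(t,0-))$, $x_*(0)=x_0$. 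The shifted noise $\tilde\xi_t^{x_*}$ has integral kernel $(y,z)\mapsto \zeta(y+x_*(t),z)$, which by Assumption~\ref{a:zeta} retains the required regularity uniformly in $x_*$. The state variable is $U=(u,x_*)\in\cH:=L^2(\R)\oplus\R$; the linear part has generator $A=\mathrm{diag}(\eta_+A_+,\eta_-A_-,0)$, with $A_\pm$ the Dirichlet Laplacians on $\R_\pm$. This $A$ is sectorial and generates an analytic semigroup on $\cH$, with $D(A)=\Gamma(0)\oplus\R$.

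\textbf{Fixed point in an interpolation space.}
Since the drift involves $\partial_y u$ and the boundary feedback $\varrho$ requires traces of $\partial_y u$ at $0$, I would set up the fixed point argument in the fractional power space $\cH_\alpha:=D((-A)^\alpha)\oplus\R$ with $\alpha\in(\tfrac34,1)$, so that by Sobolev embedding $\cH_\alpha\hookrightarrow H^1(\R\setminus\{0\})\oplus\R$ and the one-sided derivative traces at $0$ are continuous linear functionals of the state. Introduce a smooth cut-off $\chi_N$ in the $\cH_\alpha$-norm and replace $\mu_\pm,\sigma_\pm,\varrho$ and the transport term $\dot x_*\!\cdot\!\partial_y u$ by their $\chi_N$-truncations. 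Using Assumptions~\ref{a:mu}--\ref{a:rho} together with the Nemytskii and noise estimates of Appendix~\ref{A:nem} and Appendix~\ref{A:noise}, the truncated drift is globally Lipschitz from $\cH_\alpha$ into $\cH$ and the truncated diffusion is globally Lipschitz into the Hilbert--Schmidt space $L_2(U,\cH)$. The classical semilinear SPDE existence theory in analytic-semigroup/interpolation form (in the spirit of da Prato--Zabczyk, \cite{dPZinf}) then yields, for every $T<\infty$, a unique mild solution $U^N\in C([0,T];\cH_\alpha)$.

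\textbf{Removing the cut-off, uniqueness, and maximality.}
Define $\tau_N:=\inf\{t\ge 0:\,\|U^N_t\|_{H^1\oplus\R}\ge N\}\wedge T$. On $\llbrak 0,\tau_N\llbrak$ the truncated and full nonlinearities coincide, so the processes $U^N$ are pathwise consistent and patch to a solution on $\llbrak 0,\tau\llbrak$ with $\tau:=\sup_N\tau_N$. Predictability and strict positivity of $\tau$ follow from $v_0\in\Gamma(x_0)\subseteq\cH_\alpha$ and from $\cH_\alpha$-continuity at $t=0$. Pathwise uniqueness within the $C([0,T];H^1\oplus\R)$-class follows by applying the same local Lipschitz estimate to the difference of two candidate solutions, localizing at the first hitting time of level $N$ and concluding by Gronwall; this in turn yields maximality of $\llbrak 0,\tau\llbrak$. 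Inverting the shift via $v(t,x):=u(t,x-x_*(t))$ produces a solution of \eqref{eq:fbp} in the sense of Definition~\ref{df:fbp}, with $v\in C([0,\tau);H^1(\R))$ and $x_*\in C^1([0,\tau);\R)$ since $\dot x_*=\varrho(\cdot,\cdot)$ is continuous along each path.

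\textbf{Main obstacle.} The delicate point is the tight window $\alpha\in(\tfrac34,1)$: one needs $\alpha$ large enough that traces of $\partial_y u$ at $0$ are locally Lipschitz functions of the state, yet small enough that the nonlinear drift and the shift-induced transport term $\dot x_*(t)\,\partial_y u$ are Lipschitz from $\cH_\alpha$ into $\cH$ with constants the smoothing estimate $\|e^{tA}\|_{\cH\to\cH_\alpha}\lesssim t^{-\alpha}$ can absorb in a contraction on small time. Establishing these Lipschitz bounds uniformly on $\cH_\alpha$-bounded sets, and verifying Hilbert--Schmidt integrability of the $x_*$-shifted noise integrand under Assumption~\ref{a:zeta}, is precisely where the interpolation machinery from the appendices is indispensable and where the transformation from moving to fixed boundary must be made rigorous in the stochastic setting.
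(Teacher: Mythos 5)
Your architecture — transform to a fixed boundary, recast as an abstract semilinear evolution equation on a Hilbert space, truncate and run a fixed-point argument using the analytic-semigroup smoothing, then remove the cut-off and pull the solution back — is exactly the one the paper uses. The genuine gap is in the choice of ``home'' space for the fixed-point argument and, specifically, in the regularity you claim for the noise operator.

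You propose running the contraction in $\cH_\alpha = D((-A)^\alpha)$ with $\alpha \in (\tfrac34,1)$ (so that traces of $\partial_y u$ at $0$ exist), and you assert that the truncated diffusion is Lipschitz from $\cH_\alpha$ into $\HS(U,\cH)$. But for the stochastic convolution $t\mapsto\int_0^t S_{t-s}C(X_s)\,dW_s$ to take values in $\cH_\alpha$, the It\^o isometry forces
\[
  \E\left\|\int_0^t S_{t-s}\,\Phi_s\,dW_s\right\|_{\cH_\alpha}^2
  \;\lesssim\; \int_0^t (t-s)^{-2(\alpha-\gamma)}\,\E\|\Phi_s\|_{\HS(U,\cH_\gamma)}^2\,ds,
\]
which is finite only when $\alpha-\gamma<\tfrac12$. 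With $\gamma=0$ and $\alpha>\tfrac34$ the singularity $(t-s)^{-2\alpha}$ is non-integrable, so the convolution does not land back in $\cH_\alpha$ and the contraction never closes. (The deterministic drift term $B:\cH_\alpha\to\cH$ is fine, because the Bochner convolution only needs $\alpha<1$; it is the stochastic half that is broken.) To rescue your scheme you would need $C:\cH_\alpha\to\HS(U,\cH_\gamma)$ with $\gamma>\alpha-\tfrac12>\tfrac14$, i.e.\ you must prove that $\sigma(\cdot,u)\,T_\zeta(\cdot)$ has positive-order fractional regularity compatible with the Dirichlet boundary condition --- and for $\gamma\ge\tfrac14$ the identification of $E_\gamma$ with $H^{2\gamma}$ fails precisely because the boundary condition starts to interfere with interpolation.

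The paper sidesteps this entirely by taking the \emph{full} domain $E_1 = \DA \subset \H^2$ as the solution space, proving $C: E_1 \to \HS(U,E_1)$ directly (this is Lemma~\ref{lem:HSest}, relying on the three-derivative bound for $\zeta$ in Assumption~\ref{a:zeta} and on $\sigma_\pm(0,0)=0$ so that $\sigma(\cdot,u)T_\zeta(\cdot)$ inherits the Dirichlet boundary condition). Then no smoothing at all is needed for the stochastic convolution. The drift is handled by showing $B:E_1\to E_\alpha$ for some small $\alpha<\tfrac14$ (where $E_\alpha=\H^{2\alpha}$ with equivalent norms), and Lemma~\ref{lem:StHa} gives the integrable singularity $t^{-(1-\alpha)}$ in the Bochner convolution. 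This also yields $u\in C([0,\tau);\H^2)$ rather than only $C([0,\tau);H^{2\alpha})$, which matters both for the ``strong solution'' claim (Corollary~\ref{thm:eus}) and for the uniqueness statement among $H^1\oplus\R$-continuous solutions. Minor additional points: the constant Laplacian block has a zero on the diagonal for the $x_*$-component, so you need the $-c\,\mathrm{id}$ shift of \eqref{eq:coefA} to satisfy the resolvent estimate \eqref{eq:resolvent_eq}; and the passage from the moving-boundary to the fixed-boundary formulation requires the It\^o-type chain rule of Theorem~\ref{thm:chain_rule}, not the classical one, which you flag as an obstacle but do not actually address.
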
 
\begin{rmk}
 The continuity statement implies that for all $x\in \R$, $t\mapsto v(t,x)$ is almost surely.
\end{rmk}

Imposing some additional assumptions on $\sigma_\pm$ and $\rho$ the solution becomes global.

\begin{ass}\label{a:mslg}
  The functions $b$ and $\tilde b$ in Assumption~\ref{a:mu} are (globally) bounded, and there exist functions $\sigma^1_\pm \in H^2(\R_+)\cap C^2(\R_{\geq 0})$ and $\sigma^2_\pm \in BUC^2(\R_{\geq 0})$, the space of all functions with bounded uniformly continuous second derivative. such that 
  \[ \sigma_+(x,y) = \sigma_+^1(x) + \sigma_+^2(x)y,\quad \sigma_-(-x,y) = \sigma_-^1(x) + \sigma_-^2(x)y\]
  for all $x \in \R_{\geq 0}$ and $y \in \R$.
\end{ass}

\begin{thm}[Global Solution]\label{thm:globalsol}
  If $\rho$ is bounded and Assumption~\ref{a:mslg} holds in addition to the assumptions of Theorem~\ref{thm:sfbp}, then $\tau = \infty$ almost surely, i.\,e. \eqref{eq:fbp} has a global solution.
\end{thm}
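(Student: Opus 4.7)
The plan is to use the standard non-explosion strategy for SPDE: pass to the fixed-boundary coordinates, localize by a sequence of stopping times controlling the $H^1$-norm, derive an a priori linear growth estimate via It\^o's formula, and invoke Gronwall to show that the localizing times cannot accumulate in finite time.

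First I would move to the fixed-frame formulation used already in the proof of Theorem~\ref{thm:sfbp}: set $u(t,x) := v(t, x + x_*(t))$, so that $u$ lives in the fixed Hilbert space $\Gamma(0)$ and satisfies an SPDE on $\R\setminus\{0\}$ with Dirichlet condition at $0$. The moving-frame change of variables introduces an extra transport term of the form $-\dot{x}_*(t)\,\partial_x u$, where $\dot{x}_*(t) = \varrho(\partial_x u(t,0+),\partial_x u(t,0-))$ is now bounded by a deterministic constant $\|\varrho\|_\infty$ by hypothesis. Because $|x_*(t)-x_0| \le \|\varrho\|_\infty\, t$, the real component of the solution cannot blow up; all that remains is to rule out blow-up of $u$ itself.

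Next I would introduce the localizing sequence
\[
\tau_N := \inf\{t \in \llbrak 0,\tau\llbrak : \|u(t)\|_{H^1(\R\setminus\{0\})} \ge N\}\wedge N,
\]
and show that $\tau_N \uparrow \tau$ and that $\PP(\tau_N < T) \to 0$ for every $T>0$, which forces $\tau=\infty$ a.s. To this end I would apply It\^o's formula to $\|u(t\wedge\tau_N)\|_{H^1}^2$ for the strong-form SPDE satisfied by $u$. The Laplacian $\eta_\pm\partial_{xx}$ contributes a nonpositive dissipative term $-2\eta_\pm\|\partial_{xx} u\|_{L^2}^2 - 2\eta_\pm\|\partial_x u\|_{L^2}^2$ via integration by parts using the Dirichlet condition at $0$; the transport term $-\dot x_*\,\partial_x u$ is controlled by $\|\varrho\|_\infty\|u\|_{H^1}^2$; by Assumption~\ref{a:mslg}, the globally bounded $b,\tilde b$ force the Nemytskii drift from $\mu_\pm$ to obey $\|\bmu(\cdot,u,\partial_x u,0)\|_{L^2} \le C(1+\|u\|_{H^1})$ and likewise for its first spatial derivative; and the affine structure $\sigma_\pm(x,y) = \sigma_\pm^1(x)+\sigma_\pm^2(x)y$ together with $\sigma_\pm^1\in H^2$, $\sigma_\pm^2\in BUC^2$ shows that the Hilbert--Schmidt norm of the diffusion operator into $H^1$ is bounded by $C(1+\|u\|_{H^1})$ (cf.\ the estimates of Appendix~\ref{A:noise}). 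All cross terms arising from derivatives of $\sigma_\pm^2 u$ can be absorbed into either $\|u\|_{H^1}^2$ or the dissipation.

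Combining these estimates yields a bound of the form
\[
\EE\bigl[\|u(t\wedge\tau_N)\|_{H^1}^2\bigr] \le \|v_0\|_{H^1}^2 + C\int_0^t\bigl(1+\EE[\|u(s\wedge\tau_N)\|_{H^1}^2]\bigr)\,ds,
\]
after taking expectations to kill the local martingale (its localizability is ensured by the definition of $\tau_N$). Gronwall's lemma then gives $\EE[\|u(t\wedge\tau_N)\|_{H^1}^2] \le (1+\|v_0\|_{H^1}^2)e^{Ct}$ uniformly in $N$; by Chebyshev, $\PP(\tau_N \le T) \le N^{-2}(1+\|v_0\|_{H^1}^2)e^{CT} \to 0$, so $\tau_N \to \infty$ and hence $\tau=\infty$ a.s.

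The main obstacle I anticipate is the It\^o calculus in $H^1$ for the transformed equation: one must verify that $u$ is sufficiently regular for the formal application of It\^o to $\|\cdot\|_{H^1}^2$ (i.e.\ that $u(\cdot\wedge\tau_N)$ is an $H^2$-valued semimartingale satisfying the strong form of the SPDE), and that the integration-by-parts identities against the moving-frame transport term and the stochastic term genuinely produce the dissipation needed to close the estimate. The affine splitting of $\sigma_\pm$ in Assumption~\ref{a:mslg} is precisely what makes the Hilbert--Schmidt norm of the noise into $H^1$ grow at most linearly in $\|u\|_{H^1}$, so any attempt to relax it would require a separate argument to control higher-order moments.
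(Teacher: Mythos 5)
Your plan diverges from the paper's, and as written it has a real gap: you localize and estimate in $H^1$, but the maximal existence time $\tau$ constructed in Theorem~\ref{thm:EUCloc} is the blow-up time for the $E_1 = \DA$-norm, which is equivalent to the $\H^2$-norm. Concretely, Theorem~\ref{thm:EUCloc} gives $\lim_{t\nearrow\tau}\norm{X(t)}{1}=\infty$ on $\{\tau<\infty\}$ with $\norm{\cdot}{1}\cong\norm{\cdot}{\H^2}$, so your stopping times $\tau_N := \inf\{t:\norm{u(t)}{H^1}\ge N\}\wedge N$ do \emph{not} satisfy $\tau_N\uparrow\tau$: the $H^1$-norm could stay bounded while the $H^2$-norm explodes. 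A Gronwall bound on $\EE\norm{u(t\wedge\tau_N)}{H^1}^2$ therefore does not rule out finite-time blow-up of the solution. To close this you would need a separate parabolic bootstrap showing that an $H^1$ bound on $[0,T]$ propagates to an $H^2$ bound (say via the mild formulation and the smoothing estimate of Lemma~\ref{lem:StHa}); without that extra step the argument is incomplete. There are also secondary issues your own closing paragraph flags but does not resolve: applying It\^o to $\norm{\cdot}{H^1}^2$ and integrating by parts to harvest $-\|\partial_{xx}u\|_{L^2}^2$ dissipation requires more spatial regularity (or a Galerkin/Yosida approximation) and careful treatment of the boundary terms at $0$, and the Hilbert--Schmidt-into-$H^1$ bound on the noise needs its own justification since Appendix~\ref{A:noise} only proves the $\dom(\Delta)\cong H^2$ version.

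For comparison, the paper argues at the level of the abstract evolution equation and stays in $E_1\cong\H^2$ throughout. Lemma~\ref{cor:BLG} checks that boundedness of $\rho$ and Assumption~\ref{a:mslg} upgrade the local growth condition \eqref{eq:BDlg} to the global one \eqref{eq:LG}, i.e.\ $\norm{\cB(u)}{\alpha}+\norm{\cC(u)}{\HS(U,E_1)}\le M(1+\norm{u}{1})$. This is then fed into Corollary~\ref{cor:LGglobal}: the truncated solutions $X^{(N)}$ inherit the same growth bound uniformly in $N$, so Lemma~\ref{lem:EsupX} (a moment estimate obtained from the mild formulation, Lemma~\ref{lem:StHa}, and the extended Gronwall Lemma~\ref{lem:gronwall}) gives $\EE\sup_{t\le T}\norm{X^{(N)}(t)}{1}^{2p}\le K_{p,T}(1+\EE\norm{X_0}{1}^{2p})$ independently of $N$; if $\PP(\tau<\infty)>0$ one deduces $\PP(\tau_N\le T)\ge\epsilon$ for all $N$ and hence $\EE\sup_{t\le T}\norm{X^{(N)}(t)}{1}^{2p}\ge N^{2p}\epsilon$, a contradiction. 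The point is that the paper's Gronwall estimate is already phrased for the $E_1$-norm, using the analytic-semigroup smoothing $\norm{S_t}{E_\alpha\to E_1}\lesssim t^{\alpha-1}e^{-\delta t}$ to absorb the loss of derivatives rather than integration by parts; this is what lets it work directly at $H^2$-regularity, where an It\^o energy estimate would require more regularity than the solution has. If you want to pursue the energy-estimate route you would either have to work in a variational framework or supply the bootstrap step, neither of which the current plan does.
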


In Theorem~\ref{th:tau0eqtau} we provide a refined analysis of the case of finite-time blow-up ($\tau < \infty$). It turns out that under Assumption~\ref{a:mslg}, but with $\rho$ unbounded a finite-time blow-up of the system \eqref{eq:fbp} must coincide with a blow-up of the boundary terms.

%\begin{example}[Reference LOB profile]
%  For the application from finance presented above\marginpar{REF}, a drift of the form
%  \[ \mu_{1\slash 2}(x,v,v') = \lambda( v - \underline{v}_{1\slash 2}(x))\]
%  for reference order book profiles $\underline{v}_{1\slash 2}\in C^2(\R_+)\cap H^2(\R_+)$ could be of interest. Here, the centering w.\,r.\,t $x_*$ in~\eqref{eq:fbp} becomes crutial for the interpretation.
%\end{example}

% Writing $\mu := \mu_+$, $\tilde\mu(x,y,z) := -\mu_-(-x,-y,z)$, $\sigma:= \sigma_+$ and $\tilde \sigma:= -\sigma_-(-x,-y)$
% In the weak formulation, we directly used that $A$ is self-adjoint. Having in mind that the change of coordinates was just formal, we want to get an It\^o type formula to transform $X$ back to the SPDE. it seems to be useful to ask for strong solutions, that is, saying that $X$ solves the integral equation corresponding to~\eqref{eq:SEEq}.
%% TODO: Use Prop 4.15 dPZ and the same for the Bochner integral to show that every domain-valued L^2 solution is also a D(A)-solution

\subsection{Overview of the proof}
Our treatment of equation \eqref{eq:fbp} consists of three steps
\begin{itemize}
\item Transformation into an equation with fixed boundary;
\item Formulation of the transformed equation as an abstract stochastic evolution equation;
\item Solving the abstract evolution equation by a fixed-point-argument.
\end{itemize}

For the first step we apply a change of coordinates 
\begin{equation}
  u_1(t,x) := v(t,x_*(t) + x),\quad\text{and}\quad u_2(t,x) := v(t,x_*(t) -x), \label{eq:trafo}
\end{equation}
i.e. new coordinates are defined relative to the free boundary $x_*(t)$, which yields
% Moreover, the change of signs turns both $u_1(t,.)$ and $u_2(t,.)$ into non-negative functions on the positive half-line $(0,\infty)$. A formal computation then gives the following SPDE for $u_1$ and $u_2$
\begin{gather}
  \begin{split}    
    \d u_1(t, x)& = \left[\eta_1 \ddxx u_1+ \mu_+\left(x, u_1, \ddx u_1\right) \right. \\
    &\;\left. \vphantom{\ddxx}+ \frac{dx_*}{dt}(t) \cdot \left(\ddx u_1\right)\right] \d t  + \sigma_+\left(x, u_1\right)\d \xi_t (x_*(t) + x),\\
    \d u_2(t, x) &= \left[\eta_2 \ddxx u_2 + \mu_-\left(-x, u_2,\ddx  u_2 \right)\right.\\
    & \;\left. \vphantom{\ddxx} - \frac{dx_*}{dt}(t) \cdot \left(\ddx u_2\right)\right] \d t +  \sigma_-\left(-x, u_2\right)\d \xi_t (x_*(t) - x),\\
    \d x_*(t) &= \varrho \left( \ddx u_1(t,0+), \ddx u_2(t,0+)\right)\d t,
  \end{split}
  % \tag{$\text{SPDE}$}
  \label{eq:fbpt}
\end{gather}
for $t\geq 0$ and $x> 0$ with Dirichlet boundary conditions,
% \begin{gather}
%   \begin{split}
%     \alpha_1 u_1(t,0+) - \beta_1\ddx u_1(t,0+) &= 0\\
%     \alpha_2 u_2(t,0+) - \beta_2 \ddx u_2(t,0+) &= 0.
%   \end{split}\label{eq:bcrobin}\tag{$\widetilde{\text{RBC}}$}
% \end{gather}
\begin{equation}
  \label{eq:bcD}\tag{D}
  u_1(t,0) = 0,\qquad u_2(t,0) = 0.
\end{equation}
% \begin{align}
%   u_1(t,0+) &= 0, \label{eq:bcrobin1}\tag{${\text{D}}$}\\
%   u_2(t,0+) &= 0 \label{eq:bcrobin2}\tag{${\text{D}}_2$}.
% \end{align}
Note that the classic chain rule is not sufficient to derive \eqref{eq:fbpt} from \eqref{eq:fbp}, since $v$ is not differentiable in time. Rather a special case of Ito's formula (a `stochastic chain rule') is needed to justify the computation. The transformation turns the moving boundary into a fixed boundary at $x=0$, but introduces an additional non-linear and unbounded drift term involving the spatial derivatives $\ddx u_1$ and $\ddx u_2$.\\

The second step is the abstract formulation of \eqref{eq:fbpt} in terms of the stochastic evolution equation 
\begin{equation}
  \d X(t) = \left[ \cA X(t) + \cB(X(t))\right] \d t + \cC(X(t)) \d W_t,\qquad X(0) = X_0. \label{eq:SEEq}
\end{equation}
where
\[ X(t) := (u_1(t,.), u_2(t,.), x_*(t)),\;t\geq 0\]
and $W$ is a cylindrical Wiener process with covariance operator $\Id$ on the separable Hilbert space $U=L^2(\R)$. Introducing the shorthand
\begin{equation}\label{eq:I}
\cI(u) = \left(\ddx u_1(0+), \ddx u_2(0+)\right)
\end{equation}
for the boundary terms, the coefficients of \eqref{eq:SEEq} are given by
\begin{align}
  \cA =& \begin{pmatrix}
    \eta_+ \Delta & 0 & 0 \\ 
    0 & \eta_- \Delta  & 0 \\
    0 & 0 &0
  \end{pmatrix} - c\; \textrm{id}, \label{eq:coefA}\\
  \cB(u)(x) =& \begin{pmatrix}
    \mu_+(x, u_1(x), \ddx u_1(x))  + \ddx u_1(x) \* \varrho \left(\cI(u(t))\right) \\
    \mu_-(-x, u_2(x), \ddx u_2(x)) - \ddx u_2(x) \* \varrho \left(\cI(u(t))\right) \\
    \varrho \left(\cI(u(t))\right) 
  \end{pmatrix} + c\; \textrm{id},\label{eq:coefB}\\
  \cC(u)(w)(x) =& \begin{pmatrix}
    \sigma_+(x, u_1(x)) T_\zeta w (u_3+x)  \\
    \sigma_-(-x, u_2(x)) T_\zeta w(u_3-x)\\
    0
  \end{pmatrix},\quad u\in \DA,\,w\in U,\,x\geq 0.\label{eq:coefC}
\end{align} 
Here, $\Delta$ is the Laplacian on $\R_+$ with Dirichlet boundary conditions and $c > 0$ is an arbitrary constant, whose sole function is to move the spectrum of $\cA$ into the negative half-line $(-\infty,0)$.\\

Finally, the solution of \eqref{eq:fbp} in Theorem~\ref{thm:sfbp} will be obtained from the unique strong solution $X$ on $\llbrak 0, \tau \llbrak$ of the stochastic evolution equation~\eqref{eq:SEEq} with initial data $X_0:= (v_0(.+x_0)\vert_{\R_+}, v_0(x_0-.)\vert_{\R_+}, x_0)$ by setting $X = (u_1,u_2,x_*)$ and 
\[ v(t,x) :=
\begin{cases}
  u_1(t, x-x_*(t)), & x>x_*(t), \\
  u_2(t, x_*(t)-x), & x<x_*(t),
\end{cases}
\]
In the remainder of the paper we will make the above steps rigorous, by traversing them in the reverse direction:
\begin{itemize}
\item In section \ref{sec:seeqs} we show that under certain assumptions the abstract stochastic evolution equation \eqref{eq:SEEq} has a unique strong solution.
\item In section \ref{sec:proofs} we show that the parameter assumptions made in Section~\ref{Sub:Assumptions} are sufficient for the assumptions of section~\ref{sec:seeqs}
\item In section \ref{sec:trafo} we show the stochastic chain rule that is necessary to make the transformation to fixed boundary rigorous and collect all pieces to complete the proof of our main results.
\end{itemize}
\begin{rmk}\label{rmk:generalbc}
Considering carefully our proof of the existence result in the next section it can be seen that equation~\eqref{eq:SEEq} can be solved also for homogeneous Neumann or even Robin boundary conditions. Of course the boundary conditions on $\sigma_\pm$ in Assumption~\ref{a:sigma}~\ref{ai:sigmabc} have to be adapted accordingly. 
%    \begin{alignat}{2}
%%      \sigma_\pm(0,0) &= 0, & \text{if}\quad \beta_\pm &=0,\tag{D}\label{eq:sigmaD}\\ 
%      \sigma_\pm(0,y) &=\ddx\sigma_\pm(0,y) =0,\quad\forall\,y\in \R,&, \label{eq:sigmaN}\\
%      \intertext{for Neumann. For Robin we need} \sigma_\pm(0,y) &= \ddx\sigma_\pm(0,y) = \ddy \sigma_\pm(0,y) = 0,\quad\forall\,y\in \R. \label{eq:sigmaR}
%    \end{alignat}
  The main difference to the case of Dirichlet boundary conditions, is that a discontinuity at the boundary introduces a jump to the dynamics of $v$ in equation \eqref{eq:fbp} at any given point $x \in \RR$, every time the boundary $x_*(t)$ crosses $x$. In particular the `stochastic chain rule' developed in Section~\ref{sec:trafo} is no longer sufficient to pass from the moving boundary equation \eqref{eq:fbp} to the fixed boundary equation \eqref{eq:fbpt} and back. 
\end{rmk}
\begin{rmk}
  The existence result for the centered equations~\eqref{eq:SEEq} can be extended to the case with Brownian noise in the boundary, without any problems. That is,
\[\d x_*(t) =  D(u(t))\d t +\sigma_* \d B_t\]
where $D$ is any locally Lipschitz operator from $\DA$ into $\R$, and $\sigma_*>0$. Here, $B$ can be either independent of $W$, or a Hilbert-Schmidt transformation of $W$ into $\R$. 
\end{rmk}

%%% Local Variables: 
%%% mode: latex
%%% TeX-master: "0paper"
%%% End: 

\section{Solving a stochastic evolution equation}
\label{sec:seeqs}
%% Theory on the truncated evolution equation
%% Existence, Uniqueness and Continuity 
%% Stochastic Evolution Equations ... Bounded setting
\subsection{Preliminaries}
In this section we concentrate on the evolution equation \eqref{eq:SEEq}, i.e. 
\begin{equation}
  \d X(t) = [AX(t) + B(X(t))] \d t  + C(X(t)) \d W_t,\quad t\geq 0,\label{eq:seebd}
\end{equation}
where $W$ is a cylindrical Wiener process with covariance operator $\Id$ on a separable Hilbert space $U$. At this point it is sufficient to assume that $X$ takes values in an arbitrary separable Hilbert space $E$ with norm $\norm{.}{}$. On the coefficients $A,B,C$ we will impose assumptions that are milder (but also more abstract) than the assumptions made in section~\ref{sec:sfbp} on the coefficients of the free boundary problem. As will be shown in section~\ref{sec:proofs} the assumptions below are implied by the assumptions from section~\ref{Sub:Assumptions} such that eventually the results on the evolution equation \eqref{eq:SEEq} can be used to solve the free boundary problem~\eqref{eq:fbp}. Nevertheless, the results of this section may be of independent interest when generalizations of \eqref{eq:fbp} are considered.\\

%In particular it is not necessary to assume that $A$ is the diagonal Laplace operator introduced in \eqref{eq:coefA}.  
 On the operator $A$ in \eqref{eq:seebd} we make the following assumption.
\begin{ass}
  \label{a:A}
  $A$ is a densely defined and sectorial operator with domain $\dom(A)\subset E$. Moreover, the resolvent set of $A$ contains $[0,\infty)$ and there exists a $M > 0$ such that the resolvent $R(\lambda,A)$ satisfies 
  \begin{equation}\label{eq:resolvent_eq}
    \norm{R(\lambda,A)}{} \le \frac{M}{1 + \lambda}, \qquad \text{for all $\lambda > 0$.}
  \end{equation}
\end{ass}
\begin{rmk}
  This assumption is equivalent to each of the following statements
  \begin{itemize}
  \item  Equation \eqref{eq:resolvent_eq} holds and the resolvent set of $A$ contains $0$ and a sector
    \[\{\lambda \in \C: |\arg \lambda | < \theta\}\]
    for some $\theta \in (\pi/2,\pi)$. 
  \item The operator $A$ is sectorial and $-A$ is positive in the sense of \cite{lunardiInterpol}.
  \end{itemize}
  Assumption \eqref{a:A} ensures that $A$ generates an analytic semigroup $(S_t)_{t \geq 0}$ and that suitable interpolation spaces between $E$ and $\dom(A)$ can be defined through fractional powers of $-A$. They also imply that the semigroup $S_t$ is of strictly negative type, i.e. there exist $\delta$, $M > 0$ such that $\norm{S_t}{} \le M e^{-\delta t}$. Note, that if $M=1$ then $S_t$ is a contraction semigroup, which we shall not assume a priori.
\end{rmk}

Using the semigroup $S_t$ that is generated by $A$ we can introduce the important concept of mild solutions.

\begin{defn}
  Let $X= (X(t))$ be a $\dom(A)$-valued predictable process and $\tau$ be a predictable stopping time. 
  \begin{itemize}
  \item $X$ is called global mild solution to the stochastic evolution equation~\eqref{eq:seebd} on $\dom(A)$ with initial data $X_0\in \dom(A)$, if  \begin{equation}
      X(t) = S_tX_0 + \int_0^t S_{t-s} B(X(s)) \d s + \int_0^t S_{t-s}C(X(s)) \d W_s.\label{eq:seebdm}
    \end{equation}
    holds for all $t \geq 0$, $\PP$-a.s.
  \item $X$ is called mild solution on $\llbrak 0, \tau \llbrak$, if~\eqref{eq:seebd} holds on the stochastic interval $\llbrak 0, \tau \llbrak$.
  \item The stochastic interval $\llbrak 0, \tau \llbrak$ is called maximal for $X$ if there is no $\dom(A)$-continuous extension of $X$ to a larger stochastic interval.
  \end{itemize}
  In the last two terms of \eqref{eq:seebdm}, $\int$ denotes the Bochner and stochastic integral on the Hilbert space $\dom(A)$, respectively. If we want to emphasize the underlying space $\dom(A)$ we write global mild $\dom(A)$-solution and mild $\dom(A)$-solution respectively.
\end{defn}

% \begin{thm}
%   \label{thm:eum} For any $X_0\in L^{2p}(\Omega; \DA)$, some $p\geq 1$, there exists an \emph{unique maximal mild solution}
%   $(X, \tau)$ of~\eqref{eq:SEEq} on $\DA$. That is a predictable
%   stopping time $\tau$ and an $\DA$-valued predictable stochastic
%   process $X$ satisfying
%   \begin{align} 
%     X(\varsigma) = S_{\varsigma} X_0 + \int_0^{\varsigma}S_{{\varsigma}-s} B(X(s))\d s + \int_0^{\varsigma} S_{{\varsigma}-s}C(X(s)) \d W_s,\label{eq:milds}
%   \end{align} 
%   for all stopping times ${\varsigma}$ with ${\varsigma}< \tau$ and such that for all local mild solutions $(\tilde X, \tilde\tau)$ holds $\tilde \tau \leq \tau$ and $\tilde X({\varsigma}) =  X({\varsigma})$ for all stopping times ${\varsigma}$ with ${\varsigma}<\tilde\tau\wedge \tau$, where all the (in-)equalities hold $\PP$-almost surely. Moreover, $X$ has a $\DA$-continuous modification, $\tau > 0$ almost surely and $\lim_{t\nearrow \tau}\norm{X(t)}{A} = \infty$ on $\{\tau <\infty\}$.
% \end{thm}
% The integrals in~\eqref{eq:milds} are Bochner integrals on the Banach space $\DA$. Clearly, switching to $\L^2$ valued integrals the equation still holds. Considering~\eqref{eq:SEEq} on $\L^2$ we get even more.
Finally we will be able to show that the mild solution is also a strong one in the following sense:

% \begin{defn}\label{df:strongsolution}
%   Given $\DA$-valued initial data $X_0$ and a predictable stopping time $\tau$, $X$ is called strong solution of~\eqref{eq:SEEq} on $\llbrak 0, \tau \llbrak$, if $X$ is a $\DA$-valued predictable process and  
%   \begin{equation}
%     X(t) = X_0 + \int_0^{t }A X(s) + B(X(s))\d s + \int_0^{t} C(X(s)) \d W_s, 
%   \end{equation}
%   holds on $\llbrak 0, \tau \llbrak$ a.s. Global solutions and maximality are defined in the same way as for mild solutions.
% \end{defn}
\begin{defn}\label{df:strongsolution}
  Given $\DA$-valued initial data $X_0$ and a predictable stopping time $\tau$, $X$ is called strong solution of~\eqref{eq:SEEq} on $\llbrak 0, \tau \llbrak$, if $X$ is a $\DA$-valued predictable process and  
  \begin{equation}
    X(t) = X_0 + \int_0^{t }A X(s) + B(X(s))\d s + \int_0^{t} C(X(s)) \d W_s, 
  \end{equation}
  holds on $\llbrak 0, \tau \llbrak$. Global solutions and maximality are defined in the same way as for mild solutions.
\end{defn}
%\begin{rmk}
%  If we talk about strong solutions, we typically consider~\eqref{eq:SEEq} as an equation on the space $E$, i.\,e. the integrals in the definition are $E$-valued. However, strong solutions are required to be domain valued processes.
%\end{rmk}

%\begin{rem}
%  Considering the stochastic evolution equation on $\L^2$ we need to make sense of the drift term $\cB$ in a reasonable way. At this point note that $\DA$ is a Borel subset of $\L^2$ and we can extend $\cB$ to $\L^2$ trivially. However, in this context we lose any kind of regularity of $\cB$ and thus, the uniqueness. 
%\end{rem}

%\begin{ass}\label{a:St}
%  $S_t$ is a contraction semigroup, that is
%  \begin{equation}
%    \label{eq:24}
%    \norm{S_t x}{} \leq \norm{x}{}
%  \end{equation}
%\end{ass}

\subsection{Interpolation spaces}
  By taking fractional powers of $-A$ we introduce inter- and extrapolation spaces for $E$. For $\alpha > 0$ we define
  \begin{equation}
    \label{eq:25}
    E_\alpha := \dom((-A)^\alpha),\qquad \norm{h}{\alpha} := \norm{(-A)^\alpha h}{E}, \; h\in E_\alpha.
  \end{equation}
  It is known that also $E_\alpha$ with the induced scalar product is a separable Hilbert space. In particular, $\norm{.}{1}$ is equivalent to the graph norm of $A$ and the following continuous embedding relations hold for $\alpha \in [0,1]$:
  \begin{equation}
    \label{eq:15}
    \dom(A) = E_1 \hookrightarrow E_\alpha \hookrightarrow E_0 = E, 
  \end{equation}

  Note that the restriction of $A$ to any $E_\alpha, \alpha \in [0,1]$ is again a densely defined and closed operator on $E_\alpha$. Moreover, it is the infinitesimal generator of the restriction of $S_t$ to $E_\alpha$, which is again an analytic (contraction) semigroup; see e.g. \cite[Ch.~II.5]{Engel}.

The following regularity property of $S_t$ between different interpolation spaces $E_\alpha$, $\alpha \in [0,1]$ will be crucial in the proofs that follow. We derive it from results in~\cite{lunardiInterpol} on interpolation spaces.
\begin{lem}\label{lem:StHa}
  Let $\beta \geq 0$ and $\alpha > \beta$. Then, for all $t>0$ and $h\in E_{\beta}$,
  \[ \norm{S_t h}{\alpha} \leq K_{\alpha,\beta} \frac{1}{t^{\alpha - \beta}} e^{-\delta t}\norm{h}{\beta}. \]
\end{lem}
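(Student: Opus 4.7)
The plan is to combine the standard analytic-semigroup smoothing estimate with the commutativity of fractional powers and the semigroup, exploiting the strictly negative type of $S_t$ guaranteed by Assumption~\ref{a:A}.

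First I would recall the two ingredients. On the one hand, because $-A$ is positive in the sense of \cite{lunardiInterpol} and $A$ generates an analytic semigroup, one has the well-known bound
\[
\norm{(-A)^\gamma S_t h}{E} \le C_\gamma\, t^{-\gamma}\, \norm{h}{E}, \qquad \gamma \ge 0,\ t > 0,\ h \in E,
\]
(see e.g.\ \cite{lunardiInterpol}, or equivalently the standard smoothing estimate for analytic semigroups). On the other hand, the remark following Assumption~\ref{a:A} gives $\norm{S_t}{} \le M e^{-\delta t}$, so the semigroup is of strictly negative type with some $\delta > 0$.

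Next I would combine the two estimates. For $t \ge 1$, I would use the semigroup property $S_t = S_{t/2} S_{t/2}$, apply the smoothing bound to the first factor (which is harmless since $t/2 \ge 1/2$) and the exponential decay to the second factor, obtaining an estimate of the form $C_\gamma' e^{-\delta t/2}$, which is dominated by $C_\gamma'' t^{-\gamma} e^{-\delta' t}$ for a slightly smaller $\delta' \in (0,\delta)$. For $t \in (0,1]$ the polynomial factor $t^{-\gamma}$ is the binding one and the exponential factor is bounded by $e^{\delta}$, so the smoothing bound already suffices. Relabeling $\delta'$ as $\delta$ and absorbing constants yields
\[
\norm{(-A)^\gamma S_t h}{E} \le K_\gamma\, t^{-\gamma}\, e^{-\delta t}\, \norm{h}{E}, \qquad \gamma \ge 0,\ t > 0.
\]

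Finally I would transfer this bound to the interpolation spaces. For $h \in E_\beta$ the fractional power $(-A)^\beta h$ is well defined in $E$ and $\norm{h}{\beta} = \norm{(-A)^\beta h}{E}$. Since $(-A)^\beta$ commutes with $S_t$ on $E_\beta$ and $(-A)^\alpha = (-A)^{\alpha - \beta} (-A)^\beta$ on $E_\alpha$, I compute
\[
\norm{S_t h}{\alpha} = \norm{(-A)^\alpha S_t h}{E} = \norm{(-A)^{\alpha-\beta} S_t (-A)^\beta h}{E} \le K_{\alpha-\beta}\, t^{-(\alpha-\beta)}\, e^{-\delta t}\, \norm{h}{\beta},
\]
which is the claim with $K_{\alpha,\beta} := K_{\alpha - \beta}$.

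The main (minor) obstacle is merely the bookkeeping of splitting the time interval at $t=1$ so that both the smoothing factor $t^{-(\alpha-\beta)}$ and the exponential decay $e^{-\delta t}$ appear simultaneously; everything else is a direct application of classical results on fractional powers of sectorial operators and their commutation with the semigroup.
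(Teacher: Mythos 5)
Your proof is correct, but it takes a genuinely different route from the paper's. The paper only invokes the \emph{integer}-power smoothing estimate $\norm{(-A)^n S_t(-A)^\beta h}{0} \le K_n t^{-n}e^{-\delta t}\norm{h}{\beta}$ (citing Pazy), and then bridges the gap to non-integer $\alpha$ by interpolation: it uses the continuous embedding of the real interpolation space $(E_0,E_1)_{\theta,1}$ into $\dom((-A)^\theta)$ together with the interpolation inequality $\norm{h'}{\theta}\le K\norm{h'}{0}^{1-\theta}\norm{h'}{1}^{\theta}$, applied to $h'=(-A)^nS_t(-A)^\beta h$, then invokes the integer estimate for $n$ and $n+1$. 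You instead cite the fractional-power smoothing estimate $\norm{(-A)^\gamma S_t}{}\lesssim t^{-\gamma}$ directly for all $\gamma\ge 0$, splice in the exponential decay by splitting at $t=1$, and then factor $(-A)^\alpha=(-A)^{\alpha-\beta}(-A)^\beta$ and commute $(-A)^\beta$ through $S_t$. Your route is shorter and appeals to a stronger off-the-shelf result (indeed the standard reference already bundles the $e^{-\delta t}$ factor, so your time-splitting step, while harmless, is not even strictly necessary); the paper's route is more self-contained, deriving the fractional estimate from the integer case and showcasing the interpolation machinery that is used repeatedly elsewhere in the paper. Both proofs rely on the same final step of commuting $(-A)^\beta$ with the semigroup.
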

Note that the factor in front of $\norm{h}{\beta}$ is integrable at time $t = 0$, which is the key property used in the estimates concerning the mild formulation of~\eqref{eq:seebd} on $E_1$.
% \begin{proof}
% Without loss of generality let $\alpha \in (\beta, \beta + 1)$. Set $\theta = \alpha - \beta$ and note that $\theta \in (0,1)$. By \cite[Prop.4.1.7]{Lundardi} the real interpolation space $(E_0,E_1)_{\theta, 1}$ is continuously embedded into $\dom((-A)^\theta)$. Combining this fact with \cite[Cor.1.1.7]{Lunardi} we obtain that there exists $K > 0$ such that 
% \[\norm{h'}{\theta} \le C \norm{h'}{0}^{1 - \theta} \norm{h'}{1}^{\theta} \quad \text{for all $h' \in E_1 = \dom(A)$}.\]
% Now let $h \in \dom((-A)^\beta)$ and set $h' = S_t (-A)^\beta h$. Applying the above inequality and using boundedness of the semigroup $S_t$ we obtain
% \[\norm{S_t h}{\alpha}  = \norm{h'}{\theta} \le C' \norm{(-A)^\beta h}{0}^{1 - \theta} \norm{S_t (-A)^\beta  h}{1}^\theta.\]
% By \cite[Thm~1.5.2d]{Pazy}, analyticity of the semigroup $S_t$ implies the existence of $C''$ such that $\norm{A S_t h''}{0} \le \frac{C''}{t} \norm{h''}{0}$ for all $h'' \in E_0$. Setting $h'' = (-A)^\beta h$ and inserting into the above inequality we get
% \[\norm{S_t h}{\alpha}  \le C' \norm{(-A)^\beta h}{0}^{1 - \theta} \left(\frac{C''}{t} \norm{(-A)^\beta  h}{0}\right)^\theta = K_{\alpha, \beta} \frac{1}{t^{\alpha - \beta}}\norm{h}{\beta},\]
% proving the result.
% \end{proof}
\begin{proof}
  Suppose first that $\alpha = \beta + n$ for some $n\in \N$, then we get from~\cite[Thm~1.5.2d and p. 70]{pazySemigroups} that there exists $K_n>0$ such that 
  \begin{equation}
    \label{eq:abn}
    \norm{S_t h}{\alpha} = \norm{(-A)^nS_t (-A)^\beta h}{0} \leq K_n t^{-n} e^{-\delta t} \norm{h}{\beta}.  
  \end{equation}
  Now assume that $\alpha \in (\beta + n, \beta + n +1)$ for some $n\in \N_0$ and set $\theta = \alpha - \beta - n\in (0,1)$. By \cite[Prop. 4.7]{lunardiInterpol} the real interpolation space $(E_0,E_1)_{\theta, 1}$ is continuously embedded into $\dom((-A)^\theta)$. Combining this fact with \cite[Cor. 1.7]{lunardiInterpol} we obtain that there exists $K > 0$ such that 
  \[\norm{h'}{\theta} \le K \norm{h'}{0}^{1 - \theta} \norm{h'}{1}^{\theta} \quad \text{for all $h' \in E_1 = \dom(A)$}.\]
  Now let $h \in \dom((-A)^\beta)$ and set $h' = (-A)^n S_t (-A)^\beta h \in \dom(A)$. Applying the above inequality and using boundedness of the semigroup $S_t$ we obtain
  \[\norm{S_t h}{\alpha}  = \norm{h'}{\theta} \le K' \norm{(-A)^nS_t (-A)^\beta h}{0}^{1 - \theta} \norm{(-A)^nS_t (-A)^\beta  h}{1}^\theta.\]
  Finally,~\eqref{eq:abn} for $n$ and $n+1$ yields
  \[ \norm{S_t h}{\alpha} \le K'' e^{-\delta t} t^{-(1-\theta)n - \theta (n+1)} \norm{h}{\beta},\]
  proving the result. %% \theta + n = \alpha - \beta 
\end{proof}

To deal with the singularity in $0$ on the right hand side above, we will use an extended version of Gronwall's lemma, see~\cite[Lem 7.0.3]{lunardiAnalytic} or, for a proof,~\cite[p. 188]{henryGeo}.
\begin{lem}[Extended Gronwall's lemma]\label{lem:gronwall}
  Let $\alpha>0$, $a$, $b\geq 0$, $T \geq 0$, and $u:[0,T]\to\R$ be non-negative and integrable. If, for all $t\in [0,T]$,
  \begin{equation}
    \label{eq:6}
    u(t) \leq a + b\int_0^t u(s) (t-s)^{\alpha-1} \d s,
  \end{equation}
  then exists a constant $K_{\alpha, b,T}$, depending only on $\alpha$, $b$ and $T$, such that,
  \begin{equation}
    \label{eq:18}
    u(t) \leq a K_{\alpha,b,T},\qquad t\in [0,T].
  \end{equation}
\end{lem}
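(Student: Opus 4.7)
The plan is to iterate the hypothesis using the monotone linear operator
\[ Tf(t) := b\int_0^t (t-s)^{\alpha-1}f(s)\d s, \]
so that the assumption reads $u \le a + Tu$ on $[0,T]$. Applying $T$ repeatedly (which preserves inequalities between non-negative functions) and substituting back, one obtains the identity
\[ u(t) \le a\sum_{k=0}^{n-1} T^k \1(t) + T^n u(t), \qquad t\in [0,T],\; n\in \N, \]
where $\1$ denotes the constant function $1$. The whole argument reduces to computing $T^k\1$ explicitly and showing that the remainder $T^n u$ vanishes as $n\to\infty$.

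For the first task, an induction using the Beta-function identity
\[ \int_s^t (t-r)^{\alpha-1}(r-s)^{\beta-1}\d r \;=\; \frac{\Gamma(\alpha)\Gamma(\beta)}{\Gamma(\alpha+\beta)}(t-s)^{\alpha+\beta-1} \]
gives
\[ T^n f(t) \;=\; \frac{b^n \Gamma(\alpha)^n}{\Gamma(n\alpha)}\int_0^t (t-s)^{n\alpha-1} f(s)\d s, \]
and in particular $T^n\1(t) = (b\Gamma(\alpha))^n t^{n\alpha}/\Gamma(n\alpha+1)$. Summing this series yields the Mittag-Leffler bound
\[ \sum_{k=0}^{\infty} T^k\1(t) \;=\; E_\alpha\!\left(b\Gamma(\alpha)\,t^\alpha\right) \;\le\; E_\alpha\!\left(b\Gamma(\alpha) T^\alpha\right), \]
which is finite and depends only on $\alpha$, $b$, $T$; this supplies the constant $K_{\alpha,b,T}$.

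The main obstacle is the remainder estimate, since we only assume $u\in L^1([0,T])$ rather than boundedness. For $n$ large enough that $n\alpha \ge 1$, the factor $(t-s)^{n\alpha-1}$ is bounded by $T^{n\alpha-1}$ on $[0,T]$, hence
\[ T^n u(t) \;\le\; \frac{b^n\Gamma(\alpha)^n T^{n\alpha-1}}{\Gamma(n\alpha)}\,\norm{u}{L^1([0,T])}. \]
By Stirling's formula $\Gamma(n\alpha)$ grows faster than any exponential in $n$, so this upper bound tends to $0$ uniformly in $t\in [0,T]$. Passing to the limit in the iterated inequality yields $u(t)\le a\,K_{\alpha,b,T}$ for all $t\in [0,T]$, which is the assertion. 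The non-trivial point is exactly that integrability of $u$ suffices here, because the singular kernel becomes smoother (and eventually $L^\infty$) upon iteration while the combinatorial factor $1/\Gamma(n\alpha)$ kills the growth.
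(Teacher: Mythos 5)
Your proof is correct and is essentially the standard Picard-iteration/Mittag--Leffler argument that the paper itself defers to (it cites Henry, p.~188, and Lunardi rather than proving the lemma in-text), so there is nothing different to compare. The key steps — the induction via the Beta-function identity giving $T^n f(t)=\tfrac{b^n\Gamma(\alpha)^n}{\Gamma(n\alpha)}\int_0^t(t-s)^{n\alpha-1}f(s)\,\d s$, the Mittag--Leffler bound on $\sum T^k\1$, and the observation that once $n\alpha\ge 1$ the kernel is bounded so $T^nu\to 0$ uniformly using only $u\in L^1$ — are exactly the ones in the referenced proof and are all carried out correctly.
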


\subsection{Existence of global mild solutions}
We start by discussing global solutions. Subsequently, the existence of local solutions under milder assumptions will be shown by localizing with appropriate stopping times. 
Denoting by $\HS(U,E_1)$ the (Hilbert) space of Hilbert-Schmidt operators from $U$ to $E_1$ we introduce the following Lipschitz-type assumption, which will imply the existence of global mild solutions to \eqref{eq:seebd}.
\begin{ass}
  \label{a:SEEglobal}
  There exists $\alpha \in (0,1]$ such that $B: E_1 \rightarrow E_\alpha$ and $C:E_1\to \HS(U,E_1)$ are Lipschitz continuous, i.e. there exists a constant $\hat L$ such that 
  \begin{equation}
    \label{eq:16}
    \norm{B(Y) - B(Z)}{\alpha} + \norm{C(Y)-C(Z)}{\HS(U, E_1)} \leq \hat L\norm{Y- Z}{1},
  \end{equation}  
  holds for all $Y$, $Z\in E_1$.
\end{ass}
\begin{rmk}
  Assumption~\ref{a:SEEglobal} implies a linear growth bound on $B$ and $C$ in the sense that 
  \begin{equation}
    \label{eq:LG} %\label{eq:17}
    \norm{B(Y)}{\alpha} + \norm{C(Y)}{\HS(U, E_1)} \leq \hat M (1+\norm{Y}{1}).
  \end{equation}
  for all $Y\in E_1$.
\end{rmk}

%%% 
\begin{thm}[Global Mild Solution of \eqref{eq:seebd}]\label{thm:EUC}
  Let Assumption~\ref{a:A} and~\ref{a:SEEglobal} hold true and let $p>1$. Then, for every initial data $X_0 \in L^{2p}(\Omega; E_1)$ there exists a unique global mild solution $X$ of \eqref{eq:seebd} on $E_1$. Moreover,
  \begin{equation}
    \E{\sup_{0\leq t\leq T} \norm{X(t)}{1}^{2p}} \leq \hat K \left(1 + \E{\norm{X_0}{1}^{2p}}\right),\label{eq:XX02p}
  \end{equation}
  for all $T\geq 0$ and $X$ is $E_1$-continuous almost surely. If, in addition, $S_t$ is a contraction semigroup, then the statement is true even for $p=1$.
\end{thm}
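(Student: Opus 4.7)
The plan is to solve \eqref{eq:seebd} by a Banach fixed-point argument for the mild integral operator
\[
\Phi(Y)(t) := S_t X_0 + \int_0^t S_{t-s} B(Y(s))\, ds + \int_0^t S_{t-s} C(Y(s))\, dW_s
\]
on the space $\mathcal{Z}_T$ of $E_1$-valued predictable processes equipped with the norm $\normTp{Y} := \bigl(\E\sup_{t \leq T}\norm{Y(t)}{1}^{2p}\bigr)^{1/(2p)}$. A fixed point of $\Phi$ is, by definition, a global mild solution on $[0,T]$; its $E_1$-continuity will be automatic from the regularity of each of the three summands.

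Two estimates are the backbone. For the drift, Lemma~\ref{lem:StHa} with $\beta = \alpha$ gives the integrable but singular bound
\[
\norm{S_{t-s} B(Y(s))}{1} \leq K_{1,\alpha}\, (t-s)^{-(1-\alpha)} e^{-\delta(t-s)} \norm{B(Y(s))}{\alpha},
\]
and Minkowski's integral inequality combined with the Lipschitz/linear-growth statements in Assumption~\ref{a:SEEglobal} (and the growth bound \eqref{eq:LG}) controls the Bochner integral in $\normTp{\cdot}$. For the stochastic convolution I would invoke the Da~Prato--Kwapień--Zabczyk factorization method: represent $S_{t-s}$ via $\int_s^t (t-r)^{\gamma - 1}(r-s)^{-\gamma}\,dr$ (up to a constant) for some $\gamma \in (1/(2p), 1/2)$, rewrite the stochastic convolution as a deterministic integral of an $E_1$-valued Itô integral, and apply BDG together with the Lipschitz bound of $C$ into $\HS(U,E_1)$. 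Young's inequality in time then delivers an $E_1$-continuous version with the required $L^{2p}(\Omega)$ supremum estimate. The constraint $p > 1$ is imposed precisely so that $1/(2p) < 1/2$ and the above choice of $\gamma$ is possible; when $S_t$ is a contraction semigroup the factorization can be bypassed, since $\norm{S_{t-s} C(Y(s))}{\HS(U,E_1)} \leq \norm{C(Y(s))}{\HS(U,E_1)}$ uniformly, and Doob's inequality applied directly to the stochastic integral handles $p = 1$.

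Combining both estimates with the Lipschitz inequality \eqref{eq:16} yields
\[
\normTp{\Phi(Y) - \Phi(Z)}^{2p} \leq C(T) \int_0^T \E\sup_{r \leq s}\norm{Y(r) - Z(r)}{1}^{2p}\, ds
\]
with $C(T) \to 0$ as $T \to 0$. A sufficiently small $T_0$, depending only on $\hat L$, $\alpha$, $M$, and $\delta$ but not on $X_0$, makes $\Phi$ a strict contraction on $\mathcal{Z}_{T_0}$; iterating on $[kT_0, (k+1)T_0]$ for $k \in \N_0$ produces a global solution, and uniqueness is inherited from the uniqueness of the fixed point on each piece. The moment bound \eqref{eq:XX02p} is obtained by applying the same estimates to $X$ itself, using \eqref{eq:LG} instead of \eqref{eq:16}, and yields an inequality of the form
\[
u(t) \leq a + b \int_0^t u(s)\,(t-s)^{\beta - 1}\, ds, \qquad u(t) := \E\sup_{r \leq t}\norm{X(r)}{1}^{2p},
\]
with $a \sim 1 + \E\norm{X_0}{1}^{2p}$ and some $\beta > 0$. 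The extended Gronwall Lemma~\ref{lem:gronwall} then produces the desired bound on $[0,T]$.

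The principal technical obstacle is the supremum estimate for the stochastic convolution in the stronger norm $\norm{\cdot}{1}$: the semigroup does not simply commute with stochastic integration, and the singular factor $(t-s)^{-(1-\alpha)}$ inherited from Lemma~\ref{lem:StHa} has to be reconciled with the $\HS$-estimate. Ensuring that all constants in the contraction step depend only on the structural parameters and not on the initial data is what allows the local iteration to close into a genuine global solution rather than merely a local one.
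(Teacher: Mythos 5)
Your proposal is correct but takes a genuinely different route than the paper. The paper runs the Banach fixed point in the weaker norm $\normTp{Y}^{2p}:=\sup_{0\le t\le T}\E{\norm{Y(t)}{1}^{2p}}$, where the contraction estimate closes using only the Bochner inequality, Lemma~\ref{lem:StHa}, Jensen, Fubini--Tonelli, and a direct citation of the Da~Prato--Zabczyk stochastic-convolution result for the $C$-term; the $\E[\sup_t]$ moment bound \eqref{eq:XX02p} and the $E_1$-continuity are then recovered afterwards in the separate Lemmas~\ref{lem:EsupX} and \ref{lem:cont}. You instead place the iteration directly in the stronger $\E[\sup_{t\le T}\norm{\cdot}{1}^{2p}]$-norm and invoke the factorization/BDG machinery to make $\Kfix_C$ contract in that norm. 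This makes the a~priori estimate \eqref{eq:XX02p} fall out of the same fixed-point computation rather than requiring a second pass with Gronwall; the price is that one must verify the factorization estimate is available in $\HS(U,E_1)$ at the level of the iteration, whereas the paper only needs the (cheaper) $\sup_t\E$ bound to close the contraction, and pays for it once later. Both approaches rely on the same three ingredients: the interpolation smoothing bound of Lemma~\ref{lem:StHa}, the factorization method for the stochastic convolution (explicit in your write-up, delegated to \cite[Thm.~1.1]{dPZstochConv} and \cite[Thm.~6.10]{dPZinf} in the paper), and the extended Gronwall Lemma~\ref{lem:gronwall}; your handling of $p>1$ via the choice $\gamma\in(1/(2p),1/2)$ and of $p=1$ via Doob when $S_t$ is a contraction is precisely the dichotomy the paper encodes by citing those two references.

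One item you dismiss too quickly is the claim that ``$E_1$-continuity will be automatic from the regularity of each of the three summands.'' This is automatic for $S_tX_0$ and, via the factorization method, for $\Kfix_C$; but the drift convolution $\int_0^t S_{t-s}B(Y(s))\,\d s$ is delicate because $B$ takes values only in $E_\alpha$, not $E_1$, so the integrand is $\norm{\cdot}{1}$-singular at $s=t$. The paper proves Lemma~\ref{prop:cdrift} precisely to establish the $C([0,T];E_1)$-regularity of this term from the $\sup_t\norm{B(Y(t))}{\alpha}<\infty$ bound, by splitting the increment $\int_0^t-\int_0^s$ and applying Lemma~\ref{lem:StHa} twice with different exponents. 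Your argument should include this step; finiteness of the $\E[\sup_t\norm{\cdot}{1}^{2p}]$-norm of the drift convolution does not by itself give pathwise continuity.
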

\begin{rmk}
  Without much effort one can extend the theorem to time- and path-dependent predictable coefficients $B: \Omega\times \R_{\geq 0} \times E_1 \to E_\alpha$ and $C:\Omega\times \R_{\geq 0} \times E_1\to \HS(U,E_1)$, provided that~\eqref{eq:16} and~\eqref{eq:LG} hold. The same is true for Theorem~\ref{thm:EUCloc} below.
\end{rmk}

\begin{proof}
The theorem will be shown using a fixed-point argument. Using Lemma~\ref{lem:StHa} we will be able to prove that the following mapping is a contraction. We define
  \[ \Kfix(Y)(t) := S_tX_0 + \int_0^t S_{t-s}B(Y(s)) \d s + \int_0^t S_{t-s}C(Y(s)) \d W_s,\]
  for elements $Y$ out of the Banach space
  \[ \HTp := \setc{ Y:\Omega \times [0,T] \rightarrow E_1, \text{ predictable}}{ \normTp{Y}  <\infty},\quad T>0,\]
  equipped with the norm defined by
  \[\normTp{Y}^{2p} := \sup_{0\leq t\leq T} \E{\norm{Y(t)}{1}^{2p}}.\]
  To show the contraction property of $\Kfix$ on $\HTp$ for small enough $T>0$, we first decompose
  \[\Kfix(Y)(t) =: S_t X_0 + \Kfix_B(Y)(t) + \Kfix_C(Y)(t),\quad Y\in \HTp,\] 
  where $\Kfix_B$ is the convolution of $S$ with $B$ and $\Kfix_C$ is the stochastic convolution with $C$, respectively. 
  The first term is easiest to handle. From the strong continuity and boundedness of $S_t$ we get
  %%%%%%%%%%%%%%%%%%%%%%%%%%%%%%% 
  %% Need estimates for the semigroup (Hille-Yosida type) --> See papers of Sowers et al
  %%%%%%%%%%%%%%%%%%%%%%%%%%%%%%% 
  \begin{equation}
    \normTp{S_{(.)}X_0}^{2p} = \sup_{0\leq t\leq T} \E{\norm{S_tX_0}{1}^{2p}}  \leq  K_{1,1} \E{\norm{X_0}{1}^{2p}},
  \end{equation}
  where the constant $K_{1,1}$ depends on the bound for the norm of the semigroup. The term $\Kfix_B(Y)$ is more difficult to handle. Let $Y\in \HTp$, then by Bochner's inequality, Lemma~\ref{lem:StHa}, Jensen's inequality and the growth estimate~\eqref{eq:LG}
  \begin{gather}
    \begin{split}
      \normTp{\Kfix_B(Y)}^{2p} &\leq \sup_{0\leq t\leq T}\E{\left(\int_0^t \norm{S_{t-s}B(Y(s))}{1}\d s\right)^{2p}} \\
      &\leq K \sup_{0\leq t\leq T}\E{\left(\int_0^t\frac{\norm{B(Y(s))}{\alpha}}{(t-s)^{{1-\alpha}}} \d s\right)^{2p}}\\
     &\leq K T^{\alpha (1-2 p)} \sup_{0\leq t\leq T}\E{\int_0^t\frac{\left(1 + \norm{Y(s)}{1}\right)^{2p}}{(t-s)^{{1-\alpha}}} \d s} \\
      &\leq K  T^{2p\alpha} \left(1+ 
        \alpha T^{-\alpha}\sup_{0\leq t\leq T} \E{ \int_0^t \norm{Y(s)}{1}^{2p} \frac{\d s}{(t-s)^{{1-\alpha}}}}\right),
    \end{split}\label{eq:KBim}
  \end{gather}
  % \begin{gather}
  %   \begin{split}
  %     \normTp{\Kfix_B(Y)}^{2p} &\leq \sup_{0\leq t\leq T}\E{\left(\int_0^t \norm{S_{t-s}B(Y(s))}{1}\d s\right)^{2p}} \\
  %     &\leq K_{1,\alpha}^{2p}\sup_{0\leq t\leq T}\E{\left(\int_0^t\frac{\norm{B(Y(s))}{\alpha}}{(t-s)^{{1-\alpha}}} \d s\right)^{2p}}\\
  %    &\leq K_{1,\alpha}^{2p} \hat M^{2p} \alpha^{1-2p} T^{\alpha (1-2 p)} \sup_{0\leq t\leq T}\E{\int_0^t\frac{\left(1 + \norm{Y(s)}{1}\right)}{(t-s)^{{1-\alpha}}} \d s} \\
  %     &\leq K  T^{2p\alpha} \left(1+ 
  %       \alpha T^{-\alpha}\sup_{0\leq t\leq T} \E{ \int_0^t \norm{Y(s)}{1}^{2p} \frac{\d s}{(t-s)^{{1-\alpha}}}}\right),
  %   \end{split}\label{eq:KBim}
  % \end{gather}
  with constant $K$ changing from line to line, but depending only on $p$, $\alpha$ and $\hat M$. Note that to apply Jensen's inequality we have used that $(t-s)^{\alpha-1}\d s$ is a finite measure on $(0,t)$ with mass $\sfrac{t^{\alpha}}{\alpha}$, and that the inequality $\norm{a+b}{}^{2p} \le 2^p \left(\norm{a}{}^{2p} + \norm{b}{}^{2p}\right)$  has entered in the last step. Applying the Fubini-Tonelli theorem yields
  \[\sup_{0\leq t\leq T} \E{ \int_0^t \norm{Y(s)}{1}^{2p} \frac{\d s}{(t-s)^{{1-\alpha}}}} \leq \frac1{\alpha}T^{\alpha}\sup_{0\leq t\leq T} \E{\norm{Y(s)}{1}^{2p}}.\]
  Inserting into \eqref{eq:KBim} we get that 
  \[\normTp{\Kfix_B(Y)}^{2p} \leq K T^{2p\alpha} \left(1+\normTp{Y}^{2p}\right) < \infty.\]
  Let now $Y$, $Z\in \HTp$, then with the same arguments as in~\eqref{eq:KBim}, but with the Lipschitz estimate \eqref{eq:16} instead of \eqref{eq:LG} we obtain
  % \begin{multline*} 
  %   \normTp{\Kfix_B(Y) - \Kfix_B(Z)}^{2p}\leq  K_{1,\alpha}^{2p}\sup_{0\leq t\leq T}\E{\left(\int_0^t\frac{\norm{B(Y(s))- B(Z(s))}{\alpha}}{(t-s)^{{1-\alpha}}} \d s\right)^{2p}}\\
  %   \leq \alpha^{1-2p} K_{1,\alpha}^{2p}{\hat L}^{2p}T^{(2p-1)\alpha}\sup_{0\leq t\leq T}\E{\int_0^t\norm{Y(s)- Z(s)}{1}^{2p}\frac{\d s}{(t-s)^{{1-\alpha}}}}.
  % \end{multline*}
  \begin{multline*} 
    \normTp{\Kfix_B(Y) - \Kfix_B(Z)}^{2p}\leq  K_{1,\alpha}^{2p}\sup_{0\leq t\leq T}\E{\left(\int_0^t\frac{\norm{B(Y(s))- B(Z(s))}{\alpha}}{(t-s)^{{1-\alpha}}} \d s\right)^{2p}}\\
    \leq K'T^{(2p-1)\alpha}\sup_{0\leq t\leq T}\E{\int_0^t\norm{Y(s)- Z(s)}{1}^{2p}\frac{\d s}{(t-s)^{{1-\alpha}}}}.
  \end{multline*}
  Applying again the Fubini-Tonelli theorem yields
  \[\normTp{\Kfix_B(Y) - \Kfix_B(Z)}^{2p}\leq K'' T^{2p\alpha} \normTp{Y - Z}^{2p}.\]
  Here, the constants $K'$ and $K''$ depend only on $p$, $\alpha$ and $\hat L$. 

  To show similar properties for the stochastic convolution $\Kfix_C$ is exactly the same as in the proof of the classical result~\cite[Theorem 7.2, see pp.189]{dPZinf}. 
  % . Indeed, by mean value theorem we get from boundedness of the Fr\'echet derivative 
  % \[ \norm{Y-Z}{1} \leq  {\hat M} \norm{Y-Z}{1},\;\quad \forall\,Y,Z\in E_1.\]
  Everything together yields constants $K_B$ and $K_C$ independent of $X_0$, such that
  \begin{gather*}
    \begin{split}
      \normTp{\Kfix(Y) - \Kfix(Z)} &\leq   \normTp{\Kfix_B(Y) - \Kfix_B(Z)} + \normTp{\Kfix_C(Y) - \Kfix_C(Z)}\\
      &\leq  (K_B+K_C) T^{\alpha} \normTp{Y-Z}<1,
    \end{split}
  \end{gather*} 
  provided $T< (K_B + K_C)^{-\alpha}$. Hence, $\Kfix$ is a contraction on $\HTp$ and possesses a unique fixed point, which is a mild solution of \eqref{eq:seebd} up to time $T > 0$. Concatenating solutions, we obtain a global solution. Finally, to show the the uniqueness claim, we consider two arbitrary solutions $X_1$ and $X_2$ and the stopping times
  \[\tau_R := \inf\left\{t\geq 0 \;\vert\; \norm{\Kfix_B(X_i)}{1} \geq R,\text{ or }\norm{\Kfix_C(X_i)}{1} \geq R,\; i\in \{1,2\}\right\}.\]
Using the standard procedure as in the proof of~\cite[Theorem 7.2]{dPZinf}, but using the estimates for $\Kfix_B$ from above and Lemma~\ref{lem:gronwall} we obtain that the solutions $X_1$ and $X_2$ must coincide up to the stopping time $\tau_R$. Passing to the limit $R\to \infty$, global uniqueness follows. 
    The remaining part, namely showing~\eqref{eq:XX02p} and the continuity claim, is subject of Lemma~\ref{lem:EsupX} and~\ref{lem:cont} below.
\end{proof}

\begin{lem}\label{lem:EsupX}
  Let Assumption~\ref{a:A} and \eqref{eq:LG} hold true and let $p > 1$. Let $X$ be a mild solution on $[0,T]$ of~\eqref{eq:seebd} with initial value $X_0 \in L^{2p} (\Omega; E_1)$ such that 
  \[ \E{\int_0^T\norm{X(t)}{1}^{2p} \d t} < \infty.\]
  Then, 
  \[ \E{\sup_{0\leq t\leq T} \norm{X(t)}{1}^{2p}} \leq K_{p,T} \left(1 + \E{\norm{X_0}{1}^{2p}}\right).\]
  If, in addition, $S_t$ is a contraction semigroup, then the statement is true even for $p = 1$.
\end{lem}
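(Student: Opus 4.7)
My plan is to adapt the argument of \cite[Thm 7.2]{dPZinf} to the present interpolation setting, starting from the mild formulation
\begin{equation*}
X(t) = S_t X_0 + (\Kfix_B X)(t) + (\Kfix_C X)(t)
\end{equation*}
and estimating $\E{\sup_{t \leq T}\norm{\cdot}{1}^{2p}}$ for each of the three summands. The semigroup term is immediate from $\norm{S_t}{} \leq M$ on $E_1$ (Assumption~\ref{a:A} and the remark thereafter): $\E{\sup_{t \leq T}\norm{S_t X_0}{1}^{2p}} \leq M^{2p} \E{\norm{X_0}{1}^{2p}}$. For the drift convolution, Lemma~\ref{lem:StHa} with target index $1$ and source index $\alpha$ (or boundedness of $S_t$ on $E_1$ when $\alpha = 1$), together with the substitution $u = t-s$, yields the pathwise estimate
\begin{equation*}
\sup_{t \leq T}\norm{(\Kfix_B X)(t)}{1} \leq K \frac{T^{\alpha}}{\alpha}\sup_{r \leq T}\norm{B(X(r))}{\alpha},
\end{equation*}
which, after raising to $2p$ and invoking the linear-growth bound~\eqref{eq:LG} on $B$, yields an estimate of the form $K_1 T^{2p\alpha}\bigl(1 + \E{\sup_{r \leq T}\norm{X(r)}{1}^{2p}}\bigr)$ whose supremum term is of the same shape as the left-hand side and can be absorbed once $T$ is chosen small.

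The main obstacle is the stochastic convolution $(\Kfix_C X)(t)$, which is not a martingale so that the Burkholder--Davis--Gundy inequality is not directly available. For $p > 1$ I would use the factorization method: pick $\gamma \in (1/(2p), 1/2)$, which is nonempty precisely because $p > 1$, and use the stochastic Fubini theorem to write
\begin{equation*}
(\Kfix_C X)(t) = \frac{\sin(\pi\gamma)}{\pi}\int_0^t (t-s)^{\gamma-1} S_{t-s} Y_\gamma(s)\,ds,\qquad Y_\gamma(s) := \int_0^s (s-r)^{-\gamma} S_{s-r} C(X(r))\,dW_r.
\end{equation*}
The lower bound $\gamma > 1/(2p)$ places $(t-s)^{\gamma-1}$ in $L^{2p/(2p-1)}(0,T)$, so H\"older's inequality on the outer deterministic integral gives a pointwise bound of the form $\norm{(\Kfix_C X)(t)}{1}^{2p} \leq K \int_0^T \norm{Y_\gamma(s)}{1}^{2p}\,ds$ that is independent of $t \leq T$. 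The Hilbert-space Burkholder inequality applied to the \emph{ordinary} stochastic integral $Y_\gamma(s)$, combined with $2\gamma < 1$ and the linear growth of $C$ into $\HS(U,E_1)$, gives
\begin{equation*}
\E{\norm{Y_\gamma(s)}{1}^{2p}} \leq K \int_0^s (s-r)^{-2\gamma}\bigl(1 + \E{\norm{X(r)}{1}^{2p}}\bigr)\,dr,
\end{equation*}
so Fubini yields
\begin{equation*}
\E{\sup_{t \leq T}\norm{(\Kfix_C X)(t)}{1}^{2p}} \leq K_2 \Bigl(1 + \int_0^T \E{\norm{X(s)}{1}^{2p}}\,ds\Bigr).
\end{equation*}
In the contraction case $M = 1$, Kotelenez's maximal inequality for stochastic convolutions of contraction $C_0$-semigroups supplies the same bound already at $p = 1$, which takes care of the final assertion of the lemma.

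Putting the three estimates together on a small interval $[0,T_0]$, chosen so that $K_1 T_0^{2p\alpha} \leq 1/2$, and absorbing the $\E{\sup_{t \leq T_0}\norm{X(t)}{1}^{2p}}$ term on the left, one arrives at
\begin{equation*}
\E{\sup_{t \leq T_0}\norm{X(t)}{1}^{2p}} \leq C \bigl(1 + \E{\norm{X_0}{1}^{2p}}\bigr) + C' \int_0^{T_0} \E{\norm{X(s)}{1}^{2p}}\,ds.
\end{equation*}
Gronwall's lemma closes the estimate on $[0,T_0]$, and a standard iteration over successive subintervals of length $T_0$ (using $X(kT_0)$ as the initial condition at step $k$, which by the previous step has bounded $2p$-th moment) then extends the bound to an arbitrary horizon $T > 0$.
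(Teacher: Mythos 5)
Your proof is correct and reaches the same conclusion, but the two middle steps differ from the paper in a way worth pointing out. For the stochastic convolution $\Kfix_C(X)$ the paper does not re-derive the maximal inequality: for $p>1$ it invokes a known result for stochastic convolutions of $C_0$-semigroups (Da Prato--Zabczyk, the reference \cite{dPZstochConv}), and for $p=1$ in the contraction case it applies \cite[Thm.~6.10]{dPZinf}; you instead re-prove the $p>1$ estimate from scratch via the factorization formula (choosing $\gamma \in (1/(2p), 1/2)$, which is exactly where the assumption $p>1$ enters), and cite Kotelenez for $p=1$. That is more self-contained but otherwise equivalent. For the drift convolution the paper keeps the singular kernel $(t-s)^{\alpha-1}$ inside the Gronwall-type inequality and then closes the estimate with a single application of the extended Gronwall lemma (Lemma~\ref{lem:gronwall}) that is built precisely to handle this kernel; you instead pull out a factor $T^\alpha$ by bounding $\norm{B(X(\cdot))}{\alpha}$ by its running supremum, absorb the resulting $\E{\sup}$ term into the left-hand side on a sufficiently small interval $[0,T_0]$, apply the ordinary Gronwall lemma, and then iterate over subintervals. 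Both routes are sound; yours trades the singular Gronwall lemma for a smallness-of-$T_0$ argument plus iteration, which is slightly longer but more elementary. One caveat shared by both your argument and the paper's: the absorption step (and likewise the application of Gronwall to $u(t)=\E{\sup_{s\leq t}\norm{X(s)}{1}^{2p}}$) presupposes that this quantity is finite; in a fully rigorous write-up one would first establish this by localization, e.g. by stopping at $\tau_n = \inf\{t:\norm{X(t)}{1}\geq n\}$, deriving the bound with $n$-independent constants, and letting $n\to\infty$. Since the paper elides this as well, I do not count it against you, but you should be aware of it.
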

\begin{rmk}
We emphasize that the Lipschitz property \eqref{eq:16} is not needed to show this Lemma.
\end{rmk}

\begin{proof}
  We use the notation from the previous proof and write
  \[ X_t = S_t X_0 + \Kfix_B(X)_t  + \Kfix_C(X)_t.\]
  First, note that the integrability assumption on $X$ and the linear growth property~\eqref{eq:LG} yield
  \begin{equation}
    \label{eq:EintBCfinite}
    \EE \int_0^T \norm{B(X(t))}{\alpha}^{2p} + \norm{C(X(s)}{\HS(U;E_1)}^{2p}.
  \end{equation}
  For the case $p=1$ we may assume that $S_t$ is a contraction semigroup. Hence, we can apply~\cite[Theorem 6.10]{dPZinf} which gives
  \begin{equation}
    \E{\sup_{0\leq t\leq T} \norm{\Kfix_C(X)(t)}{1}^{2}} \leq K_1 \E{\int_0^T\norm{C(X(s))}{\HS(U,E_1)}^2\d s}. \label{eq:EsupW2}
  \end{equation}
  For the case $p>1$ we use that $S_t$ is a $C_0$-semigroup and apply~\cite[Theorem 1.1]{dPZstochConv} which yields
  \begin{equation}
    \E{\sup_{0\leq t\leq T} \norm{\Kfix_C(X)(t)}{1}^{2p}} \leq K_p T^{p-1} \E{\int_0^T\norm{C(X(s))}{\HS(U,E_1)}^{2p}\d s} <\infty. \label{eq:EsupWp}
  \end{equation}
  In both cases the growth bound~\eqref{eq:LG} yields
  \begin{equation}\label{eq:EsupWp2}
\E{\int_0^T\norm{C(X(s))}{\HS(U,E_1)}^{2p}\d s} \leq 2^{2p} {\hat M}^{2p}\left(T^{2p} + \E{\int_0^T\norm{X(s)}{1}^{2p} \d s}\right).
\end{equation}
  
  For the drift part we again use the linear growth bound~\eqref{eq:LG}, and proceeding similar to~\eqref{eq:KBim} we obtain
  % % \begin{gather}
  % %   \norm{\Kfix_B(X)(s)}{1}^{2p} \leq K_{\alpha,p,T} \left(1 +  \int_0^t \norm{X(s)}{1}^{2p} \tfrac{\d s}{(t-s)^{{1-\alpha}}}\right).
  % %     \label{eq:intBX}
  % % \end{gather}
  % % \begin{gather}
  % %   \begin{split}
  % %     & \norm{\int_0^tS_{t-s}B(X(s))\d s}{1}^{2p} \\
  % %     &\qquad\qquad\leq 2^{4p}K_1^{2p} {\hat M}^{2p} t^{p} \left(1 + 2^{2p-2} t^{-\alpha} \int_0^t \norm{X(s)}{1}^{2p} \tfrac{\d s}{(t-s)^{{1-\alpha}}}\right).
  % %   \end{split}\label{eq:intBX}
  % % \end{gather}
  \begin{multline*}
    \sup_{0\leq s\leq t} \norm{X(s)}{1}^{2p}\\
    \leq K_{\alpha, p,T} \left(1 + \norm{X_0}{1}^{2p} + \int_0^t \sup_{0\leq r\leq s}\norm{X(r)}{1}^{2p}\tfrac{\d s}{(t-s)^{{1-\alpha}}} + \sup_{0\leq s\leq t}\norm{\Kfix_C(s)}{1}^{2p} \right),
  \end{multline*}
  for $t\leq T$. Taking expectations, using the Fubini-Tonelli theorem and inserting the estimates concerning $\Kfix_C$ yields
  \begin{multline*} 
    \E{\sup_{0\leq s\leq t} \norm{X(s)}{1}^{2p}}   \leq K_{\alpha,p,T} \left(1 +\E{ \norm{X_0}{1}^{2p}}\right.\\
    \left. + \int_0^t \E{\sup_{0\leq r\leq s}\norm{X(r)}{1}^{2p}}\tfrac{\d s}{(t-s)^{{1-\alpha}}} + \int_0^t\E{\sup_{0\leq s\leq t}\norm{X(s)}{1}^{2p}} \d s \right)\\
    \leq  K_{\alpha,p,T} \left(1 +\E{ \norm{X_0}{1}^{2p}} + \int_0^t \E{\sup_{0\leq r\leq s}\norm{X(r)}{1}^{2p}}\tfrac{\d s}{(t-s)^{{1-\alpha}}}\right).
  \end{multline*}
  Finally, Gronwall's lemma~\ref{lem:gronwall} finishes the proof.
  % \begin{gather}
  %   \begin{split}
  %     & \norm{\int_0^tS_{t-s}B(X(s))\d s}{1}^{2p} \\
  %     &\qquad\leq K_1^{2p}\left(\int_0^t\frac{\norm{B(X(s))}{\alpha}}{(t-s)^{\sfrac12}} \d s\right)^{2p}\\
  %     &\qquad\qquad\leq 2^{4p}K_1^{2p} {\hat M}^{2p} t^{p} \left(1+2^{2p}\sup_{0\leq s\leq t}\norm{W_{A,C}(s)}{1}^{2p} \right.\\ 
  %       &\qquad\qquad\qquad\qquad\qquad\qquad\qquad\left.  + 2^{2p-1} t^{-\sfrac12} \int_0^t \norm{Y(s)}{1}^{2p} \frac{\d s}{(t-s)^{\sfrac12}}\right).
  %     \end{split}\label{eq:EsupY}
  % \end{gather}
%%%%%%%%%%%%%%%%%%%%%%%%%%%%%%%
%%%%%%%%%%%%%%%%%%%%%%%%%%%%%%%
%% Note that it is trivial from the existence proof that $\E \int_0^T\norm{X(t)}{1}^2p < \infty$ and thus, $\E\sup_t \norm{X(s)}{1}^2p < \infty
\end{proof}
$E_1$-continuity of the stochastic convolution $\Kfix_C(X)_t$ follows from standard results and the estimate \eqref{eq:EsupWp2}. However, for the $E_1$-continuity of $\Kfix_B(X)$ we provide a detailed proof, since we can in general not assume that $B$ is $E_1$-valued. To this end, we modify slightly the result~\cite[Proposition 4.2.1]{lunardiAnalytic} and its proof. 

%%%%%%%%%%%%%% inserted from Master thesis
\begin{lem}
  \label{prop:cdrift}
  Let $\psi: [0,T]\rightarrow E_\alpha$ be integrable and such that 
  \[\sup_{0\leq t\leq T} \norm{\psi(s)}{\alpha} =: \bar \psi <\infty.\]
  Then, $(S*\psi)_t := \int_0^t S_{t-s} \psi(s) ds$ is in $C([0,T], E_1)$.
\end{lem}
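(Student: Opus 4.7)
The plan is to show that $(S*\psi)_t \in E_1$ for every $t$ and that $t \mapsto (S*\psi)_t$ is continuous into $E_1$, reducing both tasks to Lemma~\ref{lem:StHa} together with the fact, recorded after Assumption~\ref{a:A}, that $S_t$ restricts to a $C_0$-semigroup on every interpolation space $E_\beta$, $\beta\in[0,1]$. First, Lemma~\ref{lem:StHa} (with $\beta=\alpha$ and target exponent $1$) gives
\[\norm{S_{t-s}\psi(s)}{1} \le K_{1,\alpha}(t-s)^{\alpha-1} e^{-\delta(t-s)}\,\bar\psi,\]
which is integrable in $s\in[0,t]$ because $\alpha>0$. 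Hence $(S*\psi)_t$ is a well-defined Bochner integral in $E_1$ with $\norm{(S*\psi)_t}{1} \le K_{1,\alpha}\bar\psi\,t^{\alpha}/\alpha$.

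For right-continuity at $t_0\in[0,T)$ I would use the semigroup identity $S_{t-s}=S_{t-t_0}S_{t_0-s}$ to split, for $t>t_0$,
\[(S*\psi)_t - (S*\psi)_{t_0} \;=\; (S_{t-t_0} - \operatorname{Id})(S*\psi)_{t_0} \;+\; \int_{t_0}^t S_{t-s}\psi(s)\d s.\]
The second summand has $E_1$-norm of order $(t-t_0)^\alpha$ by the bound above, while the first vanishes in $E_1$ as $t\to t_0^+$ because $(S*\psi)_{t_0}\in E_1$ and $S_t$ is strongly continuous on $E_1$.

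Left-continuity at $t_0\in(0,T]$ is treated analogously, via the decomposition
\[(S*\psi)_{t_0} - (S*\psi)_t \;=\; \int_t^{t_0}S_{t_0-s}\psi(s)\d s \;+\; \int_0^t S_{t-s}\bigl(S_{t_0-t}-\operatorname{Id}\bigr)\psi(s)\d s\]
for $t<t_0$. The first integral is again of order $(t_0-t)^\alpha$. The main obstacle is the second term: the estimate $\norm{S_{t-s}h}{1}\le K_{1,\alpha}(t-s)^{\alpha-1}\norm{h}{\alpha}$ blows up at $s=t$, so dominated convergence cannot be applied directly on all of $[0,t]$. The remedy I have in mind is to fix $\epsilon>0$ and split at $s=t-\epsilon$: the contribution from $[t-\epsilon,t]$ has $E_1$-norm bounded by $C\epsilon^\alpha$ uniformly in $t$ near $t_0$, using the uniform boundedness of $S_{t_0-t}-\operatorname{Id}$ on $E_\alpha$; on $[0,t-\epsilon]$, the singular factor is bounded by $\epsilon^{\alpha-1}$, the $C_0$-semigroup property on $E_\alpha$ yields $\norm{(S_{t_0-t}-\operatorname{Id})\psi(s)}{\alpha}\to 0$ pointwise as $t\to t_0^-$, and a $t$-independent majorant is available, so dominated convergence handles this piece. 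Sending $\epsilon\to 0$ then completes the argument.
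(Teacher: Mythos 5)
Your proof is correct, and it reaches the same conclusion by a genuinely different route. The paper writes, for $0<s<t$,
\[(S*\psi)_t - (S*\psi)_s = \int_0^s\!\int_{s-r}^{t-r} AS_u\,\psi(r)\,\d u\,\d r + \int_s^t S_{t-r}\psi(r)\,\d r,\]
i.e.\ it expresses the semigroup difference $S_{t-r}-S_{s-r}$ by the fundamental theorem of calculus using $\frac{\d}{\d u}S_u=AS_u$, and then bounds $\norm{AS_u\,h}{1}=\norm{S_u h}{2}\le K_{2,\alpha}u^{\alpha-2}\norm{h}{\alpha}$ via Lemma~\ref{lem:StHa} with target exponent $2$. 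This produces an explicit modulus of continuity $K\bar\psi\bigl(|t-s|^\alpha+|t^\alpha-s^\alpha|\bigr)$, handles left- and right-continuity uniformly, and requires no limit-interchange argument. You instead factor out the semigroup: $(S_{t-t_0}-\operatorname{Id})(S*\psi)_{t_0}$ for right-continuity, and $S_{t-s}(S_{t_0-t}-\operatorname{Id})$ inside the integral for left-continuity, the latter requiring the $\epsilon$-split plus dominated convergence to circumvent the singularity at $s=t$; you also rely on strong continuity of $S_t$ on $E_1$ and $E_\alpha$, which is indeed available (it is recorded after Assumption~\ref{a:A}). Your argument is a bit more elementary in that it only ever invokes Lemma~\ref{lem:StHa} with target exponent $1$ and never differentiates the semigroup, but it sacrifices the explicit Hölder-type modulus: both the $(S_{t-t_0}-\operatorname{Id})(S*\psi)_{t_0}$ term and the dominated-convergence piece vanish without a rate. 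Both proofs are valid; the paper's is shorter and more quantitative, yours trades that for a softer, purely qualitative semigroup argument.
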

% \begin{rmk}
%   With \autoref{lem:PhiLip} we also get $\sup_{0\leq t\leq T} \norm{\Phi(X^{(N)})(t)}{H^1\times H^1\times \R} <\infty$ $\P$-a.s. so that indeed, we can apply the proposition path-wise on $F_N\circ X^{(N)}$.
% \end{rmk}
\begin{proof}
  Note that for $0<t\leq T$ and arbitrary $0<\epsilon<t$,
  \[ \tfrac{\d}{\d t}S_t\varphi =  \tfrac{\d}{\d t} S_{t-\epsilon}S_\epsilon \varphi = AS_{t-\epsilon} S_\epsilon \varphi= AS_t \varphi,\quad\forall \varphi \in E,\]
  where we use that $S_\epsilon\varphi \in \dom(A)$ and $ \tfrac{\d}{\d t} S_t = AS_t$ on $\dom(A)$; see e.g. \cite[Lem. II.1.3]{Engel}. We now observe that for $0<s<t\leq T$
  \begin{gather*}
%    \begin{split}
      \int_0^t S_{t-r} \psi(r)\d r - \int_0^s S_{s-r}\psi(r)\d r 
      %= \int_0^s\left(S_{t-r} - S_{s-r}\right) \psi(r)\d r + \int_s^t S_{t-r} \psi(r) \d r\\
      = \int_0^s \int_{s-r}^{t-r} AS_{u} \psi(u)\d u\d r + \int_s^t S_{t-r}\psi(r) \d r.
%    \end{split}
  \end{gather*}
  Hence, with Bochner's inequality and Lemma~\ref{lem:StHa}
%   \begin{gather*}
%     \begin{split}
%       &\quad   \norm{\int_0^t S_{t-r} \psi(r)\d r - \int_0^s S_{s-r}\psi(r)\d r}{1} \\
% %      &\leq \int_0^s \int_{s-u}^{t-u} \norm{AS_{r}\psi(r)}{1} \d r\d u + \int_s^t \norm{S_{t-r}\psi(r)}{1} \d r\\
%       &\leq K_{2,\alpha}\int_0^s \int_{s-u}^{t-u}  r^{\alpha-2}\norm{\psi(r)}{\alpha} \d r\d u + K_{1,\alpha} \int_s^t r^{\alpha-1} \norm{\psi(r)}{\alpha} \d r\\
%       &\leq  \bar\psi  K_{\alpha} \left(\abs{t-s}^{\alpha}+\abs{t^{\alpha} - s^{\alpha}}\right) \\
%       &\longrightarrow 0,\quad\text{as}\quad\abs{t-s}\rightarrow 0.
%     \end{split}
%   \end{gather*}
  \begin{multline*}
    \norm{(S*\psi)_t - (S*\psi)_s}{1}\\
%      &\leq \int_0^s \int_{s-u}^{t-u} \norm{AS_{r}\psi(r)}{1} \d r\d u + \int_s^t \norm{S_{t-r}\psi(r)}{1} \d r\\
    \leq K_{2,\alpha}\int_0^s \int_{s-r}^{t-r}  u^{\alpha-2}\norm{\psi(u)}{\alpha} \d u\d r + K_{1,\alpha} \int_s^t u^{\alpha-1} \norm{\psi(u)}{\alpha} \d u\\
    \leq  \bar\psi  K_{\alpha} \left(\abs{t-s}^{\alpha}+\abs{t^{\alpha} - s^{\alpha}}\right) 
    \longrightarrow 0,\quad\text{as}\quad\abs{t-s}\rightarrow 0.
  \end{multline*}
  For $s=0$ we get directly with Bochner inequality and Lemma~\ref{lem:StHa} for $t\searrow 0$,
  \begin{equation*}
    \norm{\int_0^t S_{t-r}\psi(r)\d r}{1}
    \leq K_{1,\alpha} \bar{\psi}  \int_0^t (t-r)^{\alpha-1}\d r
    = \frac1\alpha \bar{\psi} K_{1,\alpha} t^{\alpha} \rightarrow 0. \qedhere
%    \longrightarrow 0,\quad\text{as}\quad t\rightarrow 0.
  \end{equation*}
\end{proof}

%%%%%%%%%%%%%%%end insertion

\begin{lem}\label{lem:cont}
  Under the assumptions of Lemma~\ref{lem:EsupX} the mild solution $X$ of~\eqref{eq:seebd} is almost surely $E_1$-continuous.
\end{lem}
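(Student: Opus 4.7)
The plan is to decompose the mild solution as
\[ X(t) = S_t X_0 + \mathcal{K}_B(X)(t) + \mathcal{K}_C(X)(t), \]
and to establish $E_1$-continuity of each of the three terms separately. Throughout, I would work $\omega$-wise, relying on the fact from Lemma~\ref{lem:EsupX} that $\sup_{0 \le t \le T} \norm{X(t)}{1} < \infty$ almost surely.

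The first term is routine: since $A$ restricted to $E_1$ generates a $C_0$-semigroup on $E_1$ and $X_0 \in E_1$ almost surely, the map $t \mapsto S_t X_0$ is continuous in $E_1$ by strong continuity. For the deterministic convolution $\mathcal{K}_B(X)$ I would invoke Lemma~\ref{prop:cdrift} directly with $\psi(s) := B(X(s))$. The linear growth bound~\eqref{eq:LG} combined with the a.s. boundedness of $\norm{X(s)}{1}$ on $[0,T]$ gives
\[ \sup_{0 \le s \le T} \norm{B(X(s))}{\alpha} \le \hat M\bigl(1 + \sup_{0 \le s \le T} \norm{X(s)}{1}\bigr) < \infty \quad \text{a.s.,} \]
together with integrability of $s \mapsto B(X(s))$ (since the integrand is predictable and essentially bounded in $E_\alpha$), so the hypotheses of Lemma~\ref{prop:cdrift} are met pathwise.

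The main obstacle is the $E_1$-continuity of the stochastic convolution $\mathcal{K}_C(X)$, since stochastic integrals require more delicate handling than Bochner integrals. Here I would apply the factorization method (as in \cite{dPZinf}, Theorem~5.9 and its corollaries): choose $\theta \in (0,1/2)$ and write
\[ \mathcal{K}_C(X)(t) = \frac{\sin(\pi\theta)}{\pi} \int_0^t (t-s)^{\theta-1} S_{t-s} Y(s) \, \d s, \quad Y(s) := \int_0^s (s-r)^{-\theta} S_{s-r} C(X(r)) \, \d W_r. \]
Using the Hilbert-Schmidt valued $C$ together with the growth estimate~\eqref{eq:EsupWp2} and Lemma~\ref{lem:StHa} for $\alpha = 1$ and $\beta = 1$ (applied to $S_{s-r}$ on $\HS(U,E_1)$-valued integrands), one shows that $\E{\int_0^T \norm{Y(s)}{1}^{2p}\, \d s} < \infty$ for $p > 1$ small enough that $2p\theta > 1$; this requires the choice $p > 1$ assumed in Lemma~\ref{lem:EsupX}. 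Hence $Y \in L^{2p}(\Omega \times [0,T]; E_1)$, and the deterministic factorization integral $t \mapsto \int_0^t (t-s)^{\theta-1} S_{t-s} Y(s) \d s$ is $E_1$-continuous almost surely by a variant of Lemma~\ref{prop:cdrift} (or directly by \cite[Proposition~5.9]{dPZinf} applied within $E_1$). In the case $p=1$ with contraction semigroup, one can invoke \cite[Theorem~6.10]{dPZinf} directly on $E_1$ to obtain continuity without the factorization detour.

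Combining the three pieces, $X$ is almost surely continuous as an $E_1$-valued process on $[0,T]$, and since $T > 0$ is arbitrary, on $[0,\infty)$.
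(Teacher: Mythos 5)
Your proof takes essentially the same route as the paper: the same three-way decomposition $X_t = S_t X_0 + \Kfix_B(X)_t + \Kfix_C(X)_t$, strong continuity for the first term, Lemma~\ref{prop:cdrift} applied pathwise with $\psi(s)=B(X(s))$ for the drift convolution, and the stochastic-convolution continuity results for the third term, with the $p=1$ contraction case handled via~\cite[Theorem 6.10]{dPZinf}. The only difference is that for $p>1$ you unroll the factorization method explicitly, whereas the paper simply cites~\cite[Theorem 1.1]{dPZstochConv} (together with the estimate~\eqref{eq:EsupWp2}), which packages exactly that argument; in that unrolling, note that Lemma~\ref{lem:StHa} requires $\alpha>\beta$, so for $\alpha=\beta=1$ you should instead invoke plain boundedness of the semigroup on $E_1$, and the phrasing ``$p>1$ small enough that $2p\theta>1$'' should read: since $p>1$ one can choose $\theta\in(1/(2p),1/2)$.
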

\begin{proof}
  As above, we decompose
  \[X_t = S_t X_0 + \Kfix_B(X)_t + \Kfix_C(X)_t.\]
  Continuity of the first summand in the decomposition is immediate, since $S_t$ is strongly continuous. From Lemma~\ref{lem:EsupX} we get that $\psi(t) := B(X)_t$ satisfies the conditions of Lemma~\ref{prop:cdrift} and hence continuity of $\Kfix_B(X)_t$ follows. In the case $p>1$, estimate \eqref{eq:EsupWp2} together with \cite[Theorem 1.1]{dPZstochConv} yields continuity of $\Kfix_C(X)$. For $p=1$ we may assume that $S_t$ is a contraction semigroup on $E_1$ and apply~\cite[Theorem 6.10]{dPZinf} instead. Note that we are always using the continuous modifications of the stochastic integrals\slash{}convolutions.
\end{proof}

Together, these Lemmas complete the proof of Theorem~\ref{thm:EUC}.

\subsection{Existence of local mild solutions}
To obtain only local solutions up to a stopping time $\tau$, we can relax the assumptions on $B$ and $C$ made in the previous subsection.

\begin{ass}
  \label{a:SEElocal}
  There exists $\alpha \in (0,1]$ such that $B: E_1 \rightarrow E_\alpha$ and $C: E_1\to \HS(U,E_1)$ are Lipschitz continuous on bounded sets, i.\,e. for all $N\in \N$ there exists $L_N$ such that   \begin{equation}
    \label{eq:BDlip}
    \norm{B(Y) - B(Z)}{\alpha} + \norm{C(Y)-C(Z)}{\HS(U, E_1)} \leq L_N\norm{Y- Z}{1}
  \end{equation}
  holds for all $Y$, $Z\in E_1$ with $\norm{Y}{A}$, $\norm{Z}{A} \leq (N + 1)$.
\end{ass}

\begin{rmk}
  Assumption~\ref{a:SEElocal} yields for all $N\in \N$ constants $M_N$ such that for all $Y\in E_1$ with $\norm{Y}{1}\leq (N + 1)$
  \begin{equation}
    \label{eq:BDlg}
    \norm{B(Y)}{\alpha} + \norm{C(Y)}{\HS(U, E_1)} \leq M_N \left(1 + \norm{Y}{1}\right).
  \end{equation}
\end{rmk}
%\begin{proof}
%  Clearly, the statement is true on $E_0$. Since $S_t$ and $A$ commute on $E_1$, all the estimates directly extend to $E_1$. In this case, the domain of $A$ is
%  \[ \setc{u\in E_1}{ Au \in E_1}.\]
%\end{proof}

\begin{thm}[Local Mild Solution of \eqref{eq:seebd}]\label{thm:EUCloc}
  Let Assumption~\ref{a:A} and~\ref{a:SEElocal} hold true and let $p >1$. Then, for every initial data $X_0 \in L^{2p}(\Omega; E_1)$ there exists a unique mild $E_1$-solution $X$ of \eqref{eq:seebd} on a maximal stochastic interval $\llbrak 0, \tau \llbrak$. Moreover, $X$ is $E_1$-continuous on $\llbrak 0, \tau \llbrak$, $\tau > 0$ and $\lim_{t\nearrow \tau}\norm{X(t)}{1} = \infty$ on $\{\tau <\infty\}$ almost surely. 
 If, in addition, $S_t$ is a contraction semigroup, then the statement is true even for $p=1$.
\end{thm}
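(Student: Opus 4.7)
The strategy is the standard localization of Theorem~\ref{thm:EUC} through radial truncation of the coefficients, followed by a pasting argument along a sequence of stopping times.

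\emph{Step 1 (Global truncation).} Fix the radial retraction $\pi_N:E_1\to E_1$ defined by $\pi_N(Y)=Y$ if $\norm{Y}{1}\leq N$ and $\pi_N(Y)=N Y/\norm{Y}{1}$ otherwise. A direct computation shows that $\pi_N$ is globally Lipschitz with constant bounded by a universal constant (say $2$) independent of $N$, and its image is contained in the closed ball of radius $N$, hence inside the ball of radius $N+1$ on which Assumption~\ref{a:SEElocal} provides the Lipschitz constant $L_{N+1}$. The truncated coefficients $B_N:=B\circ\pi_N$ and $C_N:=C\circ\pi_N$ therefore satisfy the global Lipschitz estimate~\eqref{eq:16} with constant $2L_{N+1}$. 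Applying Theorem~\ref{thm:EUC} yields, for each $N$, a unique global $E_1$-continuous mild solution $X^N$ of the $N$-truncated equation with initial data $X_0$. Define
\[\tau_N:=\inf\left\{t\geq 0:\norm{X^N(t)}{1}\geq N\right\},\]
which, by $E_1$-continuity, is a predictable stopping time with $\tau_N>0$ almost surely on $\{\norm{X_0}{1}<N\}$; the latter event has probability tending to $1$ as $N\to\infty$.

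\emph{Step 2 (Consistency and pasting).} I will argue that $\tau_N\leq\tau_{N+1}$ and $X^N=X^{N+1}$ on $\llbrak 0,\tau_N\rrbrak$. On the interval $\llbrak 0,\tau_N\wedge\tau_{N+1}\rrbrak$ both processes lie in the ball of radius $N$, where $B_N=B_{N+1}=B$ and $C_N=C_{N+1}=C$; hence both are mild solutions of the $N{+}1$-truncated equation on this interval with identical initial datum. The same stopping-time-plus-Gronwall argument used in the uniqueness part of Theorem~\ref{thm:EUC} (invoking Lemma~\ref{lem:StHa} on the drift convolution, the stochastic-convolution estimate for $\Kfix_C$, and Lemma~\ref{lem:gronwall}) forces the two processes to agree on this interval, and continuity then forces $\tau_N\leq\tau_{N+1}$. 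Setting $\tau:=\sup_N\tau_N$ and $X(t):=X^N(t)$ for $t\in\llbrak 0,\tau_N\llbrak$ defines an $E_1$-continuous, predictable process on $\llbrak 0,\tau\llbrak$ that satisfies the mild formulation~\eqref{eq:seebdm} with the original coefficients $B,C$ on each $\llbrak 0,\tau_N\llbrak$, and therefore on $\llbrak 0,\tau\llbrak$.

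\emph{Step 3 (Blow-up, maximality, uniqueness).} By continuity, $\norm{X^N(\tau_N)}{1}=N$ on $\{\tau_N<\infty\}$, so $\sup_{t<\tau_N}\norm{X(t)}{1}\geq N$; letting $N\to\infty$ gives $\limsup_{t\nearrow\tau}\norm{X(t)}{1}=\infty$ on $\{\tau<\infty\}$. If along some sequence $t_k\nearrow\tau<\infty$ the quantity $\norm{X(t_k)}{1}$ were to stay bounded by some $M$, then for $N>M$ the stopping time $\tau_N$ would exceed each $t_k$, whence $\tau_N\geq\tau$; but Theorem~\ref{thm:EUC} gives $X^N$ globally and $E_1$-continuously, which together with $X=X^N$ on $\llbrak 0,\tau_N\llbrak$ would allow a strict extension of $X$ past $\tau$, contradicting maximality. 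This upgrades the $\limsup$ to $\lim$. For uniqueness, any other $E_1$-continuous mild solution $\widetilde X$ on $\llbrak 0,\widetilde\tau\llbrak$ stays inside the ball of radius $N$ up to the stopping time $\widetilde\tau_N:=\widetilde\tau\wedge\inf\{t:\|\widetilde X(t)\|_1\geq N\}$, hence also solves the $N$-truncated equation there; Theorem~\ref{thm:EUC}'s uniqueness then yields $\widetilde X=X^N=X$ on $\llbrak 0,\widetilde\tau_N\rrbrak$, and passing to $N\to\infty$ gives $\widetilde\tau\leq\tau$ and $\widetilde X=X$ on $\llbrak 0,\widetilde\tau\llbrak$. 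The contraction-semigroup case $p=1$ follows in exactly the same way from the corresponding statement in Theorem~\ref{thm:EUC}. The main obstacle is really the pasting in Step~2: ensuring that the piecewise definition produces a bona fide mild solution on the \emph{open} stochastic interval $\llbrak 0,\tau\llbrak$ and that the blow-up is as strong as stated.
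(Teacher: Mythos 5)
Your overall localization strategy — truncate, solve globally via Theorem~\ref{thm:EUC}, paste along a sequence of stopping times — matches the paper's, but you implement the truncation differently. The paper multiplies the coefficients by a smooth cutoff, $B_N(u) = h_N(\norm{u}{1}) B(u)$, and proves global Lipschitz continuity of $B_N, C_N$ via the chain rule for $h_N$ in Lemma~\ref{lem:BNCNglobal}; you instead precompose, $B_N = B\circ\pi_N$, with the radial retraction $\pi_N$ onto the ball of radius $N$. This route is arguably cleaner: $\pi_N$ is the metric projection onto a closed convex set in a Hilbert space, hence $1$-Lipschitz (no need for the constant $2$ or for any smoothness of the cutoff), and the global Lipschitz constant for $B_N$ follows directly from Assumption~\ref{a:SEElocal} without a product or chain rule. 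Both truncations coincide with the original coefficients on the ball of radius $N$, which is all the pasting in Step~2 requires. Two minor imprecisions in Step~2: you assert that on $\llbrak 0,\tau_N\wedge\tau_{N+1}\rrbrak$ \emph{both} processes lie in the ball of radius $N$, which is only known a priori for $X^N$; the clean version is that $X^N$ restricted to $\llbrak 0,\tau_N\rrbrak$ solves the $(N{+}1)$-truncated equation (since $B_N=B_{N+1}$ on that ball), and uniqueness from Theorem~\ref{thm:EUC} then identifies it with $X^{N+1}$. You also do not address predictability of $\tau$, which the paper extracts from strict monotonicity of $(\tau_N)$ and which is what makes the phrase ``maximal stochastic interval $\llbrak 0,\tau\llbrak$'' precise.

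The genuine problem is your upgrade of $\limsup$ to $\lim$ in Step~3. The implication ``if $\norm{X(t_k)}{1}\le M$ along some $t_k\nearrow\tau$ then $\tau_N>t_k$ for $N>M$'' is false: $\tau_N$ is the \emph{first} hitting time of level $N$, and boundedness of the norm at the particular instants $t_k$ says nothing about whether the process exceeded $N$ somewhere in $[0,t_k]$ and then dipped back below $M$. So you cannot conclude $\tau_N\ge t_k$, let alone $\tau_N\ge\tau$. The conclusion $\lim_{t\nearrow\tau}\norm{X(t)}{1}=\infty$ (as opposed to $\limsup$) is a ``no return from explosion'' statement and needs a restart argument — from a time near $\tau$ where the norm is at most $M$, the locally Lipschitz evolution survives a quantitatively positive further time, contradicting $\tau<\infty$. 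It is fair to note that the paper's own proof just writes $\lim_{t\to\tau}\norm{X(t)}{1}=\lim_N\norm{X(\tau_N)}{1}$ without explaining why the unrestricted limit exists, so you were confronting a gap the paper glosses over; but your attempt to fill it does not work as stated and should either be repaired with a genuine restart argument or weakened to the $\limsup$ statement (which already suffices for the maximality and uniqueness claims).
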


We use the following localization method, similar to the truncation in~\cite{sowersEtAl}. For each $N\in \N$ fix a monotone decreasing function $h_N \in C^\infty(\R_{\geq 0})$ with 
\begin{equation}
  \label{eq:truncfct}
  h_N(x) = \begin{cases} 1, & x\leq N, \\ 0,&x\geq N+1,\end{cases}
\end{equation}and for a constant $c>0$,
\[ \sup_{N\in \N}\norm{\ddx h_N}{\infty} \leq c.\]
Define the truncated coefficients
\[ B_N(u) := h_N(\norm{u}{1}) B(u),\quad C_N(u) :=h_N(\norm{u}{1}) C(u),\]
and consider the localized stochastic evolution equation
\begin{equation}
  \d X^{(N)}(t) = \left[A X^{(N)}(t) + B_N( X^{(N)}(t)) \right]\d t + C_N( X^{(N)}(t)) \d W_t.\label{eq:SEEqN}
\end{equation}
\begin{lem}
  \label{lem:BNCNglobal}
  Let $B$ and $C$ be such that the local Lipschitz assumption~\ref{a:SEElocal} holds. Then, $B_N$ and $C_N$ satisfy the global Lipschitz assumption~\ref{a:SEEglobal} for all $N\in\N$.
\end{lem}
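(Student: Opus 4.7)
My plan is to derive the global Lipschitz estimate for $\cB_N$ (and for $\cC_N$ by the same token) from the product decomposition
\[ \cB_N(Y) - \cB_N(Z) = h_N(\norm{Y}{1}) \bigl(\cB(Y) - \cB(Z)\bigr) + \bigl(h_N(\norm{Y}{1}) - h_N(\norm{Z}{1})\bigr)\cB(Z), \]
combined with a case distinction based on whether $\norm{Y}{1}$ and $\norm{Z}{1}$ exceed the truncation threshold $N+1$. The two key ingredients I will use are Assumption~\ref{a:SEElocal} (which gives a local Lipschitz constant $L_N$ for $\cB$ on the closed ball of radius $N+1$) together with the local growth bound~\eqref{eq:BDlg} that follows from it, and the fact that $h_N:\R_{\geq 0}\to [0,1]$ is globally $c$-Lipschitz with support in $[0,N+1]$.

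If both $\norm{Y}{1},\norm{Z}{1} > N+1$, then $\cB_N(Y) = \cB_N(Z) = 0$ and there is nothing to prove. If both $\norm{Y}{1},\norm{Z}{1}\leq N+1$, I bound the first summand above by $L_N\norm{Y-Z}{1}$ (using $h_N\leq 1$ and the local Lipschitz property of $\cB$), and the second summand by $c M_N(N+2)\norm{Y-Z}{1}$ (using the $c$-Lipschitz property of $h_N$ together with the local growth bound $\norm{\cB(Z)}{\alpha}\leq M_N(1+\norm{Z}{1})\leq M_N(N+2)$). In the remaining mixed case, WLOG $\norm{Y}{1}\leq N+1<\norm{Z}{1}$, one has $\cB_N(Z)=0$, while
\[ |h_N(\norm{Y}{1})| = |h_N(\norm{Y}{1}) - h_N(\norm{Z}{1})| \leq c\norm{Y-Z}{1}, \]
so $\norm{\cB_N(Y)-\cB_N(Z)}{\alpha} \leq c M_N(N+2)\norm{Y-Z}{1}$, again invoking the local growth bound for $\cB$ at the point $Y$ which lies inside the ball of radius $N+1$.

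The argument for $\cC_N$ proceeds identically after replacing $\norm{\cdot}{\alpha}$ by $\norm{\cdot}{\HS(U,E_1)}$, since Assumption~\ref{a:SEElocal} and~\eqref{eq:BDlg} cover $\cC$ with bounds of the same form. Adding the two estimates yields the global Lipschitz constant $\hat L_N$ for the pair $(\cB_N,\cC_N)$ in the sense of Assumption~\ref{a:SEEglobal}. I do not anticipate any serious obstacle; the only point requiring slight care is the mixed case, where the \emph{missing} local Lipschitz estimate on $\cB$ at the point $Z$ outside the cutoff ball is compensated by the observation that the cutoff factor $h_N(\norm{Y}{1})$ itself is already of order $\norm{Y-Z}{1}$.
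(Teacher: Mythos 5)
Your proposal is correct and follows the paper's proof in all essentials: the same product decomposition of $\cB_N(Y)-\cB_N(Z)$, the same appeal to the local Lipschitz constant $L_N$ on the ball of radius $N+1$, and the same use of the local growth bound together with the $c$-Lipschitz cutoff to control the second piece. The only cosmetic differences are that you organize the argument as a case distinction on $(Y,Z)$ rather than by summand, and that you use the reverse triangle inequality to show $Y\mapsto h_N(\norm{Y}{1})$ is $c$-Lipschitz where the paper invokes the chain rule and mean value theorem for Fr\'echet derivatives (arriving at a slightly larger constant).
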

\begin{proof}
  First, it is obvious to see that
  \begin{align}
    \label{eq:BNCNbd}
    \norm{B_N(Y)}{\alpha} + \norm{C_N(Y)}{\HS(U,E_1)} &\leq
    \begin{cases}
      M_{N+1}(2+N) \norm{Y}{1},& \norm{Y}{1} \leq N+1,\\
      0,& \norm{Y}{1} > N+1
    \end{cases}\\
    &\leq M_N \left(1 + \norm{Y}{1}\right). \notag
  \end{align}
  For the global Lipschitz continuity let $Y$, $Z\in E_1$ and assume, without loss of generality, that $\norm{Y}{1} \geq \norm{Z}{1}$. Then, write
  \begin{equation}
    \label{eq:20}
    B_N(Y) - B_N(Z) = h_N(\norm{Y}{1})\left(B(Y) - B(Z)\right)+ B(Z) \left(h_N(\norm{Y}{1}) - h_N(\norm{Z}{1})\right).
  \end{equation}
  If $\norm{Y}{1}> N+1$, then the first term vanishes. Else, it holds that $\norm{Z}{1}\leq \norm{Y}{1} \le N+1$ and thus, in both cases,
  \begin{equation}
    \label{eq:21}
    \norm{h_N(\norm{Y}{1})(B(Y) - B(Z))}{\alpha} \leq L_N \norm{Y-Z}{1}.
  \end{equation}
  The second term in \eqref{eq:20} vanishes if $\norm{Z}{1} > N+1$. Otherwise,
  \begin{equation}
    \label{eq:22}
    \norm{B(Z)}{\alpha} \abs{h_N(\norm{Y}{1}) - h_N(\norm{Z}{1})} \leq 2 c M_N(2 + N) \norm{Y-Z}{1},
  \end{equation}
  where we applied chain rule and mean value theorem for Fr\'echet derivatives. Of course, replacing $B$ by $C$ and $\norm{.}{\alpha}$ by $\norm{.}{\HS(U,E_1)}$ changes nothing in the computation so that we get a global Lipschitz constant $\hat L$, depending on $c$, $M_N$ and $L_N$.
\end{proof}

For the proof of theorem~\ref{thm:EUCloc} we may assume $X_0\in L^{2p}(\Omega:E_1)$, some $p\geq 1$, to be given and Assumptions~\ref{a:A} and~\ref{a:SEElocal} to be true. For $N\in\N$ we then denote by $X^{(N)}$ the unique mild solution to the localized equation~\eqref{eq:SEEqN}, which exists due to Theorem~\ref{thm:EUC}. To relax the truncation, we introduce the stopping times
\begin{equation}
\tau_N := \inf\setc{ t\geq 0}{\norm{X^{(N)}(t)}{1}\geq N} \label{eq:tauN}
\end{equation}
and set
\begin{equation}
\tau := \lim_{N \to \infty} \tau_N.\label{eq:tau}
\end{equation}
We start with the following preparatory Lemma.
\begin{lem}\label{lem:monL} The stopping times defined in \eqref{eq:tauN} and \eqref{eq:tau} have the following properties:
\begin{enumerate}
 \item For all $k\in\N$ the equality $X^{(N)}(t) = X^{(N+k)}(t)$ holds a.s. for $t \in \llbrak 0, \tau_N \rrbrak$.
  \item The stopping time $\tau$ is strictly positive.
  \end{enumerate}
\end{lem}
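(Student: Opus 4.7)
My plan for part (1) starts from the observation that the cut-off functions $h_N$ and $h_{N+k}$ both equal $1$ on $[0,N]$, so $B_N(u) = B_{N+k}(u) = B(u)$ and $C_N(u) = C_{N+k}(u) = C(u)$ for every $u \in E_1$ with $\norm{u}{1} \le N$. By definition of $\tau_N$ together with the $E_1$-continuity of $X^{(N)}$ given by Theorem~\ref{thm:EUC}, we have $\norm{X^{(N)}(t)}{1} \le N$ on $\llbrak 0,\tau_N\rrbrak$. Consequently $X^{(N)}$ satisfies on that stochastic interval exactly the same mild equation that $X^{(N+k)}$ satisfies globally, namely \eqref{eq:SEEqN} with $N$ replaced by $N+k$ (which by Lemma~\ref{lem:BNCNglobal} has globally Lipschitz coefficients). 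To conclude equality on $\llbrak 0,\tau_N \rrbrak$ I would mimic the uniqueness step from the proof of Theorem~\ref{thm:EUC}: set $Z(t) := X^{(N)}(t\wedge\tau_N) - X^{(N+k)}(t\wedge\tau_N)$, rewrite it in mild form with increments driven only by the Lipschitz differences $B_{N+k}(X^{(N)}) - B_{N+k}(X^{(N+k)})$ and $C_{N+k}(X^{(N)}) - C_{N+k}(X^{(N+k)})$, invoke the semigroup estimate of Lemma~\ref{lem:StHa}, and close the loop via the extended Gronwall Lemma~\ref{lem:gronwall} to obtain $\E{\norm{Z(t)}{1}^{2p}} = 0$ for every $t\ge 0$.

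For part (2), part (1) will immediately give monotonicity of $(\tau_N)_N$: since $X^{(N+1)} = X^{(N)}$ on $\llbrak 0,\tau_N\rrbrak$, the $E_1$-norm of $X^{(N+1)}$ stays below $N < N+1$ up to $\tau_N$, hence $\tau_{N+1} \ge \tau_N$, so $\tau = \lim_N \tau_N = \sup_N \tau_N$ is well-defined. For strict positivity I would work on the events $\Omega_N := \{\norm{X_0}{1} < N\}$. On $\Omega_N$ the $E_1$-continuity of $X^{(N)}$ together with the initial condition $X^{(N)}(0) = X_0$ forces $\tau_N > 0$, hence $\tau \ge \tau_N > 0$. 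Since $X_0 \in L^{2p}(\Omega;E_1)$ is $E_1$-valued almost surely, the events $\Omega_N$ exhaust $\Omega$ up to a $\PP$-null set, and we conclude $\tau > 0$ almost surely.

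The only nontrivial point in this plan, and the one I expect to cost some attention, is the localization in part (1): I must be careful that stopping the integrands at the predictable stopping time $\tau_N$ does not spoil the Bochner/stochastic convolution estimates used in the proof of Theorem~\ref{thm:EUC}. This is routine (the stopped processes $X^{(N)}(\cdot\wedge \tau_N)$, $X^{(N+k)}(\cdot\wedge\tau_N)$ still lie in $\HTp$ for every deterministic $T$, and the stopped stochastic integral has the usual dominated-convergence properties), but it is the single step where some care is needed.
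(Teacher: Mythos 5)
Your proposal matches the paper's proof in both structure and substance: the key observation is that $B_N, C_N$ coincide with $B_{N+k}, C_{N+k}$ (and with $B$, $C$) on the ball of radius $N$, so $X^{(N)}$ solves the $(N+k)$-truncated equation on $\llbrak 0,\tau_N\rrbrak$ and uniqueness for that equation gives part (1), while part (2) follows by continuity on the events $\{\norm{X_0}{1}<N\}$, exactly as in the paper. The one subtlety you flag — justifying the passage to the stopped mild formulation — is precisely what the paper handles by invoking the localization property of the stochastic convolution (citing \cite[Appendix A]{BrzeBeam} and \cite[Lemma 5.1]{weisMaxRegEvEq}) before appealing to the already-established uniqueness from Theorem~\ref{thm:EUC}, rather than re-running the Gronwall estimate directly on $Z(t)$ as you suggest; both routes work, but the paper's keeps the localization technicalities contained in a cited lemma.
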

\begin{proof}
By definition of $h_N$ it holds that $B_{N+1}(X^{(N)}(s)) = B_N(X^{(N)}(s))$ and $C_{N+1}(X^{(N)}(s)) = C_N(X^{(N)}(s))$ for $s \in \llbrak 0, \tau_N \rrbrak $. %Hence, using the localization property of the stochastic integral (cf. \cite[Lemma~4.9]{DPZ}), 
Hence, using the localization property of the stochastic convolution (cf. \cite[Appendix A]{BrzeBeam} and \cite[Lemma 5.1]{weisMaxRegEvEq}), 
  \begin{gather*}
    \begin{split}
      X^{(N)}({t}) 
      &= S_{t} X_0 + \int_0^{t} S_{t-s}B_N(X^{(N)}(s)) \d s + \int_0^{t} S_{t-s} C_N(X^{(N)}(s))\d W(s) \\
      &= S_{t} X_0 + \int_0^{t} S_{t-s}B_{N+1}(X^{(N )}(s)) \d s + \int_0^{t} S_{t-s}C_{N+1}(X^{(N)}(s))\d W(s)
    \end{split}
     \end{gather*}
  on $\llbrak 0, \tau_N \rrbrak$. Now, uniqueness of the truncated solutions yields $X^{(N)}(t) = X^{(N+1)}(t)$ almost surely on $\llbrak 0, \tau_N \rrbrak$. For general $k\in \N$ the argument can be iterated.
  
  To show that $\tau$ is strictly positive, note that it follows from path-wise continuity of $X^{(N)}$ that
  \begin{multline*}
      \PP\left[ \tau_{N} >0 \right] 
      = \lim_{k\rightarrow \infty} \PP\left[ \tau_{N} >\frac1{k}\right]
      = \lim_{k\rightarrow \infty} \PP\left[  \sup_{0\leq t\leq \sfrac1{k}} \norm{X^{(N)}(t)}{1} < N\right]
      = \PP\left[ \norm{X_0}{1} < N\right].
  \end{multline*}
  From the first part of the proof, $(\tau_N)$ is increasing. Hence,
  \begin{multline*}
    \:\qquad\PP\left[ \tau > 0\right] 
    = \lim_{N\rightarrow \infty}  \PP\left[ \tau_{N} >0 \right] 
   = \lim_{N\rightarrow \infty}  \PP\left[ \norm{X_0}{1} < N\right]
   = \PP\left[ \norm{X_0}{1} <\infty\right]
   =1,\qquad
  \end{multline*}
showing positivity of $\tau$.
\end{proof}

\begin{proof}[Proof of Theorem~\ref{thm:EUCloc}]
For $x \in \R_+$ and $t \in \llbrak 0, \tau \llbrak$ we set
  \begin{align}
    X(t,x) :=& \lim_{N\rightarrow \infty} X^{(N)}(t,x), \label{eq:Xm}.
  \end{align}
The limit exists, since for almost every $\omega \in \Omega$ the sequence $(X^{(N)}(\omega; t,x))_{N \in \N}$ is eventually constant for each $t \in \llbrak 0,\tau \llbrak$ by Lemma~\ref{lem:monL}. It follows immediately that $t \mapsto X(t,x)$ is a.s. continuous on each $\llbrak 0,\tau_N \rrbrak $ and hence also continuous on $\llbrak 0,\tau \llbrak$. Moreover, we may now rewrite $\tau_N$ as
\[ \tau_N := \inf\setc{ t\geq 0}{\norm{X(t)}{1}\geq N}.\]
Continuity of $X$ then implies that the sequence $\tau_N$ is in fact \emph{strictly} increasing and hence that $\tau$ is predictable. Moreover, by definition of $\tau$ we have
\[\lim_{t \to \tau} \norm{X(t)}{1} = \lim_{N \to \infty} \norm{X(\tau_N)}{1} = \infty\]
on $\{\tau < \infty\}$.

We focus on the claim that $X$ solves \eqref{eq:seebd}. By Lemma~\ref{lem:monL}  it holds that  $X^{(N)}(t) =  X(t)$ on $t \in \llbrak 0, \tau_N \rrbrak$. Moreover, by construction of $B_N$ and $C_N$ we get $B(X(t)) = B_{N}(X(t))$ and $C(X(t)) = C_N(X(t))$ on $\llbrak 0, \tau_N \rrbrak$. Thus,
  \begin{gather}
    \begin{split}
      &\quad S_t X_0 + \int_0^{t} S_{{t}-s}B(X(s)) \d s + \int_0^{t} S_{{t}-s}C(X)(s))\d W(s) \\
      &= S_{t} X_0 + \int_0^{t} S_{{t}-s}B_{N}(X^{(N)}(s)) \d s + \int_0^{t} S_{{t}-s}C_N(X^{(N)}(s))\d W(s) \\
      &= X^{(N)}({t})\\
      &= X({t})
    \end{split}
   \end{gather}
holds, and $X$ is a mild solution of \eqref{eq:seebd} on $\llbrak 0, \tau_N \rrbrak$. Since $N$ was arbitrary $X$ is a mild solution on $\llbrak 0, \tau \llbrak$ as claimed.
  To show uniqueness, let $\widetilde X$ be another local mild $E_1$-solution of \eqref{eq:seebd} on some stochastic interval $\llbrak 0, \widetilde \tau \llbrak$. For $N\in \N$ we introduce the stopping time
  \[ \widetilde\tau_N := \inf\left\{ t\geq 0\cond \norm{\widetilde{X}(t)}{1} \geq N \text{ or } \norm{\vphantom{\int}X(t)}{1} \geq N\right\}\wedge \widetilde\tau.\]
Clearly, it holds that
  \[ \lim_{N\rightarrow\infty} \widetilde\tau_N = \widetilde\tau\wedge\tau, \qquad a.s.\] 
In addition, for $t \in \llbrak 0, \widetilde \tau_N \rrbrak$ it holds that 
  \[B(\widetilde X(t)) = B_N(\widetilde X(t)),\quad\text{and}\quad C(\widetilde X(t)) = C_N(\widetilde X(t)).\]
 As above, we derive that $\widetilde X$ is a mild solution of the truncated equation on $ \llbrak 0,\widetilde\tau_N \rrbrak$. The path-wise uniqueness claim of Theorem~\ref{thm:EUC} implies $\widetilde X(t) = X(t)$ for all $t \in \llbrak 0,\widetilde\tau_N \rrbrak$ and by arbitrariness of $N$ also for $t \in \llbrak 0, \widetilde \tau \wedge \tau \llbrak$. Assume now that $\tau < \widetilde \tau$ on a set of positive probability. Then $\lim_{t\nearrow\tau} \norm{X(t)}{1} = \infty$ on $\{\tau<\infty\}$ leads to a contradiction to the continuity of $\widetilde X$. Hence $X$ is unique and $\llbrak 0, \tau \llbrak$ is maximal. 
 %  The existence of a continuous modification of $X$ follows, again, from the same localization. Let for fixed $\omega \in \Omega$, outside of some null set, $t<\tau$, and $N\in \N$ such that $t< \tau_N$ and thus for $s<\tau_N$
%  \[ \norm{X(t) - X(s)}{1} = \norm{X^{(N)}(t) - X^{(N)}(s)}{1} \longrightarrow 0, \]
%  when  $\abs{t-s} \rightarrow 0$. Here, $X^{(N)}$ is the continuous modification of the truncated solution.
\end{proof}

\subsection{More global solutions and strong solutions}
The results from above can be easily extended in two directions: First, we show the existence of global solutions under more general conditions, second we show that all mild solutions obtained in this section are in fact strong solutions.

\begin{cor}[More global solutions]\label{cor:LGglobal} Let the assumptions of Theorem~\ref{thm:EUCloc} hold true, but with the local growth condition \eqref{eq:BDlg} replaced by the global growth condition \eqref{eq:LG}. Then the solution $X$ is global, i.e. $\tau = \infty$ a.s.
\end{cor}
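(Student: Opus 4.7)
The plan is to bootstrap the existence of a local solution $X$ on $\llbrak 0,\tau\llbrak$ from Theorem~\ref{thm:EUCloc} to a global one by showing that the blow-up stopping times $\tau_N$ defined in \eqref{eq:tauN} satisfy $\P{\tau_N\leq T}\to 0$ as $N\to\infty$ for every $T>0$. Since $\tau_N\nearrow \tau$ a.s., this immediately gives $\P{\tau\leq T}=0$ for every $T>0$ and hence $\tau=\infty$ a.s.

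The key observation is that, while the local Lipschitz constants for $B_N$ and $C_N$ in general blow up with $N$, the \emph{growth} constants do not: since $0\leq h_N\leq 1$, the global growth bound \eqref{eq:LG} for $B$ and $C$ is inherited by $B_N$ and $C_N$ with the same constant $\hat M$, independent of $N$. Combined with the remark following Lemma~\ref{lem:EsupX} (which emphasises that the Lipschitz hypothesis is not needed there), this will give a uniform in $N$ control of $X^{(N)}$.

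Concretely, I would first recall from Theorem~\ref{thm:EUC} and Lemma~\ref{lem:BNCNglobal} that $X^{(N)}$ is a global mild solution of the truncated equation \eqref{eq:SEEqN}, and in particular satisfies the integrability hypothesis $\E{\int_0^T\norm{X^{(N)}(t)}{1}^{2p}\d t}<\infty$ required for Lemma~\ref{lem:EsupX}. Applying that lemma with the $N$-independent growth constant $\hat M$ then yields a constant $K_{p,T}$, depending only on $p$, $T$, $\hat M$ and the semigroup parameters, such that
\begin{equation*}
    \E{\sup_{0\leq t\leq T}\norm{X^{(N)}(t)}{1}^{2p}} \leq K_{p,T}\left(1+\E{\norm{X_0}{1}^{2p}}\right) \qquad \text{for every } N\in\N.
\end{equation*}
By Lemma~\ref{lem:monL} we have $X(t)=X^{(N)}(t)$ on $\llbrak 0,\tau_N\rrbrak$, and by $E_1$-continuity $\norm{X(\tau_N)}{1}=N$ on $\{\tau_N<\infty\}$, so $\{\tau_N\leq T\}\subseteq\{\sup_{0\leq t\leq T}\norm{X^{(N)}(t)}{1}\geq N\}$.

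Finally, Markov's inequality gives
\begin{equation*}
    \P{\tau_N \leq T} \leq N^{-2p}\, \E{\sup_{0\leq t\leq T}\norm{X^{(N)}(t)}{1}^{2p}} \leq N^{-2p}\, K_{p,T}\left(1+\E{\norm{X_0}{1}^{2p}}\right),
\end{equation*}
which tends to $0$ as $N\to\infty$. Since $\{\tau\leq T\}=\bigcap_{N}\{\tau_N\leq T\}$, this forces $\P{\tau\leq T}=0$ for every $T>0$, and hence $\tau=\infty$ a.s. The main (and essentially only) obstacle is to check that the constant in Lemma~\ref{lem:EsupX} can really be chosen independent of $N$ when applied to $X^{(N)}$; this is where it matters that $h_N$ takes values in $[0,1]$, so that the truncation cannot inflate the global growth constant, and that Lemma~\ref{lem:EsupX} does not invoke the Lipschitz constants.
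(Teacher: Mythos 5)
Your argument is correct and is essentially the same as the paper's: both rely on the observation that $0\le h_N\le 1$ means $B_N,C_N$ inherit the global growth bound \eqref{eq:LG} with an $N$-independent constant, then apply Lemma~\ref{lem:EsupX} to $X^{(N)}$ to get a uniform-in-$N$ moment bound, and finally combine with $\{\tau_N\leq T\}\subseteq\{\sup_{t\leq T}\norm{X^{(N)}(t)}{1}\geq N\}$ and Chebyshev/Markov. The only cosmetic difference is that you conclude directly from $\P{\tau_N\le T}\to 0$ and $\{\tau\le T\}=\bigcap_N\{\tau_N\le T\}$, whereas the paper phrases the same Markov estimate as a contradiction argument; the content is identical.
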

\begin{proof}
By Theorem~\ref{thm:EUCloc} we know that a local solution $X$ to \eqref{eq:seebd} exists on a maximal stochastic interval $\llbrak 0, \tau \llbrak$ and that $\lim_{t \to \tau} \norm{X(t)}{1} = \infty$ on $\{\tau < \infty\}$. Moreover, we know that the stopping time $\tau$ is the limit of a sequence of stopping times $\tau_N < \tau$, that $X$ coincides with the (global) solution $X^N$ of the truncated equation \eqref{eq:SEEqN} on the stochastic interval $\llbrak 0,  \tau_N \rrbrak$ and that $\norm{X^N(\tau_N)}{1} \ge N$ for all $N \in \N$. Finally, observe that the coefficients $B_N, C_N$ of the truncated equation satisfy the same growth bound as the coefficients of the original equation, i.e. 
\begin{equation}\label{eq:growth_bound_repeat}
\norm{B_N(Y)}{\alpha}  + \norm{C_N(Y)}{\HS(U,E_1)} \le M \left(1 + \norm{Y}{1}\right), \qquad \text{for all $N \in \N$}
\end{equation}
with $M$ independent of $N$.\\
If $\{\tau < \infty\}$ has measure zero, then $\tau  = \infty$ a.s. and the proof is finished. Therefore assume, aiming for a contradiction,  that $\P{\tau < \infty} = 2\epsilon > 0$. By monotone convergence it follows that there exists $T > 0$ such that $\P{\tau < T} \ge \epsilon$ from which it follows that also $\P{\tau_N \le T} \ge \epsilon$
for all $N \in \N$. Hence
\begin{equation}\label{eq:upper_bound_XN}
\E{\sup_{0 \le t \le T} \norm{X^N(t)}{1}^{2p}} \ge N^{2p} \P{\tau_N \le T} \ge N^{2p} \epsilon
\end{equation}
holds. On the other hand, applying the growth bound \eqref{eq:growth_bound_repeat} and Lemma~\ref{lem:EsupX} to each $X_N$ it follows that 
\[ \E{\sup_{0 \le t \le T} \norm{X^N(t)}{1}^{2p}}  \le K_{p,T} \left(1 + \E{\norm{X_0}{0}^{2p}}\right),\]
with the right hand side independent of $N$. Combining with \eqref{eq:upper_bound_XN} and choosing $N$ large enough, the desired contradiction is obtained.
\end{proof}

\begin{cor}[Strong solution]
  \label{thm:eus}
  Under the assumptions of Theorem~\ref{thm:EUC} and Theorem~\ref{thm:EUCloc} respectively, the processes $X^{(N)}$, $N\in\N$ and $X$ are also the unique strong solution of respectively~\eqref{eq:SEEqN} and~\eqref{eq:seebd}. 
\end{cor}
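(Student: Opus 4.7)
The plan is to reduce, via the localization from the proof of Theorem~\ref{thm:EUCloc}, to showing that each truncated mild solution $X^{(N)}$ of~\eqref{eq:SEEqN} is a strong solution. Since $X = X^{(N)}$ on $\llbrak 0, \tau_N \rrbrak$ and since $B = B_N$, $C = C_N$ on this interval, once each $X^{(N)}$ solves~\eqref{eq:SEEqN} in the strong sense one obtains from Definition~\ref{df:strongsolution} a strong $\DA$-solution of~\eqref{eq:seebd} on every $\llbrak 0, \tau_N \rrbrak$; passing to the limit along the increasing sequence $\tau_N \nearrow \tau$ then yields a strong solution on $\llbrak 0, \tau \llbrak$. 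This lets us work under the global Lipschitz and linear growth bounds provided by Lemma~\ref{lem:BNCNglobal}.

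The main computation starts from the mild formulation
\[
X^{(N)}(t) = S_t X_0 + \int_0^t S_{t-s} B_N(X^{(N)}(s))\,ds + \int_0^t S_{t-s} C_N(X^{(N)}(s))\,dW_s
\]
and computes $\int_0^t A X^{(N)}(s)\, ds$ by substituting this expression for $X^{(N)}(s)$ and swapping the order of integration. The semigroup identity $S_u h - h = \int_0^u A S_r h\, dr$ for $h \in \DA$, together with deterministic and stochastic Fubini, formally yields the three identities
\[
\int_0^t A S_s X_0\, ds = S_t X_0 - X_0,\qquad \int_r^t A S_{s-r}\xi\, ds = (S_{t-r}-I)\xi,
\]
used in the second and third terms with $\xi = B_N(X^{(N)}(r))$ (deterministic Fubini) and $\xi = C_N(X^{(N)}(r))$ (stochastic Fubini, e.g.\ \cite[Thm.~4.33]{dPZinf}). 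Substituting back and cancelling, the sum telescopes to $X^{(N)}(t) - X_0 - \int_0^t B_N(X^{(N)}(s))\,ds - \int_0^t C_N(X^{(N)}(s))\,dW_s$, which is precisely the strong formulation. Integrability of $A S_{s-r} B_N(X^{(N)}(r))$ in $s$ follows from Lemma~\ref{lem:StHa} applied with $\beta = 0$ and $\alpha \in (0,1]$, giving $\|A S_u h\|_0 \leq K u^{\alpha-1}\|h\|_{\alpha}$ which is integrable at $0$.

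The main obstacle is the justification of the stochastic Fubini step together with the closedness of $A$: one needs $(r,s)\mapsto A S_{s-r} C_N(X^{(N)}(r))$ to lie in the appropriate $L^2$-space of $\HS(U,E_0)$-valued predictable processes. This is where the Lipschitz condition on $C$ into $\HS(U,E_1)$ (rather than only into a space of weaker regularity) becomes essential: since $C_N$ takes values in $\HS(U, E_1)$, the operator $A S_{s-r} \colon E_1 \to E_0$ is uniformly bounded on $s-r \in [0,T]$, so the double integrability reduces to the moment estimate~\eqref{eq:XX02p} combined with the uniform bound~\eqref{eq:BNCNbd} on $C_N$. Uniqueness of the strong solution is then automatic: any strong solution is also mild (by applying the variation-of-constants formula in reverse via $A$), and mild solutions are unique by Theorems~\ref{thm:EUC} and~\ref{thm:EUCloc}.
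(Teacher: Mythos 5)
Your proof is correct, but it takes a genuinely different route from the paper. The paper's proof proceeds indirectly: after extending $B_N$ and $C_N$ from $E_1$ to $E$ (via Kuratowski's theorem $\borel{E_1} = \borel{E}\cap E_1$ and the embedding $\HS(U,E_1)\hookrightarrow\HS(U,E)$), it places the truncated equation inside the abstract framework of Pr\'ev\^ot--R\"ockner~\cite[Appendix~F]{roecknerprevot} and verifies the integrability and measurability hypotheses needed to pass mild $\to$ weak $\to$ strong; uniqueness is handled by the reverse chain strong $\to$ $E$-mild $\to$ $E_1$-mild, again using boundedness of the truncated coefficients. You instead compute $\int_0^t A X^{(N)}(s)\,ds$ directly from the mild formula by pulling $A$ inside (closedness plus the estimate $\norm{AS_u h}{0}\le K u^{\alpha-1}\norm{h}{\alpha}$ from Lemma~\ref{lem:StHa}, i.e.\ the lemma with indices $(1,\alpha)$) and swapping integration orders, using the identity $\int_0^u AS_v h\,dv = S_u h - h$ for $h\in E_\alpha$ with $\alpha>0$. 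Your approach avoids the Kuratowski extension step entirely, since the whole computation stays in $E_1$, and it makes the role of the hypothesis $C_N\colon E_1\to\HS(U,E_1)$ transparent: $AS_{s-r}$ is uniformly bounded from $E_1$ to $E_0$, so stochastic Fubini applies with no singularity. The trade-off is that you must carry out the (deterministic and stochastic) Fubini swaps and the telescoping cancellation by hand, where the paper can cite a packaged result. Your uniqueness sketch ("a strong solution is also mild") is correct but slightly compressed: it helps to note explicitly, as the paper does, that a strong solution is $\DA$-valued by Definition~\ref{df:strongsolution}, so $C_N(X(\cdot))\in\HS(U,E_1)$ and the resulting mild formulation is an $E_1$-mild (not merely $E$-mild) solution, to which Theorem~\ref{thm:EUC} applies.
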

\begin{proof}
Let $(X,\tau)$ be the unique mild solution from Theorem~\ref{thm:EUCloc}, and $X^{(N)}$, $\tau_N$ and respectively $B_N$ and $C_N$ be the solution, stopping times and parameters of the truncated equation~\eqref{eq:SEEqN}, corresponding to~\eqref{eq:seebd}. By the natural embedding we can identify the $E_1$-paths of $X^{(N)}$ with paths in $E$. Further, Kuratowski's theorem (cf. \cite{kuratowski}) implies for the corresponding Borel $\sigma$-algebras that $\borel{E_1} = \borel{E}\cap E_1$ and hence that we can extend $B_N$ trivially to a Borel function on $E$ without affecting the regularity properties that $B_N$ has on $E_1$. Writing down the Hilbert-Schmidt norm one immediately observes that also
\[ \HS(U, E_1) \hookrightarrow \HS(U, E).\]
Both are seperable Hilbert spaces so that we can argue in the same way to extend $C_N$ to a Borel function from $E$ into $\HS(U, E)$. The stochastic evolution equations now fit in the framework of \cite[Appendix F]{roecknerprevot}, where sufficient conditions for obtaining weak and strong solutions from mild ones are given. Proving the corollary now simply amounts to showing that these conditions are satisfied.%% Maybe in introduciton: For the proofs the reader is referred to~\cite{knochefrieler}

First, recall~\eqref{eq:EintBCfinite} which yields for all $T\geq 0$ that
\[\int_0^T \norm{B_N(X^{(N)}(s)}{} + \norm{C_N(X^{(N)}(s)}{\HS(U;E)} \d s < \infty,\]
$\PP$-almost surely and from equivalence of the norms of $E_1$ and $\dom(A)$ we get a.\,s.
\[ \int_0^T \norm{X^{(N)}(s)}{} + \norm{AX^{(N)}(s)}{} \d s< \infty.\]
%%%%%%%%%%%%%%%%%%%%%%%%%%%%%%%%%%%%%%%%%%%%%%%%%%%%%%%%%%%%%%%%%%%
For the step from mild to weak solutions we also need to verify for all  $g\in \dom(A^*)$
\begin{equation}
  \int_0^T\EE\int_0^t  \norm{\scal{S_tC_N(X^{(N)}(s))}{A^*g}{E}}{\HS(U,\R)}^2 \d s \d t < \infty.    \label{eq:m2wC}
\end{equation}
Recall that $A$ generates a strongly continuous semigroup on $E$. Using Cauchy-Schwarz inequality we then get for any complete orthonormal system $(e_k)$ of $U$ and $0\leq s\leq t$
  \begin{gather*}
    \begin{split}
      \norm{\scal{S_{t-s}\cC_N(X^{(N)}(s))}{A^*g}{E}}{\HS(U,\R)}^2 
      &= \sum_{k=1}^\infty \abs{\scal{S_{t-s}C_N(X^{(N)}(s))e_k}{A^*g}{E}}^2\\
      &\leq  \norm{S_{t-s}C_N(X^{(N)}(s))}{\HS(U; E)}^2 \norm{A^*g}{E}^2\\
      &\leq  \norm{C_N(X^{(N)}(s))}{\HS(U,E)}^2 \norm{A^*g}{E}^2.
    \end{split}
  \end{gather*}
  From~\eqref{eq:EintBCfinite} (or boundedness of $C_N$) it follows that~\eqref{eq:m2wC} indeed holds true. 
  As we have seen in the proof of the continuity part of Theorem~\ref{thm:EUC}, 
  \[ t\mapsto \int_0^t S_{t-s} B_N(X^{(N)}(s))\d s,\quad\text{and}\quad t\mapsto \int_0^tS_{t-s} C_N(X^{(N)}(s)) \d W_s\]
  are continuous, and therefore predictable.
  Hence,  all integrability and measurability assumptions which are needed to apply~\cite[Prop. F.0.4 and F.0.5]{roecknerprevot} are satisfied and we conclude that the mild solution $X^{(N)}$ is also a weak and strong solution of the truncated equation \eqref{eq:SEEqN}. Applying the the same localization argument as above, but now on $E$, the result translates to $X$. 
  
For the uniqueness claim, suppose first that $Y^{(N)}$ is another global strong solution of the truncated equation~\eqref{eq:SEEqN}. Applying the results in~\cite[Appendix F]{roecknerprevot} we get that $Y^{(N)}$ is also an $E$-mild solution. Since $B_N$ and $C_N$ are bounded, we get that
  \[\int_0^t \norm{S_{t-s}B_N(Y^{(N)})(s)}{E_1} + \norm{S_{t-s}C_N(Y^{(N)}(s))}{\HS(U;E_1)}^2\d s <\infty\]
  almost surely, so that $Y^{(N)}$ is even an $E_1$-mild solution. Hence, the uniqueness part of Theorem~\ref{thm:EUC} yields $Y^{(N)} = X^{(N)}$ almost surely. 
  Finally, for another local strong solution $(Y, \varsigma)$, truncating with respect to the $E_1$-norm yields $Y = X$ on $\llbrak 0, \varsigma \wedge \tau \llbrak$, almost surely. Since $\llbrak 0, \tau\llbrak$ is maximal for the $E_1$-mild solution, we obtain that $\varsigma \leq \tau$ almost surely.
 % By pathwise uniqueness, $Y$  Now, using Lemma~\ref{lem:monL} and the same arguments as in the proof of Theorem~\ref{thm:eum}, we can again relax the truncation and get that $(X,\tau)$ is a local weak and the unique maximal strong solution of~\eqref{eq:SEEq} on $\L^2$. Finally, note that obviously every strong, resp. weak, solution of~\eqref{eq:SEEqt} also solves~\eqref{eq:SEEq} and vice versa. 
\end{proof} 

%%% Local Variables: 
%%% mode: latex
%%% TeX-master: "0paper"
%%% End: 
%% (Wong-Zakai type approx.)
%% (Positivity) -- ggf. eigene Section

\section{The fixed boundary problem as a stochastic evolution equation}
%% proofs of the main results (local ex. and uniqueness, global solutions etc
\label{sec:proofs} 
The goal of this section is to reformulate the free boundary problem \eqref{eq:fbp} as the abstract evolution equation \eqref{eq:seebd}. We start start by identifying the appropriate function spaces and introduce
\begin{align*}
  \L^2 &:= L^2(\R_+) \oplus L^2(\R_+) \oplus \R,\qquad \H^k:= H^k(\R_+)\oplus H^k(\R_+) \oplus \R,%\\
%  \text{and} \quad \BUC^1 &:= BUC^1(\R_+) \oplus BUC^1(\R_+) \oplus \R
  \end{align*}
 where, as usual, $L^2$ denotes the Lebesgue space, $H^k$ the $k$-th order Sobolev space. % and $BUC^1$ the space of functions with bounded uniformly continuous first derivative.
 Recall that $\oplus$ denotes the direct sum of Hilbert spaces, i.e. the scalar product on $\L^2$ is defined through the scalar product on $L^2(\R_+)$ by
 \[\scal{(u_1,u_2,x)}{(v_1,v_2,y)}{\L^2} = \scal{u_1}{v_1}{L^2(\R_+)} + \scal{u_2}{v_2}{L^2(\R_+)} + xy,\]
 and similarly for the space $\H^k$.% and the norm on $\BUC^1$.
 
 Recall the definitions of the operators $\cA$, $\cB$, $\cC$ that were given in equations~\eqref{eq:coefA}, \eqref{eq:coefB}, \eqref{eq:coefC} in terms of $\mu_\pm$, $\sigma_\pm$, $\rho$ and $\zeta$. To define the domain $\DA$ of $\cA$ we set 
\[ D := H^2(\R_+) \cap H^1_0(\R_+), \quad \text{and} \quad \DA = D \times D \times \R \subseteq \H^2.\]
Finally, the space $\dom(\cA)$ shall be equipped with the graph norm
 \[ \norm{u}{\cA} := \norm{u}{\L^2} +\norm{\cA u}{\L^2},\;\quad u\in \dom(\cA).\]
 Note that on $\DA$, the graph norm is equivalent to the $\H^2$-norm, as can be seen from integration by parts and the Cauchy-Schwarz inequality. Moreover, $\DA$ is a closed subset of $\H^2$.\\
 
 To provide the connection with the results of section~\ref{sec:seeqs}, we set
 \[E_0 = \L^2, \qquad E_1 = \DA \subseteq \H^2.\]
 As the norms of $\DA$ and $\H^2$ are equivalent we may use either one to topologize $E_1$.
 The following result holds true for interpolation between $\DA$ and $\L^2$:
 \begin{lem}
   For $\alpha \in (0,1/4)$ it holds that,
   \[E_\alpha = \H^{2\alpha},\]
   with equivalent norms.
 \end{lem}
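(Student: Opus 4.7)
The plan is to exploit the block-diagonal structure of $\cA$ to reduce the claim to a statement about the shifted Dirichlet Laplacian on $L^2(\R_+)$. Writing $\cA = \mathrm{diag}(\eta_+\Delta_D - c,\, \eta_-\Delta_D - c,\, -c)$, where $\Delta_D$ denotes the Dirichlet Laplacian on $L^2(\R_+)$ with domain $D = H^2(\R_+)\cap H^1_0(\R_+)$, the operator $-\cA$ is self-adjoint and strictly positive on $E_0 = \L^2$, and its fractional powers split as $(-\cA)^\alpha = \mathrm{diag}(A_+^\alpha, A_-^\alpha, c^\alpha)$, where $A_\pm := -\eta_\pm \Delta_D + c$. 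The third component is trivial since $A_3 = c\cdot\mathrm{id}$ yields $\dom(A_3^\alpha) = \R$. Hence the statement reduces to showing that
\[ \dom(A_\pm^\alpha) = H^{2\alpha}(\R_+), \qquad \alpha\in(0,\tfrac14),\]
with equivalent norms, since then summing the norms over the three components yields $E_\alpha = \H^{2\alpha}$.

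The next step is to identify $\dom(A_\pm^\alpha)$ with a complex interpolation space. Since $A_\pm$ is self-adjoint and positive on $L^2(\R_+)$, it has bounded imaginary powers (in fact a bounded $H^\infty$-calculus), and a standard result from interpolation theory (see e.g. \cite{lunardiInterpol}) gives
\[ \dom(A_\pm^\alpha) = [L^2(\R_+),\, D]_\alpha \]
with equivalent norms. The scaling by $\eta_\pm$ and the shift by $c$ do not affect this identification.

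The last and technical key step is the classical interpolation identity
\[ [L^2(\R_+),\, H^2(\R_+)\cap H^1_0(\R_+)]_\alpha \;=\; H^{2\alpha}(\R_+), \qquad \alpha\in(0,\tfrac14), \]
with equivalent norms. This is the point where the restriction $\alpha < 1/4$ enters: the boundary condition encoded in $H^1_0(\R_+)$ becomes invisible in the interpolation scale precisely when $2\alpha < 1/2$, i.e.\ below the Sobolev trace threshold. A convenient way to see this is via odd reflection $E: u \mapsto \tilde u$, which gives an isometric isomorphism between $L^2(\R_+)$ and the subspace of odd functions in $L^2(\R)$, and which also maps $D$ isomorphically onto the odd functions in $H^2(\R)$ (the vanishing trace at $0$ is exactly what makes the odd reflection land in $H^2(\R)$). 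Under this identification $A_\pm$ is unitarily equivalent to the shifted full-line Laplacian $-\eta_\pm \partial_{xx} + c$ restricted to odd functions, whose fractional-power domain is computed by Fourier analysis to be $H^{2\alpha}(\R)$, again restricted to odd functions. For $\alpha < 1/4$, restriction to $\R_+$ and odd extension back to $\R$ give mutually inverse topological isomorphisms between the odd part of $H^{2\alpha}(\R)$ and $H^{2\alpha}(\R_+)$, since no trace at $0$ can be defined in this range. Alternatively, the interpolation identity can be quoted directly from standard references on interpolation of Sobolev spaces on a half-space.

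The main obstacle is the last interpolation identity, which requires care at the endpoint $\alpha = 1/4$; for $\alpha$ strictly below this threshold, however, the identification is classical and the equivalence constants depend only on $\alpha$, $\eta_\pm$ and $c$. Combining the three steps yields $E_\alpha = \H^{2\alpha}$ with equivalent norms, as claimed.
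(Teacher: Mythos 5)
Your proof is correct and follows essentially the same route as the paper: reduce by the block-diagonal structure of $\cA$ to the scalar shifted Dirichlet Laplacian, then invoke the identity $\dom((c-\Delta_D)^\alpha) = H^{2\alpha}(\R_+)$ for $\alpha < 1/4$. The paper simply cites this scalar result (to Grisvard and to Lions--Magenes, Ch.~1, Thm.~11.6) rather than proving it; your odd-reflection/Fourier derivation is a correct and standard way to establish the same identity.
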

 This follows from the fact that $\dom((c-\Delta)^\alpha) = H^{2\alpha}(\R_+)$ for the Dirichlet Laplacian $\Delta$, iff $\alpha<\sfrac14$. To our knowledge, this was shown first in~\cite{grisvardComm}, but see also~\cite[Ch.1 , Thm. 11.6]{lionsmagenes1}. By the structure of $\DA$ and  $\L^2$, this directly lifts to $E_\alpha$. This result can be understood in the sense that the boundary conditions, which distinguish $\DA$ from $\H^2$ `are lost' during interpolation exactly at $\alpha = 1/4$. 

\begin{lem}
  The operator $\cA$, defined in \eqref{eq:coefA}, satisfies Assumption~\ref{a:A}.
\end{lem}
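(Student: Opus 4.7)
The plan is to verify the three requirements of Assumption~\ref{a:A} (dense domain and sectoriality, $[0,\infty)$ in the resolvent set, and the resolvent bound) by reducing everything to the well-known properties of the one-dimensional Dirichlet Laplacian, together with the fact that block-diagonal structure preserves all of these properties.

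First, I would recall that the Dirichlet Laplacian $\Delta$ on $\R_+$ with domain $D = H^2(\R_+) \cap H^1_0(\R_+)$ is a negative self-adjoint operator on $L^2(\R_+)$; this is classical (e.g.\ \cite[Ch.~3]{lunardiAnalytic}) and can be seen directly by integration by parts, which yields $\scal{\Delta u}{u}{L^2(\R_+)} = -\norm{\ddx u}{L^2(\R_+)}^2 \le 0$. Hence $\Delta$ is densely defined, sectorial (of angle zero), its spectrum is contained in $(-\infty, 0]$, and $\norm{R(\lambda, \Delta)}{} \leq 1/\lambda$ for $\lambda > 0$. Multiplying by $\eta_\pm > 0$ preserves all of these properties. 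The trivial operator $0$ on the finite-dimensional summand $\R$ is, of course, sectorial with spectrum $\{0\}$ and resolvent $R(\lambda, 0) = 1/\lambda$ on $(0,\infty)$.

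Next, I would observe that $\cA$ has block-diagonal form acting independently on the three summands of $\L^2 = L^2(\R_+) \oplus L^2(\R_+) \oplus \R$, with common domain $\DA = D \times D \times \R$ which is dense in $\L^2$. For such a direct sum the resolvent is the direct sum of the resolvents on the three summands, hence the operator norm of $R(\lambda, \cA)$ equals the maximum of the three component norms. Consequently, the block-diagonal part before the shift has spectrum in $(-\infty, 0]$, is sectorial, and satisfies $\norm{R(\lambda, \cdot)}{} \leq 1/\lambda$ for $\lambda > 0$.

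The final step is the shift by $-c\,\textrm{id}$, which simply translates the spectrum by $-c$. Thus the spectrum of $\cA$ lies in $(-\infty,-c] \subset (-\infty, 0)$, and in particular $[0,\infty)$ belongs to the resolvent set. For $\lambda \geq 0$ we have $R(\lambda, \cA) = R(\lambda + c, \cA + c\,\textrm{id})$, which combined with the bound above yields
\[
\norm{R(\lambda, \cA)}{} \;\leq\; \frac{1}{\lambda + c}\,.
\]
Since $(1+\lambda)/(\lambda + c)$ is bounded by $M := \max(1, 1/c)$ on $[0,\infty)$, the resolvent estimate \eqref{eq:resolvent_eq} follows. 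Sectoriality of $\cA$ is inherited from the block-diagonal structure (with angle adjusted to account for the shift, which only enlarges the resolvent sector). This completes all the requirements of Assumption~\ref{a:A}; no step here is expected to be an obstacle, the whole argument being essentially a bookkeeping exercise on top of standard facts about the Dirichlet Laplacian.
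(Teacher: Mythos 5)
Your argument is correct and uses essentially the same ingredients as the paper: self-adjointness of the Dirichlet Laplacian, the block-diagonal structure of $\cA$, and the shift by $-c$ to move the spectrum strictly into the negative half-line. The only real difference is presentational: you derive the resolvent bound $\norm{R(\lambda,\cA)}{} \le 1/(\lambda+c)$ directly from the resolvent of the Dirichlet Laplacian and the direct-sum structure, whereas the paper computes the quadratic form $-\scal{\cA u}{u}{\L^2} \ge c\norm{u}{\L^2}^2$ once on the full space, concludes that $-\cA$ is positive in the sense of Lunardi, and then invokes \cite[Lem.~4.31]{lunardiInterpol} to get the resolvent estimate in one shot. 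Both deliver the same constant $M = \max(1, 1/c)$; your version is marginally more elementary and self-contained, the paper's is marginally shorter by leaning on the quoted lemma. Either is acceptable.
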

\begin{proof}The Dirichlet Laplacian on $L^2(\R_+)$ is a self-adjoint operator. This property is inherited by $\cA$, which is hence a self-adjoint operator % % on its domain $\DA$
  . Moreover, 
\[-\scal{\cA u}{u}{\L^2} = \norm{\ddx u_1}{L^2(\R_
+)}^2 + \norm{\ddx u_2}{L^2(\R_
+)}^2 + c \norm{u}{\L^2}^2 \ge c \norm{u}{\L^2}^2\]
for all $u \in \DA$ and hence $-\cA$ is \emph{positive} in the sense of \cite{lunardiInterpol}. By \cite[Lem.~4.31]{lunardiInterpol}  it follows that $(-c, \infty)$ is contained in the resolvent set of $\cA$ and the resolvent satisfies
\[\norm{R(\lambda,A)}{\L^2} \le \frac{1}{c + \lambda}, \qquad \lambda > -c.\]
Choosing $M > \max(1,1/c)$, the estimate \eqref{eq:resolvent_eq} follows and Assumption~\ref{a:A} is satisfied.
\end{proof}

\begin{lem}\label{lem:assB}
Suppose that $\mu_{\pm}$, $\sigma_{\pm}$, $\rho$ and $\zeta$ satisfy Assumptions~\ref{a:mu},\ref{a:sigma}, \ref{a:rho} and \ref{a:zeta}. Then the operators $\cB(u)$ and $\cC(u)$, defined in \eqref{eq:coefB} and \eqref{eq:coefC} satisfy the local Lipschitz Assumption~\ref{a:SEElocal}.
\end{lem}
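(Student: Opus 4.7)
The plan is to verify Assumption~\ref{a:SEElocal} componentwise for $\cB$ and $\cC$, working in the concrete setting $E_1 = \DA \subseteq \H^2$ and $E_\alpha \cong \H^{2\alpha}$ for some small $\alpha \in (0, 1/4)$. I will reduce the nonlinearities to compositions and products of Nemytskii-type operators, traces, and multiplication maps, and invoke the regularity results collected in Appendix~\ref{A:nem} and~\ref{A:noise}. The structure of the bound \eqref{eq:BDlip} is additive over the three components of $\cB$ and of $\cC$, so it suffices to handle each term separately and then sum up the local Lipschitz constants.

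For $\cB$: I would first observe that for $u\in\DA$, each $\ddx u_j$ lies in $H^1(\R_+)$, so the trace map $u\mapsto \cI(u)=(\ddx u_1(0+), \ddx u_2(0+))\in\R^2$ is bounded linear and hence globally Lipschitz on $E_1$. Composing with the locally Lipschitz $\varrho$ (Assumption~\ref{a:rho}) gives a scalar, locally Lipschitz map $u\mapsto \varrho(\cI(u))$. Multiplying this scalar against $\ddx u_j\in H^1\hookrightarrow H^{2\alpha}$ remains in $H^{2\alpha}$ and is locally Lipschitz in $u$ by a standard Banach algebra / product estimate in $H^1$. For the Nemytskii part $u\mapsto \mu_\pm(\cdot, u, \ddx u)$ I would appeal directly to Appendix~\ref{A:nem}, where Assumption~\ref{a:mu} is shown to translate into local Lipschitz continuity from $\H^2$ into $\H^{2\alpha}$; the growth/Lipschitz bounds in Assumption~\ref{a:mu}~\ref{ai:mugrowths}\ref{ai:mulip} are tuned precisely to this end. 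Adding the bounded linear term $c\,\textrm{id}$ contributes trivially, so we obtain the required local Lipschitz property of $\cB:E_1\to E_\alpha$.

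For $\cC$: The delicate point is that $\cC(u)w$ must land in $\DA$, not merely $\H^2$, i.e.\ the Dirichlet boundary condition must hold at $x=0$. Using $u_j(0)=0$ for $u\in\DA$ and $\sigma_\pm(0,0)=0$ from Assumption~\ref{a:sigma}~\ref{ai:sigmabc} we obtain
\[ (\cC(u)w)_j(0) \;=\; \sigma_\pm\bigl(0,u_j(0)\bigr)\,T_\zeta w(u_3) \;=\; 0. \]
The $H^2$-regularity of each component factors through the regularity of $x\mapsto T_\zeta w(u_3 \pm x)$, which by Assumption~\ref{a:zeta} has uniform $L^2$-bounds for derivatives up to order $3$ in $u_3$, together with the Nemytskii action of $\sigma_\pm$ on $H^2$ via Assumption~\ref{a:sigma}. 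The Hilbert--Schmidt bound on $\cC(u):U\to E_1$ together with the local Lipschitz estimate in $u$ is then obtained from the analysis in Appendix~\ref{A:noise}; the third component being $0$ is inert.

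The main obstacle I expect is not any single component but organising the argument for the drift term $\ddx u_j \cdot \varrho(\cI(u))$: one needs to treat $\varrho(\cI(u))$ as a scalar multiplier (so that no Sobolev regularity in $x$ is lost) while still controlling it by the $H^2$-norm of $u$ through the trace inequality. A secondary but genuine difficulty is matching the growth/Lipschitz conditions of Assumption~\ref{a:mu}~\ref{ai:mugrowths} and Assumption~\ref{a:sigma}~\ref{ai:sigmagrowths} with the integrability needed for the Nemytskii operator at fractional regularity $2\alpha$ and for the Hilbert--Schmidt norm; these are precisely what the appendices are set up to handle, so the body of the proof essentially consists of collecting and summing the estimates outlined above.
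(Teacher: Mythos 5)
Your proposal matches the paper's own proof in structure and in all essential ideas: decompose $\cB$ into the Nemytskii term $\cB_\mu$, the boundary--advection term $\cB_\rho(u)=\varrho(\cI(u))\bar\nabla u$, and the trivial $c\,\mathrm{id}$; treat $\cI$ as a bounded trace operator composed with the locally Lipschitz $\varrho$; use the scalar product estimate to get local Lipschitz continuity of $\cB_\rho$ into $\H^1\hookrightarrow\H^{2\alpha}$; delegate $\cB_\mu$ to the Nemytskii results of Appendix~\ref{A:nem} and $\cC$ to Appendix~\ref{A:noise}; and verify that $\sigma_\pm(0,0)=0$ together with the Dirichlet condition on $u$ makes $\cC(u)w$ land in $\DA$. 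One small point worth being precise about: Theorem~\ref{thm:Nlip} actually gives $H^m\to H^m$ local Lipschitz continuity, so $\cB_\mu$ maps $\DA$ locally Lipschitz into $\H^1$ (a full extra Sobolev order, not just into $\H^{2\alpha}$), and one then embeds $\H^1\hookrightarrow\H^{2\alpha}$ as in the paper — this is what makes the choice of $\alpha<1/4$ available.
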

\begin{proof}
We decompose $\cB(u)$ into
\begin{equation}
  \label{eq:Bdecomp}
  \cB(u) := \cB_\mu(u) + \cB_\rho(u) + c u, \quad u\in \DA,
\end{equation}
where \begin{equation*}
  \cB_\mu(u) = \begin{pmatrix}
    \mu_+(.,u_1,  \ddx u_1) \\
    \mu_-(- .,u_2,  \ddx u_2) \\
    0
  \end{pmatrix}, \quad \cB_\rho(u) = \rho(\cI(u)) \bar\nabla u \quad \text{and} \quad \bar\nabla u := \begin{pmatrix}
    \ddx u_1  \\
    - \ddx u_2 \\
    1
  \end{pmatrix}
\end{equation*} 
Recall from \eqref{eq:I} that $\cI(u)$ is the vector of boundary values given by $\cI(u) = \left(\ddx u_1(t,0+), \ddx u_2(t,0+)\right)$. 
%For better notation we also introduce for functions $\psi\in L^\infty_{loc}(G;\R)$, $G\subset \R^d$, the semi-norm 
%\begin{equation}
%  \norm{\psi}{\infty,N} := \sup_{\abs{x}\leq N} \abs{\psi(x)}.\label{eq:locnorm}
%\end{equation}
%
%In order to verify Assumption~\ref{a:SEElocal} for $\cB_1$, we will show that $\cB:\DA \to \H^1$ is Lipschitz and bounded on bounded sets. Indeed, because the additional term in $\cB_1$ is linear and bounded we then get that $\cB_1:\DA\to \H^1$ has these properties. Then, note that $E_1=\DA=\dom(\cA_1)$ with equivalent norms. Moreover, denoting by $(H,\DA)_{\alpha,p}$ the complex interpolation spaces for $\alpha \in (0,1)$, $p\in [1,\infty)$, we get from~\cite{grisvard} that for all $\alpha \in (0,\sfrac14)$ holds
%    \[ (H, \DA)_{\alpha,2} = \H^{2\alpha}, \]
%    Since $-\cA_1$ is a self-adjoint positive operator we can apply \cite[Thm 1.18.10]{triebel} to get
%    \[ (H, \DA)_{\alpha,2}  = \dom((-\cA_1)^\alpha) = E_\alpha.\]
%    Finally, this yields $\H^1\hookrightarrow E_\alpha$ for all $\alpha < \sfrac14$ which then gives that $\cB_1: E_1 \to E_\alpha$ satisfies Assumption~\ref{a:SEElocal}. 
The trace operator is known to be continuous on $H^{2}(\R_+)$, see~\cite{lionsmagenes1}, and hence, by equivalence of norms we have that $\cI\in L(\H^2, \R^2)$ with operator norm $K_{\cI}$, say. Denote by $B_{N+1}$ the (closed) ball of radius $N+1$ in $\H^2$. The image of $B_{N+1}$ under $\cI$ is closed and bounded, hence a compact subset of $\R^2$.      
        By Assumption~\ref{a:rho}, the function $\rho: \R^2 \mapsto \R$ is locally Lipschitz continuous and hence Lipschitz on any compact subset of $\R^2$. Thus, we find a constant $L_{\rho,N}$ such that
        \[|\rho(\cI(u)) - \rho(\cI(w))| \le L_{\rho,N} |\cI(u) - \cI(w)| = L_{\rho,N} K_{\cI} \norm{u-w}{\H^2}, \quad \forall\,u,w \in B_{N +
        1}.\]
        By a similar argument and using only continuity of $\rho$ instead of the Lipschitz property, we find $M_{\rho,N}$ such that
        \[\sup_{u \in B_{N+1}} |\rho(\cI(u))| \le M_{\rho,N} K_{\cI}.\]
                
        Finally, for any $u, w \in B_{N + 1}$ and setting $L_N = K_\cI (M_{\rho,N} + (N+1)L_{\rho,N})$ we obtain        
        \begin{align*}
        \norm{\rho(\cI(u)) \bar\nabla u - \rho(\cI(w)) \bar\nabla w}{\H^1} &\le |\rho(\cI(u))| \norm{\bar\nabla u - \bar\nabla w}{\H^1} + \norm{\bar\nabla u}{\H^1} |\rho(\cI(u)) - \rho(\cI(w))| \le \\
        & \le M_{\rho,N} K_{\cI} \norm{u-w}{\H^2} + (N+1) L_{\rho,N} K_{\cI} \norm{u-w}{\H^2} = \\
        &= L_N \norm{u-w}{\H^2}.
        \end{align*}
	Now let $\alpha < 1/4$. By the continuous embedding $\H^1 \hookrightarrow \H^{2\alpha}$ and the equivalence of norms on $\H^2$ and  $\DA$ we may find $K_\alpha$ such that also
	\[\norm{\rho(\cI(u)) \bar\nabla u - \rho(\cI(w)) \bar\nabla w}{\H^{2\alpha}} \le K_\alpha L_N \norm{u-w}{\DA}.\]
	Recalling that $\H^{2\alpha} = E_\alpha$ and $\DA = E_1$, this yields that $\cB_\rho$ is Lipschitz as a mapping from bounded subsets of $E_1$ to $E_\alpha$.

	It remains to show the same properties for $\cB_\mu$ and for $\cC$, which we delay to Appendix~\ref{A:nem} and \ref{A:noise}, respectively. \qedhere
	
%We delay the discussion for $\cC$ to Appendix~\ref{A:noise}, especially Theorem~\ref{thm:CLip}.
    % We first show that $C$ is Lipschitz continuous on bounded sets. Then, of course $C$ is also bounded on bounded sets. To this end mean value theorem gives for all $u$, $\tilde u\in \DA$ with norm smaller than $N\in \N$
  %   \begin{equation*}
  %     \norm{C(u) - C(v)}{\HS(U, \DA)} \leq \sup_{\norm{\tilde u}{A}\leq N} \norm{DC(\tilde u)}{L(\DA, \HS(U,\DA))} \norm{u-v}{A}.
  %   \end{equation*}
  %   Lemma~\ref{lem:HSest} and Theorem~\ref{thm:DC} reduce the problem to finding a bound for
  %   \begin{equation*}
  %     \sup_{\norm{\tilde u}{A}\leq N} \norm{ N_\sigma (u)}{\DA}.
  %   \end{equation*}
  %   The computation is similar as for $\Phi$, but quiet more involved. 
 \end{proof}
 In particular, with the results from section~\ref{sec:seeqs} we get a unique solution $X = (u_1,u_2, x_*)$ of~\eqref{eq:SEEq} on the maximal interval $\llbrak 0,\tau\llbrak$. Applying the following lemma to Corollary~\ref{cor:LGglobal} yields global existence, under global growth assumptions.

\begin{lem}\label{cor:BLG}
 Suppose that in addition to the assumptions of Lemma~\ref{lem:assB} also Assumption~\ref{a:mslg} holds and $\rho$ is bounded. Then the operators $\cB$ and $\cC$ satisfy the global growth bound \eqref{eq:LG}. 
\end{lem}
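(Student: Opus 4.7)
The plan is to re-examine the decomposition
\[\cB(u) = \cB_\mu(u) + \cB_\rho(u) + c\,u\]
introduced in Lemma~\ref{lem:assB} and to bound each summand, together with $\cC(u)$, linearly in $\norm{u}{E_1}$. The term $c\,u$ trivially satisfies $\norm{c\,u}{\alpha}\le cK\norm{u}{1}$ by the continuous embedding $E_1\hookrightarrow E_\alpha$, and for the $\rho$--term boundedness of $\rho$ gives
\[\norm{\cB_\rho(u)}{\H^{2\alpha}} \;\le\; K_\alpha\norm{\rho}{\infty}\norm{\bar\nabla u}{\H^1} \;\le\; K_\alpha\norm{\rho}{\infty}\norm{u}{\H^2},\]
which yields a global linear bound after invoking the equivalence of the $\H^2$-- and $\DA$--norms on $\DA$.

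For $\cB_\mu$ the strategy is to revisit the Nemytskii-type estimate of Appendix~\ref{A:nem}: a careful inspection shows that the constants appearing in the local bound $\norm{\cB_\mu(u)}{E_\alpha}\le M_N(1+\norm{u}{E_1})$ depend only on $\norm{a}{L^2}$ and on the $L^\infty$--norm of $b,\tilde b$ over the range of $(u,\ddx u)$. Under Assumption~\ref{a:mslg} the functions $b,\tilde b$ are globally bounded, so $M_N$ can be replaced by a constant $M$ independent of $N$, which gives the desired global bound.

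The main work concerns $\cC$. The key structural input is the affine decomposition $\sigma_\pm(x,y) = \sigma_\pm^1(x) + \sigma_\pm^2(x)\,y$ from Assumption~\ref{a:mslg}. Splitting $\cC(u) = \cC^1 + \cC^2(u)$ accordingly, the bounds of Appendix~\ref{A:noise}, propagated through up to two spatial derivatives of $x\mapsto\sigma_\pm^j(x)\,(T_\zeta w)(u_3\pm x)$, show that $\cC^1$ does not depend on $u$ and has Hilbert--Schmidt norm controlled by $\norm{\sigma_\pm^1}{H^2(\R_+)}$ and the uniform-in-$x$ $L^2$--norms of $\zeta^{(i)}$, $i\le 2$, while $\cC^2(u)$ is \emph{linear} in $u$ with operator norm bounded by a constant depending on $\norm{\sigma_\pm^2}{BUC^2}$ and the same bounds for $\zeta$. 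Summing the two contributions yields $\norm{\cC(u)}{\HS(U,E_1)}\le \hat M(1+\norm{u}{1})$.

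The principal obstacle is this last step: one has to redo the Hilbert--Schmidt computation of Appendix~\ref{A:noise} under the affine structure and verify that every prefactor arising after differentiating twice in $x$ and summing over an orthonormal basis of $U$ reduces to a \emph{global} constant built from $\norm{\sigma_\pm^1}{H^2}$, $\norm{\sigma_\pm^2}{BUC^2}$, $\norm{\rho}{\infty}$, $\norm{b}{\infty}$, $\norm{\tilde b}{\infty}$ and the bounds on $\zeta^{(i)}$ supplied by Assumption~\ref{a:zeta}, with no hidden dependence on a truncation level $N$. Once this bookkeeping is carried out, the three linear bounds assemble into~\eqref{eq:LG}.
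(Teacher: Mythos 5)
Your proposal follows the paper's proof almost verbatim: the same decomposition $\cB = \cB_\mu + \cB_\rho + c\,u$, the same observation that Assumption~\ref{a:mslg} makes the Nemytskii constants global, and the same use of the affine structure of $\sigma_\pm$ together with Lemma~\ref{lem:HSest} for $\cC$. One small imprecision: $\cC^1$ is not quite independent of $u$, since the shift by $u_3$ still enters through $(T_\zeta w)(u_3 \pm x)$; the correct observation, implicit in Lemma~\ref{lem:HSest} and the remark following it, is that the Hilbert--Schmidt bound is uniform in the shift, because $\theta_{u_3}\circ T_\zeta = T_{\zeta_{u_3}}$ with $\zeta_{u_3}$ still satisfying Assumption~\ref{a:zeta}.
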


\begin{proof}Decompose $\cB$ into $\cB_\mu$ and $\cB_\rho$ as in the proof of Lemma~\ref{lem:assB}. Using Assumption~\ref{a:mslg} we get the point-wise estimate
    \[\abs{B_\mu(u)_i} \leq \left( a + b(u_i, \ddx u_i)) (\abs{u_i}+\abs{\ddx u_i})\right),\qquad i\in \{1,2\}\]
    where $a\in L^2(\R_+)$ and $b \in L^\infty(\R)$. Taking $\L^2$-norms yields 
   \begin{equation}
          \norm{B_\mu(u)}{\L^2} \leq 2 \left( \norm{a}{L^2}+ \norm{b}{\infty} \norm{u}{\H^1}\right). \label{eq:bdPhi}
    \end{equation}
    For the first weak derivative we extract from the proof of Theorem~\ref{thm:Ncont} (cf. Eq.~\eqref{eq:DN}.) 
    \begin{equation}
      \begin{split}
        \tfrac{\d}{\d x} \mu_+(x, u_1(x), \ddx u_1(x)) = & \ddx \mu_+(x, u_1(x), \ddx u_1(x)) \\
        &+ \ddy \mu_+(x,u_1(x), \ddx u_1(x)) \ddx u_1(x) \\
        &+ \ddz \mu_+(x,u_1(x), \ddx u_1(x)) \ddxx u_1(x),\;x\in \R_+.
      \end{split}
    \end{equation}
    
    Whereas the first summand admits a bound similar to~\eqref{eq:bdPhi}, part (b) of Assumption~\ref{a:mslg} yields for some $\tilde b\in L^\infty(\R^2;\R)$,
    \[ \abs{\ddy \mu_+(x,u_1(x), \ddx u_1(x)) \ddx u_1(x)} \leq \tilde b(u_1(x),\ddx u_2(x)) \abs{\ddx u_1}\]
    and 
    \[ \abs{\ddz \mu_+(x,u_1(x), \ddx u_1(x)) \ddxx u_1(x)} \leq \tilde b(u_1(x),\ddx u_2(x)) \abs{\ddxx u_1}.\]
    Of course, the same holds for $\mu_-$ and $u_2$ so that we can summarize
    \begin{equation}
      \norm{\tfrac{\d}{\d x} \cB_\mu(u)}{\L^2} \leq 2 \left( \norm{a}{L^2}+ \norm{b}{\infty} \norm{u}{\H^1}\right) + 2 \norm{\tilde b}{\infty} \norm{u}{\H^2}.\label{eq:bdDPhi}
    \end{equation}
    Collecting all the estimates and using $\H^2 \hookrightarrow \H^1$ we get a constant $M_\mu$, depending on $a$, $b$ and $\tilde b$ only, such that 
    \begin{equation}
      \norm{\cB_\mu(u)}{\H^1} \leq  M_\mu \left(1 + \norm{u}{\H^2}\right).\label{eq:BLGlocal}
    \end{equation}
Using the assumption that $\rho$ is bounded by a constant $M_\rho$ we easily estimate
	\[\norm{\cB_\rho(u)}{\H^1} = \norm{\rho(\cI(u)) \bar\nabla u}{\H^1} \le M_\rho \norm{u}{\H^2}.\]
Combining with \eqref{eq:BLGlocal} we obtain 
	\[\norm{\cB(u)}{\H^1} \le (M_\mu + M_\rho + c) \left(1 + \norm{u}{\H^2}\right).\]
	Using the continuous embedding $\H^1 \hookrightarrow \H^{2\alpha}$ and the equivalence of norms on $\cH^2$ and  $\DA$ as in the proof of Lemma~\ref{lem:assB} yields the global growth bound~\eqref{eq:LG} for $\cB$.

        To show the analogous growth bound for $\cC$, observe that the following equalities hold for all $x \in \R_+$ and $u\in \DA \subset \H^2$:
        \begin{align*}
          \ddx \sigma_+(x, u_1(x)) &= \ddx \sigma_+^1(x) + \ddx \sigma_+^2(x)u_1(x) + \sigma_+^2(x) \ddx u_1(x)\\
          \ddxx \sigma_+(x, u_1(x)) &= \ddxx \sigma_+^1(x) + \ddxx \sigma_+^2(x)u_1(x)\\
          &\qquad\qquad\qquad+2\ddx\sigma_+^2(x)\ddx u_1(x) + \sigma_+^2(x) \ddxx u_1(x).
        \end{align*}
        Hence, there exists a constant $K>0$ such that
        \begin{equation*}
          \norm{\sigma_+(.,u_1(.))}{H^2(\R_+)} \leq \norm{\sigma_+^1}{H^2(\R_+)} + K \norm{\sigma_+^2}{C^2(\R_+)} \norm{u_1}{H^2(\R_+)}.
        \end{equation*}
        We apply the same argument to $\sigma_-(-.,u_2(.))$ to obtain
        \begin{equation}
          \norm{\begin{pmatrix} \sigma_+(.,u_1(.))\\ \sigma_-(-.,u_2(.)) \\ 0 \end{pmatrix}}{\cA} \leq K_\sigma \left( 1+ \norm{u}{\cA}\right),\label{eq:nsigmaLG}
        \end{equation}
        for a constant $K_\sigma$, depending on $\sigma_+$ and $\sigma_-$ only. By Assumption~\ref{a:sigma} $\sigma_{\pm}(.,u_1(.))$ satisfies Dirichlet boundary conditions at $0$ and we may apply Lemma~\ref{lem:HSest} to obtain
        \begin{equation*}
          \norm{\cC(u)}{\HS(U;\DA)} \leq K\left(\sup_{x\in\R} \sum_{i=1}^2\norm{\zeta^{(i)}(x,.)}{L^2}\right) \norm{\begin{pmatrix} \sigma_+(.,u_1(.))\\ \sigma_-(-.,u_2(.)) \\ 0 \end{pmatrix}}{\cA},
        \end{equation*}
        which together with~\eqref{eq:nsigmaLG} yields the desired linear growth bound. 
\end{proof}

%%% Stopping
Finally, we prove a refined result on the blow-up behavior of the solution $X(t)$ at the stopping time $\tau$. We show that under Assumption~\ref{a:mslg} a finite-time blow-up of $X(t)$ can only happen if the boundary values $\cI(X(t))$ themselves blow up. For $N\in \N$ we introduce the stopping times 
\begin{align*}
  \tau_{\circ,N} &:= \inf\setc{t>0}{ t < \tau, \abs{\cI(X(t))} \geq N}.
\end{align*}
and set
\[\tau_\circ = \limsup_{N \to \infty} \tau_{\circ,N}.\]
Recall at this point the convention that $\inf \emptyset = \infty$. From Theorem~\ref{thm:EUCloc} we know that
\begin{equation}
  \label{eq:explA}
  \lim_{t\nearrow \tau} \norm{X(t)}{\cA} = \infty,\quad\text{a.\,s. on}\;\{\tau <\infty\}.
\end{equation}
Since a blow up of $\abs{\cI(X(t))}$ implies a blow-up of the norm $\norm{X(t)}{\cH^2}$, and hence also of $\norm{X(t)}{\cA}$ we obtain that $\tau \le \tau_\circ$ a.s. and hence only two events are possible: Either
\begin{itemize}
\item $\tau = \tau_\circ$, i.e. a blow-up of $X(t)$ coincides with the blow up of the boundary values $\cI(X(t))$, or
\item $\tau < \infty$, but $\tau_\circ = +\infty$, i.e. a blow-up of $X(t)$ occurs \emph{without} simultaneous blow-up of its boundary values. 
\end{itemize} 
The following theorem shows that  Assumption~\ref{a:mslg} rules out the second case:
 \begin{thm}\label{th:tau0eqtau}
   If in addition to the assumptions of Lemma~\ref{lem:assB} also Assumption~\ref{a:mslg} holds, then
   \[ \P{\tau_\circ = \tau} = 1.\]
 \end{thm}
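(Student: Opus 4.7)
The plan is to establish $\tau_\circ \le \tau$ a.s., the reverse inequality being immediate since a blow-up of $|\cI(X(t))|$ entails a blow-up of $\norm{X(t)}{\cA}$. Since the stopping times $\tau_{\circ,N}$ are monotone in $N$, we have $\{\tau < \tau_\circ\} = \bigcup_N \{\tau < \tau_{\circ,N}\}$, so it suffices to show $\P{A_N} = 0$ for every fixed $N$, where $A_N := \{\tau < \tau_{\circ,N}\} \cap \{\tau < \infty\}$.

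The key idea is to truncate the non-linear boundary coefficient $\rho$ so as to fall into the scope of Corollary~\ref{cor:LGglobal}. For each $N$ I would pick a locally Lipschitz and globally bounded $\rho_N : \R^2 \to \R$ that agrees with $\rho$ on the closed ball $\{|y| \le N\}$ (for instance, $\rho_N(y) := \rho(y)$ for $|y| \le N$ and $\rho_N(y) := \rho(Ny/|y|)$ otherwise). Denote by $\cB^{(N)}$ the operator~\eqref{eq:coefB} with $\rho_N$ in place of $\rho$. The local Lipschitz argument in Lemma~\ref{lem:assB} applies verbatim (since $\rho_N$ is still locally Lipschitz), and the proof of Lemma~\ref{cor:BLG} now goes through with $\rho_N$ playing the role previously played by the bounded $\rho$; so the triple $(\cA, \cB^{(N)}, \cC)$ satisfies Assumption~\ref{a:SEElocal} together with the global growth bound~\eqref{eq:LG}. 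Corollary~\ref{cor:LGglobal} then yields a unique global strong solution $X^{(N)}$ of the modified equation with $\cA$-continuous paths on $[0,\infty)$ almost surely.

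For $t \in \llbrak 0, \tau_{\circ,N} \wedge \tau \llbrak$ the definition of $\tau_{\circ,N}$ forces $|\cI(X(t))| < N$, hence $\rho(\cI(X(t))) = \rho_N(\cI(X(t)))$, and therefore $X$ itself is a local mild solution of the modified equation on that stochastic interval. The uniqueness assertion in Theorem~\ref{thm:EUCloc}, applied to the modified equation whose global solution is $X^{(N)}$, gives $X(t) = X^{(N)}(t)$ on $\llbrak 0, \tau_{\circ,N} \wedge \tau \llbrak$. On the event $A_N$ this identification persists up to the finite time $\tau(\omega)$, so $\cA$-continuity of $X^{(N)}$ on the compact interval $[0,\tau(\omega)]$ yields
\[ \sup_{0 \le t < \tau(\omega)} \norm{X(\omega;t)}{\cA} \;\le\; \sup_{0 \le t \le \tau(\omega)} \norm{X^{(N)}(\omega;t)}{\cA} \;<\; \infty, \]
in direct contradiction with~\eqref{eq:explA}. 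Hence $\P{A_N} = 0$ for every $N$, completing the plan. The one delicate point is ensuring that $\cB^{(N)}$ actually satisfies the global growth bound~\eqref{eq:LG}: this is precisely the step of Lemma~\ref{cor:BLG} where the boundedness of the boundary coefficient is essential, which is why the truncation must replace $\rho$ by a \emph{bounded} function and not merely preserve local Lipschitz continuity.
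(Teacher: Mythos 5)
Your proposal is correct and follows essentially the same route as the paper: truncate $\rho$ to a bounded, locally Lipschitz $\rho_N$ agreeing with $\rho$ on the ball of radius $N$, invoke the global-existence machinery (the paper packages Lemma~\ref{cor:BLG} and Corollary~\ref{cor:LGglobal} into Theorem~\ref{thm:globalsol}, which it then cites directly) to obtain a global $\cA$-continuous solution $X^{(N)}$, observe that $X=X^{(N)}$ on $\llbrak 0,\tau_{\circ,N}\wedge\tau\llbrak$ since the coefficients coincide while $|\cI(X(t))|<N$, and derive a contradiction with the blow-up~\eqref{eq:explA}. The only cosmetic differences are the choice of truncation (the paper multiplies by the cutoff $h_N$, you use a radial retraction) and the bookkeeping (you isolate the event $A_N$, the paper directly shows the $\cA$-norm stays finite up to $\tau_{\circ,N}\wedge\tau$); these are interchangeable.
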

 \begin{proof}
   Because of $\tau_\circ\geq \tau$ and maximality of $\tau$,~\eqref{eq:explA}, it suffices to show
  \begin{equation}
    \label{eq:32}
    \lim_{t\nearrow \tau_{\circ,N} \wedge \tau} \norm{X(t)}{\cA} <\infty,\qquad\text{on }\{\tau <\infty\}.
  \end{equation}
  Indeed, this yields $\tau_{\circ,N}<\tau$ on $\{\tau<\infty\}$ and thus $\tau_{\circ}\leq \tau$ almost surely.
  For $N\in \N$, let $h_N:\R\to\R$ be the truncation function defined in~\eqref{eq:truncfct} and define
  \[\rho_N(x,y) := \rho(x,y) h_N(\abs{(x,y)}),\quad  (x,y)\in \R^2,\,N\in \N.\]
  Then, due to Theorem~\ref{thm:globalsol}, equation~\eqref{eq:SEEq} with $\rho$ replaced by $\rho_N$ admits a unique global solution denoted by $X_N$. Since $\rho = \rho_N$ on the ball of radius $N$, we have $X(t) = X_N(t)$ on $\llbrak 0,\tau_{\circ,N}\wedge \tau \llbrak$. Since $X_N$ is $\DA$-continuous, we have
  \begin{equation*}
    \lim_{t\nearrow \tau_{\circ,N} \wedge \tau} \norm{X(t)}{\cA} = \lim_{t\nearrow \tau_{\circ,N}\wedge \tau} \norm{X_N(t)}{\cA} <\infty,\quad \text{on }\{\tau<\infty\}
  \end{equation*}
  for all $N \in \NN$ and the proof is complete.
 \end{proof}

\section{Transformation from moving to fixed boundary}
% Back-transformation
\label{sec:trafo}
As the last step towards a complete proof of the main result Theorem~\ref{thm:sfbp} we make the transformation \eqref{eq:trafo} to the fixed-boundary equation \eqref{eq:fbpt} rigorous. Since the equation is stochastic and its solution not differentiable in time, the classic chain rule cannot be applied. As an alternative we could use Ito's formula, which requires the transformation map to be $C^2$. It turns out that the transformation is only $C^1$, but linearity in its first argument and the bounded variation of $x_*(t)$ are in combination sufficient to make a stochastic version of the chain rule work.
\subsection{Stochastic chain rule}
For a given cylindrical Wiener process $W$ on $U$ we consider the $H^1(\RR)\oplus\RR$-continuous process $(v,x)$ on $\llbrak 0,\tau \llbrak$, for a predictable stopping time $\tau$, such that,
\begin{equation}\label{eq:SDEv}
  \left\{\begin{split}
      v_t &= v_0 + \int_0^t \mu_s \d s + \int_0^t \sigma_s \d W_s,\\
      x(t) &= x_0 + \int_0^t \dot x_s \d s,
    \end{split} \right.
\end{equation}
where $\mu: \llbrak 0,\tau \llbrak \rightarrow L^2(\R)$ and $\sigma: \llbrak 0, \tau\llbrak \to \HS(U,L^2(\R))$ are predictable processes, and $ \dot x: \llbrak 0,\tau\llbrak \rightarrow \RR$ is continuous and adapted.

For $x \in \R$ we define the shift operator $\theta_x$ acting on $L^2(\R)$ as
\begin{equation}
\theta_x: L^2(\R) \to L^2(\R); \quad f(.) \mapsto f(x + .) \label{eq:shift}.
\end{equation}
Observe that the shift operator is a linear isometry and hence continuous. It is obvious that the shift operators $(\theta_x)_{x \in \R}$ form a group under composition, i.e. $\theta_x \theta_\xi = \theta_{x + \xi}$; in fact this group is strongly continuous in $L^2(\R)$, in the sense that
\begin{equation}\label{eq:theta_group}
  \lim_{x \to 0} \norm{\theta_x f - f}{L^2} = 0, \qquad \text{for all $f \in L^2(\R)$,}
\end{equation}
see e.\,g.~\cite[Section VII.4]{wernerFunkana}. The same properties hold true for the restriction of $(\theta_x)_{x \in \R}$ to the Sobolev space $H^1(\R)$. Finally, consider the function
\begin{equation}
  \trafo:  L^2(\R_+) \oplus \R \to L^2(\R), \quad (v,x)  \mapsto \theta_x v \label{F}
\end{equation}
which formally transforms the solution $(u,x_*)$ of the fixed boundary problem \eqref{eq:fbpt} into the solution $(F(v,x_*),x_*)$ of the moving boundary problem \eqref{eq:fbp}. 

%We recall the definition of the moving frame $\Gamma(x)$ from \eqref{eq:moving_frame}. Note that every function $v$ in $\Gamma(x)$ is continuously differentiable to the left and to the right of $x$. 

We start with a Lemma on some uniform continuity estimates for the shift operator on $L^2(\R)$. 
\begin{lem}\label{lem:uniform_estimate}
Let $T>0$, $s \mapsto f_s$ and $s \mapsto g_s$ be continuous functions from $[0,T]$ to $L^2(\R)$ and $\HS(U,L^2(\R))$ respectively, and let $\theta_x$ be the shift operator on $L^2(\R)$. Then the following holds:
\begin{enumerate}[label=(\alph*),itemsep=0.5em]
\item For every $\epsilon > 0$ there exists $\delta > 0$ such that
\[\norm{\theta_h f_s - f_t}{L^2(\R)} \le \epsilon\]
for all $|h| \le \delta$ and $s,t \in [0,T]$ with $|s-t| \le \delta$.
\item For every $\epsilon > 0$ there exists $\delta > 0$ such that
\[\norm{\theta_h g_s - g_t}{\HS(U,L^2(\R))} \le \epsilon\]
for all $|h| \le \delta$ and $s,t \in [0,T]$ with $|s-t| \le \delta$.
\end{enumerate}
\end{lem}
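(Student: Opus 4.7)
The plan is a standard triangle-inequality splitting, together with the fact that a strongly continuous group of isometries converges uniformly to the identity on compact sets.

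For part (a), I would split
\[
  \norm{\theta_h f_s - f_t}{L^2(\R)} \;\le\; \norm{\theta_h f_s - \theta_h f_t}{L^2(\R)} + \norm{\theta_h f_t - f_t}{L^2(\R)}.
\]
Since $\theta_h$ is an isometry of $L^2(\R)$, the first term equals $\norm{f_s - f_t}{L^2(\R)}$, and the continuous map $s \mapsto f_s$ on the compact interval $[0,T]$ is uniformly continuous, so it can be made smaller than $\epsilon/2$ for $|s-t|\le\delta_1$. For the second term, the key observation is that the set $K := \{f_t : t\in[0,T]\}$ is the continuous image of a compact set, hence compact in $L^2(\R)$. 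Combined with the strong continuity \eqref{eq:theta_group} and the uniform bound $\norm{\theta_h}{}=1$, this yields uniform convergence $\theta_h \to \operatorname{Id}$ on $K$; that is, there exists $\delta_2>0$ such that $\sup_{f\in K}\norm{\theta_h f - f}{L^2(\R)}\le \epsilon/2$ for $|h|\le\delta_2$. The standard argument covers $K$ by finitely many $L^2$-balls (on which strong continuity at a single point transfers to all points in the ball via the isometry bound) and takes the minimum of the finitely many local $\delta$'s. Setting $\delta := \min(\delta_1,\delta_2)$ proves (a).

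For part (b), the strategy is identical once we verify that $\theta_h$ lifts to a strongly continuous group of isometries on $\HS(U,L^2(\R))$. For any orthonormal basis $(e_k)_{k\in\N}$ of $U$ and any $g\in\HS(U,L^2(\R))$,
\[
  \norm{\theta_h g}{\HS}^{2} \;=\; \sum_{k\in\N}\norm{\theta_h g e_k}{L^2}^{2} \;=\; \sum_{k\in\N}\norm{g e_k}{L^2}^{2} \;=\; \norm{g}{\HS}^{2},
\]
giving the isometry property. For strong continuity at $h=0$, each summand in $\sum_k\norm{(\theta_h-\operatorname{Id})ge_k}{L^2}^2$ tends to $0$ by \eqref{eq:theta_group}, and is dominated by $4\norm{ge_k}{L^2}^2$, which is summable since $g\in\HS$. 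Dominated convergence for series then yields $\norm{\theta_h g - g}{\HS}\to 0$ as $h\to 0$. With these two properties established, the triangle-inequality argument from (a) applies verbatim with the compact set $K' := \{g_t : t\in[0,T]\} \subset \HS(U,L^2(\R))$.

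The only mildly delicate point is the passage from pointwise strong continuity to uniform strong continuity on compact sets, but this is a textbook fact for bounded strongly continuous one-parameter families and follows from a standard finite $\epsilon$-net argument on the compact set. The extension to the Hilbert--Schmidt case rests entirely on the dominated-convergence argument above, which presents no real difficulty given the fixed basis and the summability built into the Hilbert--Schmidt norm.
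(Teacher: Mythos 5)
Your proof is correct. You and the paper rest on the same two pillars — compactness of the image $\{f_t : t\in[0,T]\}$ (resp.\ $\{g_t\}$) and strong continuity of the shift group — but the bookkeeping differs in two ways worth noting. First, your decomposition $\norm{\theta_h f_s - f_t}{} \le \norm{f_s - f_t}{} + \norm{\theta_h f_t - f_t}{}$, using the isometry $\norm{\theta_h(f_s-f_t)}{}=\norm{f_s-f_t}{}$ to collapse the first term, is cleaner than the paper's three-term estimate through an intermediate net point $f_{t_i}$, where the bound $\norm{f_t - f_{t_i}}{}\le\epsilon/3$ is not guaranteed for the same $i$ that controls $\norm{f_s - f_{t_i}}{}$ — a minor gap your separation of uniform continuity in $t$ from uniform convergence of $\theta_h\to\operatorname{Id}$ on the compact image sidesteps entirely. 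Second, for part (b) the paper works concretely: it chooses a finite truncation level $M$ of the Hilbert--Schmidt sum depending on the net, controls the tail by summability, and applies $L^2$-strong continuity to the finitely many vectors $g_{t_i}e_k$, $k\le M$. You instead first prove abstractly (via dominated convergence for series) that $\theta$ lifts to a strongly continuous group of isometries on $\HS(U,L^2(\R))$, after which part (b) is literally part (a) applied in a different Hilbert space. Your route is more conceptual and reusable — it directly yields what the paper only records afterwards as Remark~\ref{rmk:shiftHS} — at the small cost of invoking dominated convergence rather than an explicit finite truncation.
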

\begin{rmk}
  \label{rmk:shiftHS}
  Applying the lemma to the constant function $g_t\equiv g$, $\theta$ can be considered as a strongly continuous group on $\HS(U;L^2(\R))$.
\end{rmk}
\begin{proof}
For claim (a), note that due to the continuity of $s \mapsto f_s$ there exists $N := N(\epsilon) \in \N$ and $t_1, \dotsc, t_N \in [0,T]$ such that
\[\min_{i \in \{1, \dotsc, N\}} \norm{f_s - f_{t_i}}{L^2(\R)} \le \frac{\epsilon}{3} \qquad \forall\,s \in [0,T],\]
i.e., we can find $N$ balls in $L^2(\R)$ around the points $f_{t_i}$ of radius $\frac{\epsilon}{3}$, which cover the whole range of $f_s$.
Moreover, due to the strong continuity of the group $(\theta_x)_{x \in \R}$ there exists $\delta > 0$ such that 
\[\norm{\theta_h f_{t_i} - f_{t_i}}{L^2(\R)} \le \frac{\epsilon}{3} \qquad \forall\,|h| \le \delta, i \in \{1, \dotsc, N\}.\]
The estimate
\begin{align*}
  \norm{\theta_h f_s - f_t}{L^2(\R)} &\le \min_{i \in \{1, \dotsc, N\}} \left\{ \norm{\theta_h f_{t_i} - f_{t_i}}{L^2(\R)} + \norm{f_s - f_{t_i}}{L^2(\R)} + \norm{f_t - f_{t_i}}{L^2(\R)}\right\} \le \\ 
  &\le \frac{\epsilon}{3} + \frac{\epsilon}{3}  + \frac{\epsilon}{3}  = \epsilon
\end{align*}
concludes the proof of the first claim.\\
We show (b) in a similar way; to alleviate notation we denote by $\norm{.}{HS}$ the Hilbert-Schmidt norm $\norm{.}{\HS(U,L^2(\R))}$. Due to the continuity of $s \mapsto g_s$ there exists $N := N(\epsilon) \in \N$ and $t_1, \dotsc, t_N \in [0,T]$ such that
\[\min_{i \in \{1, \dotsc, N\}} \norm{g_s - g_{t_i}}{HS}^2 \le \frac{\epsilon^2}{12}\qquad \forall\,s \in [0,T].\]
Denote by $(e_k)_{k \in \N}$ an arbitrary orthonormal basis of the (separable) Hilbert space $U$. Since $g_s$ is a Hilbert-Schmidt-operator for every $s \in [0,T]$, the series $\norm{g_{t_i}}{HS}^2 = \sum_k \norm{g_{t_i} e_k}{L^2(\R)}^2$ converges for every $i \in \{1, \dotsc, N\}$. Hence, there is $M := M(\epsilon) \in \N$ such that
\[\sum_{k = M+1}^\infty \norm{g_{t_i} e_k}{L^2(\R)}^2 \le \frac{\epsilon^2}{48} \qquad \forall \, i \in \{1, \dotsc, N\}.\]
Moreover, due to the strong continuity of the group $(\theta_x)_{x \in \R}$ there exists $\delta > 0$ such that 
\[\sum_{k=1}^M \norm{\theta_h (g_{t_i} e_k) - g_{t_i}e_k}{L^2(\R)}^2 \le \frac{\epsilon^2}{12}\qquad \forall\, |h| \le \delta,  i \in \{1, \dotsc, N\}.\]
Combining with the previous equation, we obtain
\begin{align*}
\norm{\theta_h g_{t_i} - g_{t_i}}{HS}^2 &= \sum_{k=1}^\infty \norm{\theta_h (g_{t_i} e_k) - g_{t_i}e_k}{L^2(\R)}^2 \le \\
&\le \sum_{k=1}^M \norm{\theta_h (g_{t_i} e_k) - g_{t_i}e_k}{L^2(\R)}^2 + 4 \sum_{k=M+1}^\infty \norm{g_{t_i}e_k}{L^2(\R)}^2 \le \\
& \le \frac{\epsilon^2}{12}  + 4 \frac{\epsilon^2}{48} = \frac{\epsilon^2}{6} \qquad \forall  \, i \in \{1, \dotsc, N\}.
\end{align*}
The estimate
\begin{align*}
\norm{\theta_h g_s - g_t}{HS}^2 &\le 3 \min_{i \in \{1, \dotsc, N\}} \left\{ \norm{\theta_h g_{t_i} - g_{t_i}}{HS}^2 + \norm{g_s -g_{t_i}}{HS}^2 + \norm{g_t - g_{t_i}}{HS}^2\right\} \le \\
&\le 3\left(\frac{\epsilon^2}{6} + \frac{\epsilon^2}{12}  + \frac{\epsilon^2}{12}\right)  = \epsilon^2
\end{align*}
concludes the proof.
\end{proof}

 For notational simplicity, we denote by $v'$ the weak derivative of $v\in H^1(\R)$.

\begin{lem}\label{prop:trafoProp}The transformation $\trafo$ from \eqref{F}, restricted to $H^1(\R)\oplus \R$, has the following properties:
  \begin{enumerate}
  \item \label{trafoPropC}  $F$ is a continuous mapping from $H^1(\R)\oplus \R$ to $H^1(\R)$;
  \item \label{trafoPropC1} $F$ is a continuously differentiable mapping from $H^1(\R)\oplus \R$ to $L^2(\R)$ with Fr\'echet derivative given by
    \begin{equation}\label{eq:frechet}
      D_{(v,x)} F(h,\xi) = \theta_x h+\xi \theta_x v' .
    \end{equation}
    for $x,\xi \in \R$ and $v, h \in H^1(\R)$.
  \end{enumerate}
\end{lem}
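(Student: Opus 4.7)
The plan is to establish (1) by exploiting that $\theta_x$ is an isometry on $H^1(\R)$ and that the shift group is strongly continuous there, and then to tackle (2) by a direct computation of the Fréchet derivative using a Bochner-integral representation of shifts.

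For (1), the splitting
\[
\|\theta_{x_n} v_n - \theta_x v\|_{H^1(\R)} \le \|\theta_{x_n}(v_n - v)\|_{H^1(\R)} + \|(\theta_{x_n} - \theta_x) v\|_{H^1(\R)}
\]
reduces matters to: (a) that $\theta_{x_n}$ has operator norm $1$ on $H^1(\R)$, which is immediate from translation invariance of Lebesgue measure applied to the definition of the $H^1$-norm, and (b) that the shift group is strongly continuous on $H^1(\R)$, which follows from the $L^2$-strong continuity \eqref{eq:theta_group} applied separately to $v$ and to its weak derivative $v'$.

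For (2), the key identity is
\[
F(v+h, x+\xi) - F(v,x) - \theta_x h - \xi\theta_x v' \;=\; \bigl[(\theta_{x+\xi} - \theta_x)v - \xi\theta_x v'\bigr] + (\theta_{x+\xi} - \theta_x) h.
\]
The second bracket is handled by the classical Sobolev estimate $\|\theta_\eta h - h\|_{L^2} \le |\eta|\,\|h'\|_{L^2}$, obtained by writing $h(y+\eta) - h(y) = \int_0^\eta h'(y+s)\,ds$ and applying Cauchy--Schwarz and Fubini. Combined with the isometry of $\theta_x$ on $L^2$, this yields an upper bound of $|\xi|\,\|h\|_{H^1(\R)}$, which divided by $\|(h,\xi)\|_{H^1\oplus\R}$ is $o(1)$. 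The first bracket is rewritten, via absolute continuity of $v \in H^1(\R)$ and a Bochner integral in $L^2(\R)$, as
\[
(\theta_{x+\xi} - \theta_x)v - \xi \theta_x v' \;=\; \int_0^{\xi} \theta_x(\theta_s v' - v')\,ds,
\]
whose $L^2$-norm is majorised by $|\xi|\,\sup_{|s|\le|\xi|}\|\theta_s v' - v'\|_{L^2}$; this supremum vanishes as $\xi \to 0$ by strong continuity of the shift group on $L^2(\R)$. Together these two estimates give Fréchet differentiability at $(v,x)$ with the claimed derivative.

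Continuity of $(v,x) \mapsto D_{(v,x)} F$ in the operator norm of $L(H^1(\R)\oplus\R,\,L^2(\R))$ follows from the same tools. Given $(v_n,x_n) \to (v,x)$ and $(h,\xi)$ with $\|(h,\xi)\|_{H^1\oplus\R} \le 1$,
\[
\|(D_{(v_n,x_n)} F - D_{(v,x)} F)(h,\xi)\|_{L^2} \le \|(\theta_{x_n}-\theta_x)h\|_{L^2} + |\xi|\,\|\theta_{x_n} v_n' - \theta_x v'\|_{L^2}.
\]
The first summand is bounded by $|x_n-x|\,\|h\|_{H^1(\R)} \le |x_n-x|$ by the same classical inequality, and the second by $\|v_n' - v'\|_{L^2} + \|(\theta_{x_n}-\theta_x)v'\|_{L^2}$; both vanish as $n\to\infty$, uniformly in $(h,\xi)$ on the unit ball.

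The only conceptual subtlety is the regularity mismatch: differentiating $F$ in its $x$-argument produces $\theta_x v'$, so $F$ is only $C^1$ as a map into $L^2(\R)$ and \emph{not} as a map into $H^1(\R)$. This loss of one order of smoothness is precisely why a standard Itô formula (which would demand $C^2$) cannot be applied directly to $F$, and explains why the bounded variation of $t\mapsto x_*(t)$ must subsequently be exploited to push through a stochastic chain rule.
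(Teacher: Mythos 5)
Your proof is correct and follows essentially the same route as the paper: part (1) via isometry plus strong continuity of the shift group on $H^1(\R)$, and part (2) via the decomposition of the remainder into a $v$-term (treated by the fundamental theorem of calculus for the Sobolev function) and an $h$-term (treated by the bound $\norm{\theta_\eta h - h}{L^2}\le |\eta|\,\norm{h'}{L^2}$), followed by the analogous triangle-inequality argument for continuity of the derivative. The only cosmetic differences are that you phrase the $v$-remainder as a Bochner integral and bound it by a supremum rather than applying Jensen's inequality and Fubini as the paper does, and that you dispense with the appeal to Lemma~\ref{lem:uniform_estimate} for the continuity of $D_{(v,x)}F$ in favor of a direct triangle-inequality splitting, which is in fact cleaner.
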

\begin{proof}
  For~\eqref{trafoPropC}, we estimate 		
  \begin{align*}
    \norm{\theta_x u - \theta_\xi v}{H^1(\R)} &\le  \norm{\theta_x (u - \theta_{\xi-x} v)}{H^1(\R)} = \norm{u - \theta_{\xi-x} v}{H^1(\R)} \le \\
    & \le \norm{u-v}{H^1(\R)} + \norm{v - \theta_{\xi -x} v}{H^1(\R)},
  \end{align*}
  where by the strong continuity of $\theta_x$ on $H^1(\R)$ the latter term vanishes as $|\xi - x| \to 0$.
  To show~\eqref{trafoPropC1}, we first verify that \eqref{eq:frechet} gives the Fr\'echet derivate of $F$, by estimating
  \begin{align*}
    R^2 &:=  \norm{(F(v+h,x+\xi) - F(v,x) - D_{(v,x)}F(\xi,h)}{L^2(\R)}^2 =  \\
    &= \norm{\theta_{x + \xi}(v + h) - \theta_x v - \xi \theta_x v' - \theta_x h}{L^2(\R)}^2 \le \\ 
    &\le 2  \norm{\theta_{\xi}v - v - \xi v'}{L^2(\R)}^2 + 2  \norm{\theta_{\xi}h - h}{L^2(\R)}^2 = \\
    &= 2 \int_{-\infty}^\infty \left(v(y + \xi) - v(y) - \xi v'(y)\right)^2 \d y + 2 \int_{-\infty}^\infty \left(h(y + \xi) - h(y)\right)^2 \d y.
  \end{align*}
  Applying first the fundamental theorem of calculus and in the second step Jensen's inequality and Fubini's theorem we continue with 
  \begin{align*}
    R^2 &\le 2 \xi^2 \left\{\int_{-\infty}^\infty \left( \int_0^1 \left( v'(y + z \xi) - v'(y)\right) \d z \right)^2 \d y + \int_{-\infty}^\infty \left( \int_0^1 h'(y + z \xi) \d z \right)^2 \d y \right\} \le \\
    & \le 2 \xi^2 \left\{ \int_0^1 \norm{ \theta_{z\xi} v' - v'}{L^2(\R)}^2 \d z + \norm{h'}{L^2(\R)}^2 \right\} \le \\
    &\le o \left(|\xi|^2\right) + 2 \xi^2 \norm{h}{H^1(\R)}^2, 
  \end{align*}
  showing Fr\'echet differentiability of $F$. To show continuous differentiability we estimate
  \begin{align*}
    \norm{D_{(v,x)}F(\xi,h) - D_{w,y}F(\xi,h)}{L^2(\R)} &= \norm{\xi \theta_x v' + \theta_x h - \xi \theta_y w'  - \theta_y h}{L^2(\R)} \le \\
    &\le |\xi| \norm{\theta_{x-y} v' - w'}{L^2(\R)} + \norm{\theta_{x-y}h - h}{L^2(\R)} \le \\
    &\le |\xi| \norm{\theta_{x-y} v' - w'}{L^2(\R)} + |x-y| \norm{h'}{L^2(\R)} \le \\
&\le \left(|\xi| + \norm{h}{H^1(\R)}\right) \max\left(\norm{\theta_{x-y}v' - w'}{L^2(\R)},|x-y|\right).
\end{align*}
Writing $\norm{.}{op}$ for the operator norm from $H^1(\R)\oplus \R$ to $L^2(\R)$, this shows that 
\[\norm{D_{(v,x)}F - D_{(w,y)}F}{op} \le \max\left(\norm{\theta_{x-y}v' - w'}{L^2(\R)},|x-y|\right).\]
The right hand side goes to zero as $ \norm{v-w}{H^1(\R)} +|x-y|\to 0$ by Lemma~\ref{lem:uniform_estimate}, and hence $D_{(v,x)}F$ depends continuously on $(v,x)$.
\end{proof}

\begin{thm}[Stochastic Chain Rule]\label{thm:chain_rule}
Let $(v,x)$ be given by \eqref{eq:SDEv} and set $u_t = F(v_t,x_t) = \theta_{x_t} v_t$. Then $u$ satisfies
\begin{equation}\label{eq:chain_rule}
      u_t = u_0 + \int_0^t \left(\left(\theta_{x_s}\mu_s\right) + \dot x_s (\theta_{x_s} v'_s)  \right) \d s + \int_0^t \left(\theta_{x_s}\sigma_s\right) \d W_s,
\end{equation}
on $\llbrak 0,\tau\llbrak$, where the first integral is an $L^2$-Bochner integral and the second one an $\HS$-stochastic integral.
\end{thm}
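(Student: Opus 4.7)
My plan is to prove \eqref{eq:chain_rule} by a Riemann-sum approximation over a shrinking sequence of partitions, exploiting two special features of $F(v,x) = \theta_x v$: its linearity in $v$ and the absence of quadratic variation in $s \mapsto x_s$. Together these properties compensate for the failure of $F$ to be $C^2$, which would be required for a direct application of the infinite-dimensional It\^o formula.

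I would first localize by a sequence of stopping times $\tau_k \nearrow \tau$ on which $\norm{v_s}{H^1}$, $\abs{x_s}$ and $\abs{\dot x_s}$ are uniformly bounded and $\EE \int_0^{\tau_k} \norm{\sigma_s}{\HS}^2 \d s < \infty$, so that it suffices to prove \eqref{eq:chain_rule} on a deterministic interval $[0,T]$ under these uniform bounds. Given a partition $0 = t_0^n < \dotsb < t_{k_n}^n = T$ with mesh tending to zero, the telescoping identity gives
\[u_T - u_0 = \sum_k A_k^n + \sum_k B_k^n,\]
with $A_k^n := \theta_{x_{t_{k+1}^n}}(v_{t_{k+1}^n} - v_{t_k^n})$ and $B_k^n := (\theta_{x_{t_{k+1}^n}} - \theta_{x_{t_k^n}}) v_{t_k^n}$. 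Since each $\theta_x$ is a bounded (indeed isometric) linear operator on $L^2(\R)$, it can be pulled through the Bochner and stochastic integrals representing the increment of $v$, yielding
\[\sum_k A_k^n = \int_0^T \theta_{x_{\pi_n^+(s)}} \mu_s \d s + \int_0^T \theta_{x_{\pi_n^+(s)}} \sigma_s \d W_s,\]
where $\pi_n^+(s)$ denotes the right endpoint of the partition interval containing $s$. For $B_k^n$, I fix $v_{t_k^n} \in H^1(\R)$: the map $r \mapsto \theta_r v_{t_k^n}$ is then $C^1$ from $\R$ into $L^2(\R)$ with derivative $\theta_r v'_{t_k^n}$, and applying the classical chain rule to the composition with the absolutely continuous path $s \mapsto x_s$ gives
\[B_k^n = \int_{t_k^n}^{t_{k+1}^n} \dot x_s\, \theta_{x_s} v'_{t_k^n} \d s,\]
so that $\sum_k B_k^n = \int_0^T \dot x_s\, \theta_{x_s} v'_{\pi_n^-(s)} \d s$, with $\pi_n^-(s)$ the corresponding left endpoint.

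The last step is to let $|\pi_n| \to 0$. For each $s$, continuity of $x$ gives $x_{\pi_n^\pm(s)} \to x_s$, and strong continuity of the shift group on $L^2(\R)$ (respectively on $\HS(U,L^2(\R))$, cf.\ Remark~\ref{rmk:shiftHS}) yields pointwise-in-$s$ convergence of the three integrands to $\theta_{x_s}\mu_s$, $\theta_{x_s}\sigma_s$ and $\dot x_s\,\theta_{x_s} v'_s$; for the last of these I also invoke $H^1$-continuity of $v$ to obtain $v'_{\pi_n^-(s)} \to v'_s$ in $L^2$. The two Bochner integrals and the $B$-sum converge by dominated convergence, exploiting that $\theta_x$ is an isometry and that the localization supplies uniform majorants. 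The stochastic-integral term is handled via It\^o's isometry
\[\EE \norm{\int_0^T (\theta_{x_{\pi_n^+(s)}} - \theta_{x_s})\sigma_s \d W_s}{L^2}^2 = \EE \int_0^T \norm{(\theta_{x_{\pi_n^+(s)}} - \theta_{x_s})\sigma_s}{\HS}^2 \d s,\]
where a second dominated-convergence argument with majorant $4\norm{\sigma_s}{\HS}^2$ drives the right-hand side to zero.

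The main technical obstacle I anticipate lies precisely in this final convergence step: Lemma~\ref{lem:uniform_estimate} supplies joint uniform continuity of shifts when the integrand is strongly continuous in $s$, but here $\mu$ and $\sigma$ are merely assumed predictable, so uniform convergence is unavailable and one must instead rely on a.s.\ pointwise convergence together with an integrable majorant produced by the localization. Once these limits are secured, passing the limit on the stopping times removes the localization and delivers the claimed identity on $\llbrak 0, \tau \llbrak$.
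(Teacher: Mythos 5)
Your overall strategy is close in spirit to the paper's: both proofs proceed by Riemann-sum approximation along a shrinking partition and exploit linearity of $F$ in $v$ together with bounded variation of $s \mapsto x_s$ to compensate for the failure of $F$ to be $C^2$. Your handling of the $B_k^n$-part via the classical chain rule applied to $s \mapsto \theta_{x_s} v_{t_k}$ (for $v_{t_k}$ fixed) is, if anything, cleaner than the paper's remainder estimate, which uses the fundamental theorem of calculus and Jensen's inequality to bound the Taylor remainder. However, there is a genuine gap in your treatment of $A_k^n$, and it is fatal to the argument as written.

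The decomposition $A_k^n := \theta_{x_{t_{k+1}^n}}\bigl(v_{t_{k+1}^n} - v_{t_k^n}\bigr)$ places the shift at the \emph{right} endpoint of the partition interval. The random operator $\theta_{x_{t_{k+1}^n}}$ is $\F_{t_{k+1}^n}$-measurable, not $\F_{t_k^n}$-measurable, so it is \emph{not} a fixed bounded operator from the point of view of the filtration on $(t_k^n, t_{k+1}^n]$. While it does commute with the Bochner integral (that integral is taken $\omega$-wise), it does \emph{not} commute with the stochastic integral: the identity
\[
\theta_{x_{t_{k+1}^n}} \int_{t_k^n}^{t_{k+1}^n} \sigma_s \, \d W_s
\;=\;
\int_{t_k^n}^{t_{k+1}^n} \theta_{x_{t_{k+1}^n}}\sigma_s \, \d W_s
\]
is false in general, because $\theta_{x_{t_{k+1}^n}}\sigma_s$ is not adapted on $(t_k^n, t_{k+1}^n]$ and the right-hand side is not a well-defined It\^o integral. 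Consequently the claimed rewriting $\sum_k A_k^n = \int_0^T \theta_{x_{\pi_n^+(s)}}\mu_s\,\d s + \int_0^T \theta_{x_{\pi_n^+(s)}}\sigma_s\,\d W_s$ does not hold, and your subsequent appeal to the It\^o isometry for $\int_0^T (\theta_{x_{\pi_n^+(s)}} - \theta_{x_s})\sigma_s\,\d W_s$ is also unjustified, since the isometry requires an adapted integrand.

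The repair is to put the shift at the \emph{left} endpoint: decompose instead as $A_k^n := \theta_{x_{t_k^n}}\bigl(v_{t_{k+1}^n} - v_{t_k^n}\bigr)$ and $B_k^n := \bigl(\theta_{x_{t_{k+1}^n}} - \theta_{x_{t_k^n}}\bigr) v_{t_{k+1}^n}$. Then $\theta_{x_{t_k^n}}$ is $\F_{t_k^n}$-measurable and pulls through both integrals legitimately, the resulting simple integrand $\theta_{x_{\pi_n^-(s)}}\sigma_s$ is adapted, and convergence of the stochastic integrals follows (the paper uses \cite[Prop.~4.31]{dPZinf}, convergence in probability under $L^2$-convergence of integrands, rather than the isometry, but with the left-endpoint version either route works). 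Your chain-rule treatment of $B_k^n$ carries over unchanged to $v_{t_{k+1}^n}$. This left-endpoint decomposition is in effect what the paper's expansion via the Fr\'echet derivative $D_{(v_s,x_s)} F$ achieves, since its linear part $\theta_{x_s} v_{s,t} + x_{s,t}\theta_{x_s}v_s'$ is evaluated at the left endpoint throughout.
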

\begin{proof} Without loss of generality assume $\tau = \infty$ almost surely. Else, take an announcing sequence of stopping times $(\tau_N)$ for $\tau$. Then, multiply $\dot x$, $\mu_t$ and $\sigma_t$ with the indicator function $\1_{\llbrak 0,\tau_N\rrbrak}$and replace resp. $x_*$ and $v$ by $x_{.\wedge \tau_N}$ and $v_{.\wedge \tau_N}$. In this proof we use the shorthand notation $x_{s,t} = x_t - x_s$, $v_{s,t} = v_t - v_s$ etc.{}
For $\cP$ an arbitrary partition of $[0,T]$ we decompose
\begin{align*}
F(v_T,x_T) - F(v_0,x_0) &= \sum_{[s,t] \in \cP} F(v_t,x_t) - F(v_s, x_s) = \\
&=  \sum_{[s,t] \in \cP} D_{(x_s,v_s)} F(v_{s,t}, x_{s,t}) + R(v_s,x_s,v_{s,t},x_{s,t}).
\end{align*}
where $DF$ is the Fr\'echet derivative of $F$ from \eqref{eq:frechet} and $R$ is a remainder term.
Using equation \eqref{eq:SDEv} and the explicit forms of $F$ and $DF$ we rewrite
\begin{align*}
D_{(v_s,x_s)} F(v_{s,t}, x_{s,t}) &=  \theta_{x_s} v_{s,t}+x_{s,t} \theta_{x_s} v'_s = \\
&= \int_s^t \left(\theta_{x_s} \mu_r \right) \d r + \int_s^t \dot x_r \left(\theta_{x_s} v'_s\right)  \d r  + \int_s^t \left(\theta_{x_s} \sigma_r\right) \d W_r,\\
R(v_s,x_s,v_{s,t},x_{s,t}) &= \theta_{x_t} v_t - \theta_{x_s} v_t - x_{s,t} \left(\theta_{x_s} v'_s \right).
\end{align*}
The formula~\eqref{eq:chain_rule} follows, if we can show each of the following (a.s.) convergence statements
\begin{align}
\label{eq:shift_convergence} \sum_{[s,t] \in \cP_n} \int_s^t \dot x_r \left(\theta_{x_s} v'_s\right)  \d r &\quad \to \quad \int_0^T \dot x_r \left(\theta_{x_r} v'_r\right) \d r\\
\label{eq:mu_convergence} \sum_{[s,t] \in \cP_n} \int_s^t \left(\theta_{x_s} \mu_r \right)  \d r &\quad \to \quad \int_0^T \left(\theta_{x_r} \mu_r \right) \d r\\
\label{eq:remainder_convergence} \sum_{[s,t] \in \cP_n} \left(\theta_{x_t} v_t - \theta_{x_s} v_s - x_{s,t} \left(\theta_{x_s} v'_s \right)\right) & \quad \to \quad 0\\
\intertext{and}
\label{eq:sigma_convergence} \sum_{[s,t] \in \cP_n} \int_s^t \left(\theta_{x_s} \sigma_r \right)  \d W_r &\quad \to \quad \int_0^T \left(\theta_{x_r} \sigma_r \right) \d W_r
\end{align}
for some sequence of partitions $(\cP_n)_{n \in \N}$ as $n \to \infty$.\\
For claim \eqref{eq:shift_convergence} chose $\epsilon' > 0$ and set $\epsilon = \epsilon' / V_x[0,T]$ where $V_x[0,T]$ is the total variation of the process $x$ over the interval $[0,T]$. By Lemma~\ref{lem:uniform_estimate} we can find $\delta > 0$ such that
\[\norm{\theta_h v'_s - v'_r}{L^2(\R)} \le \epsilon\]
for all $|h| \le \delta, |s - r| \le \delta$. By continuity of $x$ we can choose the mesh of $\cP_n$ fine enough such that $|x_{r,s}| \le \delta$ for $|s - r| \le |\cP_n|$ and hence
\begin{multline*}
\norm{\sum_{[s,t] \in \cP_n} \int_s^t \dot x_r \left(\theta_{x_s} v'_s - \theta_{x_r} v'_r \right)  \d r}{L^2(\R)} \le \\
\le \sum_{[s,t] \in \cP_n} \int_s^t |\dot x_r| \norm{\theta_{x_{r,s}} v'_s - v'_r}{L^2(\R)}\d r \le V_x[0,T] \epsilon = \epsilon'
\end{multline*}
for $|\cP_n|$ small enough. As $\epsilon' > 0$ was arbitrary this shows \eqref{eq:shift_convergence} for any sequence of partitions with $|\cP_n| \to 0$. For \eqref{eq:mu_convergence} we estimate
% \begin{equation*}
% \norm{\sum_{[s,t] \in \cP_n} \int_s^t \left(\theta_{x_s} \mu_r - \theta_{x_r} \mu_r \right)  \d r}{L^2(\R)} \le \sum_{[s,t] \in \cP_n} \int_s^t \norm{\theta_{x_s - x_r} \mu_r - \mu_r}{L^2(\R)}\d r
% \end{equation*}
% and apply Lemma~\ref{lem:uniform_estimate} as above to conclude that the $L^2$-norm on the right hand side vanishes uniformly as $|\cP_n| \to 0$. 
\begin{equation*}
\norm{\sum_{[s,t] \in \cP_n} \int_s^t \left(\theta_{x_s} \mu_r - \theta_{x_r} \mu_r \right)  \d r}{L^2(\R)} \le \sum_{[s,t] \in \cP_n} \int_s^t \norm{\theta_{x_s - x_r} \mu_r - \mu_r}{L^2(\R)}\d r.% \\
% &\le \int_0^T \sup_{\abs{t-s}\leq \abs{\cP_n}} \norm{\theta_{x_s-x_t}\mu_r - \mu_r}{L^2(\R)} \d r.
\end{equation*}
The integrand converges to $0$ for $\abs{\cP_n}\searrow 0$, due to strong continuity of $\theta$ and continuity of $x$. By dominated convergence, this carries over to the whole integral. 

For \eqref{eq:remainder_convergence} estimate 
\begin{align*}
R &:= \norm{\sum_{[s,t] \in \cP_n} \left(\theta_{x_t} v_t - \theta_{x_s} v_t - x_{s,t} \left(\theta_{x_s} v'_s \right) \right)}{L^2(\R)} \le \\
&\le \sum_{[s,t] \in \cP_n} \norm{\theta_{x_{s,t}} v_t - v_t - x_{s,t} v'_s}{L^2(\R)} = \\
&= \sum_{[s,t] \in \cP_n} \left(\int_{-\infty}^\infty \left(v_t(y + x_{s,t}) - v_t(y) - x_{s,t} v'_s(y) \right)^2 \d y\right)^{1/2}.
\end{align*}
 Applying first the fundamental theorem of calculus and in the second step Jensen's inequality and Fubini's theorem we continue with 
\begin{align*}
R &= \sum_{[s,t] \in \cP_n} |x_{s,t}| \left(\int_{-\infty}^\infty \int_0^1 \left(v'_t(y + z x_{s,t}) - v'_s(y) \right)^2 \d y \d z \right)^{1/2} \le \\
&\le \sum_{[s,t] \in \cP_n} |x_{s,t}| \left(\int_0^1 \norm{\theta_{zx_{s,t}} v_t' - v_s'}{L^2(\R)}^2 \d z \right)^{1/2}.
\end{align*}
Again $\sum_{[s,t] \in \cP_n} |x_{s,t}|$ can be bounded by the total variation of $x$, while Lemma~\ref{lem:uniform_estimate} shows that the $L^2$-norm vanishes uniformly as $|\cP_n| \to 0$. Finally, to obtain the convergence of the stochastic integrals in \eqref{eq:sigma_convergence}, we define the $\HS(U,L^2(\R))$-valued functions
\[ \Phi_{\cP_n}(r) = \sum_{[s,t]\in \cP_n} (\theta_{x_s}\sigma_r) \1_{(s,t]}(r),\qquad\Phi(r) = \theta_{x_r} \sigma_r\]
and note that \eqref{eq:sigma_convergence} is equivalent to $\lim_{n\to \infty} \int_0^T \Phi_{\cP_n}(r)\d W_r = \int_0^T \Phi(r)\d W_r$ along the sequence of partitions $\cP_n$.
Denoting by $\norm{.}{HS}$ the Hilbert-Schmidt-norm on $\HS(U,L^2(\R))$ we claim that 
\begin{equation}
\int_0^T \norm{\Phi_{\cP_n}(r) - \Phi(r)}{HS}^2 \d r \to 0,\qquad \text{as $|\cP_n| \to 0$.}
\end{equation}
Indeed, by rewriting
\begin{equation*}
\int_0^T \norm{\Phi_{\cP_n}(r) - \Phi(r)}{HS}^2 \d r = \sum_{[s,t] \in \cP_n} \int_s^t \norm{\theta_{x_s - x_r} \sigma_r - \sigma_r}{HS}^2 \d r
\end{equation*}
we may proceed as in the case of \eqref{eq:mu_convergence}, since $\theta$ is strongly continuous on $\HS(U;L^2(\R))$, see Remark~\ref{rmk:shiftHS}, to conclude that the Hilbert-Schmidt-norm on the right hand side vanishes uniformly as $|\cP_n| \to 0$. Choosing $\delta, \epsilon > 0$, \cite[Prop. 4.31]{dPZinf} says that 
  \begin{multline*}
    \PP\left[\norm{\int_0^t\Phi_{\cP_n}(r)-\Phi(r)\d W_r}{L^2(\R)} > \delta \right] \\
    \leq \epsilon + \PP\left[\int_0^t \norm{\Phi_{\cP_n}(r) -
        \Phi(r)}{HS}^2 \d r > \frac{\epsilon}{\delta^2} \right] \phantom{blabla}
  \end{multline*}
  and we see that as $|\cP_n| \to 0$ the right hand side can be made arbitrarily small. Hence, $\Phi_{\cP_n}(r)\d W_r \to \int_0^T \Phi(r)\d W_r$ in probability, as $|\cP_n| \to 0$. In particular any sequence of partitions with mesh tending to zero contains a subsequence $(\cP_{n_k})_{k \in \N}$ such that the convergence of stochastic integrals takes place almost surely, completing the proof of \eqref{eq:sigma_convergence}.
\end{proof}

\subsection{Completing the proof of the main result}
Making use of the notation defined in section~\ref{sec:sfbp} and~\ref{sec:proofs} we complete the proof of Theorem~\ref{thm:sfbp} by combining the relevant results.
\begin{proof}[Proof of Theorem~\ref{thm:sfbp}]

Given initial data $x_0\in \R$, $v_0\in \Gamma(x_0)$ we set
  \[X_0 := (v_0(x_0 + (.))\vert_{\R_{\geq 0}} , v_0(x_0 - (.))\vert_{\R_{\geq 0}}, x_0) \in \DA.\]
  By application of the Lemmas in section~\ref{sec:proofs} to Theorem~\ref{thm:EUCloc} and Corollary~\ref{thm:eus} there exists a unique maximal strong solution of~\eqref{eq:SEEq} $X$ on $\L^2$ with initial value $X_0$. Note that $X$ is $\H^2$ continuous and denote its $\H^2$-explosion time by $\tau$. 
Setting 
\[u_1(t) := X_1(t), \quad u_2(t) := X_2(t), \quad x_*(t) := X_3(t)\]
we obtain a solution of the fixed boundary problem \eqref{eq:fbpt} on $\llbrak 0,\tau\llbrak$. Next we paste together $u_1$ and $u_2$ by setting
\[u_t(x) = u_1(t,x) \1_{\R_+}(x) + u_2(t,-x) \1_{\R_-}(x)\]
and note that due to the Dirichlet boundary condition at $0$ it holds that $u_t \in H^1(\R)$. Thus we may apply the stochastic chain rule of 
Theorem~\ref{thm:chain_rule} to $v_t := F(u_t, -x_*(t)) = \theta_{-x_*(t)} u_t$ to obtain a local solution $v_t$ of the stochastic free boundary problem \eqref{eq:fbp}.

To show uniqueness we assume that there exists another local solution $(\check v,\check x_*)$ of \eqref{eq:fbp} on a stochastic interval $\llbrak 0,\varsigma \llbrak$, which, by definition is a.\,s. $H^1(\R)$-continuous. Applying the stochastic chain rule of 
Theorem~\ref{thm:chain_rule} to $\check u_t = F(\check v_t, x_*(t)) = \theta_{\check x_*(t)} \check v_t$ we obtain a local solution $\check u_t$ of the stochastic fixed boundary problem \eqref{eq:fbp}. Reversing the procedure from above $\check u_t$ can be rewritten as a solution $\check X(t)$ of the abstract stochastic evolution equation \eqref{eq:SEEq}. The parts of Theorem~\ref{thm:EUCloc} and Corollary~\ref{thm:eus} on uniqueness and maximality imply $\varsigma \leq \tau$ and that $\check X(t)$ is equal to the solution $X(t)$ constructed previously. We conclude that also $(\check v,\check x_*) = (v,x_*)$ and the proof is complete.
%
%On $\llbrak 0,\varsigma \llbrak$, set
%  \[\mu_t := \Delta \check v(t) + \bar \mu(.-\check x_*(t), \check v(t), \nabla v(t)),\qquad \sigma_t := \bar \sigma(.-\check x_*(t), \check v(t))T_\zeta.\]
%  By construction of $\Gamma(x)$, $x\in \R$, the process $(\check u(t))_t:= (\check v(t,\check  x_*(t) + .))_t$ lives in $H^2(\R\setminus\{0\})$. Kuratowski's theorem yields that $\check u$ is also $H^2$-predictable, so that $\nabla  \check u$, $\Delta  \check u$, defined on $\R\setminus\{0\}$ are predictable, too. By plugging $ \check u$ in into $\bar \mu$ and $\bar \sigma$ and transforming back, we get that $(\ddt \check  x_*(t))$, $(\mu_t)_t$ and $(\sigma_t)_t$ are predictable. Now, Theorem~\ref{thm:chain_rule} yields that $\check u$ is a local strong $L^2$-solution, so that uniqueness and maximality imply $\varsigma \leq \tau$ almost surely and $\check v = v$ on $\llbrak 0,\varsigma \llbrak$, where $(v,\tau)$ is the solution constructed above.
\end{proof}

\begin{appendix}
\section{The Nemytskii operator on Sobolev spaces}\label{A:nem}
In this section we prove some regularity results on the Nemytskii operator $N$,
\[ N(u)(x) = \mu(x,u(x)),\]
on the Sobolev spaces $H^k(\R_+)$. Here, $\mu :\R_+ \times \R^d \rightarrow \R$ and $x\in \R_+$. Note that these results are well-known, even for more general spaces, in the case of bounded domains, see e.g. \cite{valentBook}, \cite{appell}. However, in the case of unbounded domains several additional conditions on $\mu$ are necessary to make them work.
First, we state a result which guarantees, that under certain assumptions on $\mu$, $N$ maps $H^k$ into $H^k$. For a proof we refer to \cite[Theorem 1]{valentPaper}, of which it is a special case.
\begin{lem}\label{lem:mult}
  For each integer $k\geq 1$ the space $H^k(\R_+)$ is a Banach algebra. In particular, there exists a constant $c$ such that for all $u$, $v\in H^k(\R_+)$ it holds that $uv\in H^k(\R_+)$ and
  \[ \norm{uv}{H^k} \leq c \norm{u}{H^k}\norm{v}{H^k}.\]
\end{lem}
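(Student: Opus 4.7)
The plan is to prove the algebra inequality by combining the Leibniz rule for weak derivatives with the one-dimensional Sobolev embedding $H^1(\R_+)\hookrightarrow L^\infty(\R_+)$. Since $H^k(\R_+)$ is already a Banach space, only the multiplicative estimate $\norm{uv}{H^k}\leq c\norm{u}{H^k}\norm{v}{H^k}$ requires work.

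First, I would record the Leibniz rule for weak derivatives: for $u,v\in H^k(\R_+)$ and each $0\leq \ell\leq k$,
\[
  D^\ell(uv) = \sum_{j=0}^\ell \binom{\ell}{j} (D^j u)(D^{\ell-j} v)
\]
as an identity in $L^2(\R_+)$. To justify this I would use an extension operator $E: H^k(\R_+) \to H^k(\R)$ to transfer the question to the full line, where Leibniz for weak derivatives is classical (e.g.\ via approximation of $Eu, Ev$ by Schwartz functions and passage to the limit, using the bounds that come out of the next step), and then restrict back to $\R_+$.

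The core of the argument is then to estimate each summand in $L^2(\R_+)$. For fixed $j\in\{0,\dots,\ell\}$ the factor $D^j u$ lies in $H^{k-j}(\R_+)$ and $D^{\ell-j} v$ lies in $H^{k-\ell+j}(\R_+)$; both exponents are non-negative and their sum equals $2k-\ell\geq k\geq 1$, so at least one of them is $\geq 1$. Hence one of the two factors can be controlled in $L^\infty(\R_+)$ via the Sobolev embedding $H^1(\R_+)\hookrightarrow L^\infty(\R_+)$, whose constant depends only on the ambient space. A simple Hölder estimate $\norm{fg}{L^2}\leq\norm{f}{L^\infty}\norm{g}{L^2}$ then yields
\[
  \norm{(D^j u)(D^{\ell-j} v)}{L^2(\R_+)} \leq c_{k,\ell,j}\,\norm{u}{H^k}\norm{v}{H^k},
\]
and summing over $j\in\{0,\dots,\ell\}$ and then $\ell\in\{0,\dots,k\}$ produces the desired bound with a constant $c$ depending only on $k$.

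The main (and essentially only) point that requires some care is the Leibniz identity itself on the half-line, since traces and the unboundedness of the domain preclude a naive compactly-supported mollification argument. Reducing to $\R$ via a bounded extension operator sidesteps this cleanly. Everything after that is routine combinatorial bookkeeping, and the argument also explains why the threshold $k\geq 1$ is sharp: without at least one factor being embeddable into $L^\infty$, no such uniform multiplicative bound can be derived from an $L^2$-based norm.
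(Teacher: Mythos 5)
Your proof is correct and gives a self-contained argument, whereas the paper does not prove this lemma at all: it cites \cite[Theorem 1]{valentPaper}, treating the Banach-algebra property of $H^k(\R_+)$ as a special case of a more general result on products of Sobolev functions. Your approach is the standard direct one: Leibniz rule, Sobolev embedding $H^1(\R_+)\hookrightarrow L^\infty(\R_+)$, and H\"older. The key combinatorial observation --- that for each summand $(D^j u)(D^{\ell-j}v)$ the regularity indices $k-j$ and $k-\ell+j$ are non-negative and sum to $2k-\ell \ge k \ge 1$, so at least one factor embeds into $L^\infty$ --- is exactly right, and it also correctly handles the corner case $j=\ell=k$, where $v$ (not $D^k u$) is the factor placed in $L^\infty$. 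Your handling of the Leibniz rule on the half-line via a bounded extension operator is sound; one could equally well appeal to the density of $C^\infty(\R_{\ge 0})\cap H^k(\R_+)$ in $H^k(\R_+)$ and pass to the limit, which is the device the paper itself uses in the proof of Theorem~\ref{thm:Ncont}. The trade-off between the two routes is transparency versus economy: your argument is elementary and exposes exactly why $k\ge 1$ is needed (without at least one $L^\infty$-embeddable factor an $L^2$-based product estimate fails), whereas the paper's one-line citation keeps the exposition short at the cost of deferring to an external reference with heavier machinery.
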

Next, we adapt \cite[Theorem 2]{valentPaper} to our setting. For notational reasons we also introduce the Nemytskii operators
\[ N_x(u)(x) := \left(\ddx \mu\right)(x,u(x)),\quad N_{y_j}(u)(x):=\left(\frac{\partial}{\partial y_j} \mu\right)(x,u(x)),\;j=1,...,d,\]
for $\quad u\in H^k(\R_+; \R^d),\;x\in \R_+$.
In order for $N$ to map $H^k$ into $H^k$ again, we need certain growths restrictions, which is not the case on bounded domains. 
\begin{ass}\label{a:mugrowths} Assume $\mu\in C^m(\R_{\geq 0}\times \R^d, \R)$ and
  \begin{enumerate}[label=(\alph*)]
    % \item There exists an $a\in L^2(\R_+)$ and some $b:\R^d\rightarrow \R_+$ locally bounded, such that 
    %   \[\abs{\mu(x,y)} \leq a(x) + b(y)\abs{y}_2,\,\forall\,x\in \R_+,\,y\in \R^d\] 
  \item For each integer $l$, $0\leq l\leq m$ there exists an $ a_l\in L^2(\R_+)$ and some $ b_l:\R^d\rightarrow \R_+$ locally bounded, such that 
    \[\abs{D^{(l,0,...,0)} \mu(x,y)} \leq  a_l(x) +  b_l(y)\abs{y},\quad\forall\,x\in \R_+,\,y\in \R^d\] 
  \item For each multiindex $\alpha$ with $\alpha_1 \neq \abs{\alpha} \leq m$, the functions $\sup_{x\in \R_+} \abs{D^\alpha \mu (x,.)}$ are locally bounded.
  \end{enumerate}
\end{ass}
\begin{ass}\label{a:mulip}
  Assume that $\mu\in C^{m}(\R_{\geq 0} \times \R^d, \R)$ and $D^{\alpha}\mu(x,.)$ is locally Lipschitz for all multi-indices $\alpha$, $\abs{\alpha}\leq m$ with Lipschitz constants uniform in $x\in \R_{\geq 0}$, i.\,e. we assume that for all $r\geq 0$ there exists $L_r\geq 0$ such that   \begin{equation*}
    \abs{D^{\alpha}\mu(x,y) - D^{\alpha}\mu(x,z)} \leq L_r \abs{y-z}.
  \end{equation*}
  holds for all $y$, $z\in \R^d$ with $\abs{y}$, $\abs{z}\leq r$ and $\alpha$, $\abs{\alpha}\leq m$.
\end{ass}
\begin{rmk}
  If $\mu$ satisfies Assumption~\ref{a:mugrowths} for some integer $m\geq 1$, then $\mu$ satisfies Assumption~\ref{a:mulip} for $m-1$.
\end{rmk}
\begin{rmk}
  Recall the Sobolev embeddings
  \[H^{m+1}(\R_+) \hookrightarrow BUC^m(\R_+),\]
  where $BUC^m(\R_+)$ denotes the Banach space of functions with bounded and uniformly continuous derivatives up to order $m$. As usual $BUC^m(\R_+)$ is equipped with the $C^m$-norm. In the following, we will work with the $BUC^m$ representative of the elements in $H^{m+1}$ without further comment.
\end{rmk}
\begin{thm}\label{thm:Ncont}
  If Assumption~\ref{a:mugrowths} holds for some integer $m\geq 1$, then the operator $N$ is continuous from $(H^m(\R_+))^d$ into $H^m(\R_+)$.
\end{thm}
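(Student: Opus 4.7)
My plan is to analyze the $k$-th weak derivative of $N(u)(x) = \mu(x,u(x))$ for each $0 \leq k \leq m$ via Fa\`a di Bruno's formula, which expresses $\partial_x^k N(u)$ as a finite sum of terms of the form
\[ c_{l,\alpha,\mathbf{n}} \cdot \partial_x^l \partial_y^\alpha \mu(x,u(x)) \cdot \prod_{i=1}^{|\alpha|} u_{j_i}^{(n_i)}(x), \qquad l + \sum_i n_i = k, \quad n_i \geq 1. \]
Both well-definedness of $N \colon (H^m(\R_+))^d \to H^m(\R_+)$ and its continuity then reduce to suitable $L^2$-estimates on such terms, either for a fixed $u$ or for a telescoped difference when $u$ is replaced by a sequence $u_n \to u$.

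For well-definedness I would bound each Fa\`a di Bruno term in $L^2$ in two cases. If $|\alpha| = 0$ (so $l = k$), Assumption~\ref{a:mugrowths}(a) gives $|\partial_x^k \mu(x,u(x))| \leq a_k(x) + b_k(u(x))\,|u(x)|$, which lies in $L^2$ since $H^m(\R_+) \hookrightarrow L^\infty \cap L^2$ for $m \geq 1$. If $|\alpha| \geq 1$, Assumption~\ref{a:mugrowths}(b) bounds $\partial_x^l \partial_y^\alpha \mu(x,u(x))$ uniformly in $x$ by a constant depending only on $\|u\|_\infty$, while the product $\prod_i u_{j_i}^{(n_i)}$ contains at most one factor of top order $n_i = m$ (lying in $L^2$), and has all remaining factors in $H^1 \hookrightarrow L^\infty$, so the product lies in $L^2$, either by Lemma~\ref{lem:mult} or by a direct $L^\infty \cdots L^\infty \cdot L^2$ estimate.

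For continuity I would apply the same decomposition to $N(u_n) - N(u)$ when $u_n \to u$ in $(H^m(\R_+))^d$. By Sobolev embedding $(u_n)$ is uniformly bounded in $L^\infty$ and converges uniformly to $u$. Each Fa\`a di Bruno term-difference then splits by telescoping into
\[ \bigl[\partial_x^l \partial_y^\alpha \mu(\cdot,u_n) - \partial_x^l \partial_y^\alpha \mu(\cdot,u)\bigr] \prod_i u_{n,j_i}^{(n_i)} \;+\; \partial_x^l \partial_y^\alpha \mu(\cdot,u) \Bigl[\prod_i u_{n,j_i}^{(n_i)} - \prod_i u_{j_i}^{(n_i)}\Bigr]. \]
Whenever $l + |\alpha| \leq m - 1$, the remark following Assumption~\ref{a:mulip} provides a uniform-in-$x$ local Lipschitz bound for $\partial_x^l \partial_y^\alpha \mu$ in $y$; combined with $L^\infty$-convergence of $u_n$ this drives the first bracket to zero in $L^\infty$, so multiplication by the $L^2$-bounded product gives $L^2$-convergence of the term. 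The second bracket is telescoped into a sum of products, each containing exactly one factor $u_{n,j_i}^{(n_i)} - u_{j_i}^{(n_i)}$ that vanishes in the appropriate $L^\infty$ or $L^2$ norm, with the remaining factors uniformly bounded in $L^\infty$ or $L^2$ as needed.

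The main obstacle is the top-order terms with $l + |\alpha| = m$, for which Assumption~\ref{a:mulip} no longer supplies Lipschitz regularity of $\partial_x^l \partial_y^\alpha \mu$ in $y$, only continuity (from $\mu \in C^m$). I would handle these by splitting $\R_+$ into a compact part $[0,K]$ and its tail $(K,\infty)$. On $[0,K]$, uniform continuity of $\partial_x^l \partial_y^\alpha \mu$ on the compact set $[0,K] \times \overline{B}_R$ turns uniform $L^\infty$-convergence of $u_n$ into uniform convergence of the coefficient, and hence into $L^2([0,K])$-convergence of the term. On the tail, uniform boundedness of $\partial_x^l \partial_y^\alpha \mu(x,u_n(x))$ from Assumption~\ref{a:mugrowths}(b) for $|\alpha|\ge 1$, or the $L^2$-growth bound from (a) for $|\alpha|=0$, combined with uniform $L^2$-tail integrability of the convergent sequences $(u_n^{(n_i)})$, controls the tail contribution uniformly in $n$ and renders it arbitrarily small for $K$ large. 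Summing over the finite collection of Fa\`a di Bruno terms then yields $N(u_n) \to N(u)$ in $H^m(\R_+)$.
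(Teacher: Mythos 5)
Your proposal is mathematically sound, but it takes a genuinely different route from the paper's proof. The paper argues by induction on $m$: for $m=1$ it applies the chain rule after approximating $u$ by smooth functions and closes via a dominated-convergence argument; for the induction step it observes that $\ddx \circ N$ decomposes as $N_x(u) + \sum_j \tilde N_j(u,\nabla u_j)$ (Eq.~\eqref{eq:DN}), where both $\ddx\mu$ and the new function $\tilde\mu_j(x,y,z) := \tfrac{\partial}{\partial y_j}\mu(x,y)\,z$ inherit Assumption~\ref{a:mugrowths} at level $m$ from $\mu$ at level $m+1$, so the induction hypothesis plus the Banach-algebra Lemma~\ref{lem:mult} finish the step. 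This reduction is what spares the paper from ever touching the ``top-order'' coefficients $\partial_x^l\partial_y^\alpha\mu$ with $l+|\alpha|=m$ at the non-Lipschitz edge of regularity: they become lower-order coefficients for the derived functions $\ddx\mu$ and $\tilde\mu_j$. Your proof instead expands $\partial_x^k N(u)$ via Faà di Bruno all at once and handles the borderline coefficients directly by the compact-plus-tail splitting: uniform continuity of $\partial_x^l\partial_y^\alpha\mu$ on $[0,K]\times\overline{B}_R$ and uniform tightness in $L^2$ of $(u_n^{(j)})_n$ as $K\to\infty$. This is correct — $L^2$-convergent sequences are uniformly tight, and Assumption~\ref{a:mugrowths}(b) (resp.~(a)) supplies the needed uniform bound on the coefficient over the tail. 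Your direct approach buys a self-contained, non-inductive argument; the paper's reduction buys a much shorter proof in which all the awkward boundary cases disappear into the transformed coefficients.

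One small wrinkle worth noting: at the outset you invoke Faà di Bruno for the ``$k$-th weak derivative'', which for $u\in(H^m)^d$ requires justification. The standard fix, which the paper spells out ($(u^n)\subset H^1\cap C^\infty$, $u^n\to u$), is to prove the formula for smooth $u^n$ and then show each term converges; your continuity analysis supplies exactly this limit argument, so the gap is expository rather than mathematical, but you should say so explicitly since the weak form of Faà di Bruno is what you are really establishing along the way.
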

\begin{proof}
  %%% 
  We adapt the proof of \cite[Theorem 2]{valentPaper} for the domain $\R_+$ and the spaces $H^k$ by incorporating the additional growths assumptions. We proceed by induction and consider $m=1$, first. Since $\mu\in C^1(\R_{\geq 0}\times \R^d)$ we get immediately that $N(u)$, $N_x(u)$ and $N_{y_j}(u)$, $j=1,...,d$ are bounded and continuous functions for $u\in H^1(\R_+; \R^d)$ fixed. Let now $(u^n) \subset H^1(\R_+; \R^d) \cap C^\infty(\R_{\geq 0};\R^d)$ such that $u^n \longrightarrow u$ in $H^1$. Then the convergence also takes place in $\norm{.}{\infty}$ and by the chain rule we can write
  \begin{equation}
    \tfrac{\d}{\d x} N(u^n) = N_x(u^n) + \sum_{j=1}^n N_{y_j}(u^n) \nabla u^n_j. \label{eq:DN}
  \end{equation}
  By assumption,
  \[\abs{N(u^n)} \leq a_0 + (b_0 (u^n)) \abs{u^n},\text{ and }\abs{N_x(u^n)} \leq a_1 + (b_1(u^n)) \abs{u^n}.\]
  Since $u^n\rightarrow u$ uniformly and in $L^2$ we get for each estimate that $\norm{b_i(u^n)}{\infty}$ for $i \in \{0,1\}$ is bounded in $n\in \N$ such that both, $N(u^n)$ and $N_x(u^n)$ are bounded by $L^2$-converging sequences. Hence, we can apply a version of Lebesgue's dominated convergence~\cite[Theorem 1.21]{kallenberg} and obtain the $L^2$ convergence of
  \[ N(u^n) \longrightarrow N(u),\quad\text{and}\quad N_x(u^n) \longrightarrow N_x(u),\;\text{ as }n\rightarrow \infty.\]
  For the remaining summands we get
  \begin{multline*}
    \norm{N_{y_j} (u^n) \nabla u^n_j - N_{y_j} (u) \nabla u}{L^2} \leq \sup_{n\in\N}\norm{N_{y_j}(u^n)}{\infty} \norm{\nabla u^n_j - \nabla u_j}{L^2} \\ + \norm{\left( N_{y_j}(u^n) - N_{y_j}(u)\right) \nabla u_j}{L^2}
  \end{multline*}
  which goes to $0$ as $n\to \infty$. Indeed, uniform convergence of $(u^n)$ and dominated convergence yield $L^2$-convergence of $(N_{y_j}(u^n)\nabla u)$. Hence,
  \[ N(u^n) \xrightarrow[]{H^1}N(u),\quad\text{ as }n\to \infty.\]
  By completeness of $H^1$ this implies $N(u)\in H^1(\R_+)$ and also shows the continuity of $N$ for the case $m=1$.

For the induction step from $m$ to $m+1$ we may assume that the claim holds true for $m \geq 1$ and that Assumption~\ref{a:mugrowths} holds for $(m+1)$. Clearly, the assumption also holds for $m$ and so $N$ maps $H^{m+1}$ continuously into $H^m$ by induction hypothesis. It thus remains to show that also $\tfrac{\d}{\d x}\circ N$ maps $H^{m+1}$ into $H^m$. We decompose $\tfrac{\d}{\d x}\circ N$ as in \eqref{eq:DN} and note that Assumption~\ref{a:mugrowths} for $m$ is also satisfied by $\ddx \mu$ and by
  \begin{equation}
    \label{eq:mutransf}
(x,y,z) \mapsto \tilde\mu_j(x,y,z) := \tfrac{\partial}{\partial y_j} \mu(x,y)z.    
  \end{equation}
  Hence, by induction hypothesis the operators $N_x$ and $\tilde N_j$ are continuous from $H^{m}(\R_+)^d$, resp. $H^m (\R_+)^{d+1}$, into $H^m (\R_+)$, where $\tilde N_j$ is the Nemytskii operator defined by $\tilde \mu_j$, $j=1,...,d$. Since also $u\mapsto \nabla u$ is continuous from $H^{m+1}$ into $H^m$ and Lemma~\ref{lem:mult} shows continuity of multiplication, \eqref{eq:DN} yields continuity of $\tfrac{\d}{\d x}\circ N$ from $H^{m+1}$ into $H^m$, as claimed.
  %%% Show now that ddxN is a continuous operator from H^{m+1} into H^m  --> see valent:book
  % By chain rule, we can write for each multiindex $\alpha$, $\abs{\alpha} =l\leq  m$
  % \[ \tfrac{\d^l}{\d x} N(u)(x) = D^{(l,0,...,0)} \mu(x,u(x)) +  \sum_{\abs{\alpha} = l, i_1<l}D^\alpha \mu(x,u(x)) u_1^{\alpha_2}(x)\ldots u_N^{\alpha_{N+1}}(x)\]
  % Now, since $u$ is essentially bounded, we have $b_l\circ u \in L^\infty$ and thus the first term in the sum is clearly square integrable. For the summands in the second term we analogously obtain that, w.\,l.\,o.\,g. assume that $i_2 \geq 1$,
  % \begin{multline*}
  %   \int_{\R_+} \abs{D^\alpha \mu(x,u(x)) u_1^{\alpha_2}(x)\ldots u_N^{\alpha_{N+1}}(x)}^2\d x\\
  %   \leq \norm{ D^\alpha \mu(.,u(.))}{\infty}^2\norm{u_1}{L^2}^2\norm{u_1}{\infty}^{2(\alpha_2 -1)}\norm{u_2}{\infty}^{2\alpha_3}\ldots \norm{u_n}{\infty}^{2\alpha_{N+1}}.
  % \end{multline*}
  % Due to the assuptions, it and thus, (***need convergence of $F$ in $L^2$!!!***) 
  %%%
\end{proof}
\begin{thm}\label{thm:Nlip}
  Let $\mu$ satisfy Assumptions~\ref{a:mugrowths} and~\ref{a:mulip} for some positive integer $m$. Then, $N$ is Lipschitz continuous from bounded subsets of $(H^m(\R_+))^d$ into $H^m(\R_+)$.
\end{thm}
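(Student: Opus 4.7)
The approach is to proceed by induction on the order $m$, closely mirroring the structure of the proof of Theorem~\ref{thm:Ncont}. Fix $R>0$ and restrict attention to the bounded set $\mathcal{B}_R := \{u \in (H^m(\R_+))^d : \norm{u}{H^m}\leq R\}$. The decisive preliminary observation is that the Sobolev embedding $H^m(\R_+) \hookrightarrow BUC(\R_+)$ (valid for $m\geq 1$ on the half-line) provides a constant $C_R$ such that $\norm{u}{\infty}\leq C_R$ for every $u \in \mathcal{B}_R$. Thanks to this uniform $L^\infty$-bound I can apply the local Lipschitz constants from Assumption~\ref{a:mulip} as a single global constant $L=L_{C_R}$ throughout the argument, and similarly the local bounds of Assumption~\ref{a:mugrowths}(b) as uniform constants.

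For the base case $m=1$, I would first obtain the $L^2$-part pointwise: the uniform Lipschitz property yields $|N(u)(x)-N(v)(x)| \leq L\abs{u(x)-v(x)}$, whence $\norm{N(u)-N(v)}{L^2}\leq L\norm{u-v}{L^2}$. For the derivative I would apply the chain rule~\eqref{eq:DN} and split
\[ \partial_x N(u) - \partial_x N(v) = \big(N_x(u)-N_x(v)\big) + \sum_{j=1}^d \Big( N_{y_j}(u)\,(\partial_x u_j - \partial_x v_j) + \big(N_{y_j}(u)-N_{y_j}(v)\big)\partial_x v_j \Big). \]
The first summand is estimated exactly as above using the uniform Lipschitz constant of $\partial_x \mu$. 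For the product terms, Assumption~\ref{a:mugrowths}(b) (applied to the multi-index $\alpha=(0,e_j)$, which has $\alpha_1=0\neq|\alpha|=1$) gives that $\norm{N_{y_j}(u)}{\infty}$ is uniformly bounded on $\mathcal{B}_R$, while $\norm{\partial_x v_j}{L^2}\leq R$ and $\norm{u-v}{\infty} \leq c\norm{u-v}{H^1}$ by the Sobolev embedding. Collecting terms yields the base-case estimate.

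For the inductive step from $m$ to $m+1$, I would use the equivalent seminorm $\norm{\,\cdot\,}{H^{m+1}} \simeq \norm{\,\cdot\,}{H^m} + \norm{\partial_x(\,\cdot\,)}{H^m}$. The bound on $\norm{N(u)-N(v)}{H^m}$ follows directly from the induction hypothesis, since $\mathcal{B}_R\subset(H^m)^d$ is bounded also in the weaker norm. For the derivative term I would invoke~\eqref{eq:DN} once more: the Nemytskii $N_x$ associated with $\partial_x\mu$ manifestly inherits Assumptions~\ref{a:mugrowths} and~\ref{a:mulip} at order $m$, so induction provides the required bound on $\norm{N_x(u)-N_x(v)}{H^m}$. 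For the product terms I would reinterpret $N_{y_j}(u)\,\partial_x u_j$ as the Nemytskii operator $\widetilde{N}_j$ associated with $\widetilde{\mu}_j(x,y,z)=\partial_{y_j}\mu(x,y)\,z$, cf.~\eqref{eq:mutransf}, evaluated on the $(d+1)$-tuple $(u,\partial_x u_j)$. Since $u\mapsto(u,\partial_x u_j)$ is bounded and linear from $H^{m+1}$ to $(H^m)^{d+1}$, the induction hypothesis applied to $\widetilde{N}_j$ closes the step, with the Banach algebra property of $H^m(\R_+)$ from Lemma~\ref{lem:mult} used implicitly in the product structure.

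The main obstacle, as in Theorem~\ref{thm:Ncont}, lies in verifying that the auxiliary symbol $\widetilde{\mu}_j$ satisfies Assumptions~\ref{a:mugrowths} and~\ref{a:mulip} at order $m$. The growth side is already indicated in the proof of Theorem~\ref{thm:Ncont}; the Lipschitz side requires the additional observation that because $\widetilde{\mu}_j$ is linear in $z$, its $(y,z)$-derivatives up to order $m$ are either zero, constant in $z$, or products of a $y$-derivative of $\partial_{y_j}\mu$ of order at most $m$ with $z$, each of which inherits uniform-in-$x$ local Lipschitz continuity from the assumptions on $\mu$ at order $m+1$. Once this transfer of hypotheses is spelled out carefully, the remainder of the argument reduces mechanically to the induction hypothesis and to the bilinear continuity of multiplication in $H^m(\R_+)$.
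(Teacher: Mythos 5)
Your proposal is correct and follows essentially the same route as the paper: induction on $m$, uniformizing the local Lipschitz and growth constants via the Sobolev embedding $H^m\hookrightarrow BUC$, the base case via the chain rule~\eqref{eq:DN} and the three-term split of $\partial_x N(u)-\partial_x N(v)$, and the inductive step by transferring the hypotheses to $\partial_x\mu$ and to the auxiliary symbols $\tilde\mu_j$ of~\eqref{eq:mutransf} so that the induction hypothesis applies to $N_x$ and $\tilde N_j$. The only (harmless) extra you add is the explicit verification that $\tilde\mu_j$ inherits Assumption~\ref{a:mulip} at order $m$ from $\mu$ at order $m+1$, which the paper states without elaboration; the mention of Lemma~\ref{lem:mult} is actually superfluous once the product is absorbed into $\tilde N_j(u,\partial_x u_j)$, exactly as the paper implicitly does.
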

\begin{proof}
  We proceed as above, by induction on $m\in \N$. First, let $m=1$ and $u$, $v\in H^1(\R_+;\R^d)$ with $\norm{u}{H^1}$, $\norm{v}{H^1} \leq r$. By continuity of $N$ we can assume, w.\,l.\,o.\,g. $u$, $v\in H^1(\R_+;\R^d)\cap C^\infty(\R_{\geq 0}; \R^d)$. By Sobolev embeddings there exists a constant $c$ s.\,t. $\norm{u}{\infty}$,$\norm{v}{\infty} \leq cr$. By Assumption~\ref{a:mulip},
  \begin{align*}
    \norm{N(u) - N(v)}{L^2} &\leq L_{cr} \norm{u-v}{L^2(\R_+;\R^d)},\\
    \norm{N_x(u) - N_x(v)}{L^2} &\leq L_{cr} \norm{u-v}{L^2(\R_+;\R^d)},
  \end{align*}
  and for $j=1,...,d$,
  \begin{multline}
    \label{eq:27}
    \norm{N_{y_j}(u) \nabla u_j - N_{y_j}(v) \nabla  v_j}{L^2}\\ \leq \norm{N_{y_j}(u)}{\infty} \norm{\nabla u_j - \nabla  v_j}{L^2} + \norm{N_{y_j}(u) - N_{y_j}(v)}{\infty} \norm{\nabla v_j}{L^2}\\
    \leq K_{j,cr} \norm{\nabla u_j - \nabla v_j}{L^2} + L_{cr}\norm{u_j - v_j}{L^2},
  \end{multline}
  for
  \(K_{j,cr} := \sup_{\abs{y}\leq cr} \sup_{x\in \R}\abs{\ddy_j \mu(x,y)}<\infty. \) Chain rule~\eqref{eq:DN} then yields the assertion for $m=1$. 

  For the induction step we may assume that the theorem holds for fixed $m$ and that Assumptions~\ref{a:mugrowths} and~\ref{a:mulip} are satisfied for $m+1$. By induction hypothesis, $N$ is Lipschitz on bounded sets from $H^m(\R_+)^d$ into $H^m(\R_+)$ and thus, also from $H^{m+1}(\R_+)^d$ into $H^m(\R_+)$. Hence, it suffices to show that $\tfrac{\d}{\d x}\circ N$ is Lipschitz on bounded sets from $H^m(\R_+)^d$ into $H^m(\R_+)$. To this end note that $\ddx \mu$ satisfies Assumptions~\ref{a:mugrowths} and~\ref{a:mulip} as well as $\tilde \mu_j$, $j=1,...,d$, defined in~\eqref{eq:mutransf}. By induction hypothesis, the operators $N_x$ and $\tilde N_j$, $j=1,...,d$, defined in the proof of Theorem~\ref{thm:Ncont}, are Lipschitz on bounded sets. Again, approximation by elements in $H^m\cap C^m$ and~\eqref{eq:DN} then show that the same holds true for $\tfrac{\d}{\d x}\circ N$.
\end{proof}
\section{The noise operator}\label{A:noise}
In this section we will study the operator-valued map $\cC$, defined in~\eqref{eq:coefC} by
 \[ (\cC(u)w)(x) =
 \begin{pmatrix}
   \sigma_+(x,u_1(x)) (T_\zeta w)(u_3+x)\\ \sigma_-(-x,u_2(x)) (T_\zeta w)(u_3-x) \\ 0
 \end{pmatrix}% =: N_\sigma (u)\mHC (\theta(T_\zeta w, u_3))(x),
\]
 for $u\in \DA$, $w\in U$ and $x\in \R$. We can reduce the problem to the operator
 \begin{equation}
   \label{eq:Ccomponent}
   (u,x_*)\mapsto \sigma(.,u(.)) T_\zeta (.+x_*)
 \end{equation}
 for $\sigma$ satisfying Assumption~\ref{a:sigma} and $\zeta$ as in Assumption~\ref{a:zeta}. Define the Nemytskii operator 
 \[ N_\sigma: H^2(\R_+) \to H^2(\R_+),\; u\mapsto \sigma(.,u(.)),\]
 which is Lipschitz on bounded sets by Theorem~\ref{thm:Nlip}.
 \begin{lem}\label{lem:HtimesC}
   Multiplication is bilinear continuous from $H^2(\R_+) \times BUC^2(\R_{\geq 0})$ into $H^2(\R_+)$. 
 \end{lem}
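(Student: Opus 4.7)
The plan is to use the Leibniz rule together with the fact that $BUC^2(\R_{\geq 0})$ consists of functions with bounded derivatives up to order two, so that multiplication by $g \in BUC^2$ and its derivatives preserves $L^2$-integrability of $f$ and its weak derivatives.

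Concretely, I would first argue that the product $fg$ is well-defined for $f \in H^2(\R_+)$ and $g \in BUC^2(\R_{\geq 0})$ and lies in $H^2(\R_+)$ with weak derivatives given by the classical Leibniz rule
\begin{equation*}
(fg)' = f'g + fg', \qquad (fg)'' = f''g + 2 f'g' + fg''.
\end{equation*}
The cleanest way to justify this is density: approximate $f \in H^2(\R_+)$ by a sequence $(f_n) \subset C^\infty(\R_{\geq 0}) \cap H^2(\R_+)$ with $f_n \to f$ in $H^2$. For smooth $f_n$ the Leibniz rule holds in the classical sense, and since $g,g',g'' \in L^\infty(\R_{\geq 0})$, each of the terms $f_n g$, $f_n' g + f_n g'$, $f_n'' g + 2 f_n' g' + f_n g''$ converges in $L^2(\R_+)$ to the corresponding expression for $f$. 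Passing to the limit shows that $fg \in H^2(\R_+)$ with the identified weak derivatives.

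The norm estimate is then a direct bookkeeping step: applying the triangle inequality and $\|\phi h\|_{L^2} \leq \|\phi\|_\infty \|h\|_{L^2}$ to each of the three expressions above yields
\begin{align*}
\|fg\|_{L^2} &\leq \|g\|_\infty \|f\|_{L^2},\\
\|(fg)'\|_{L^2} &\leq \|g\|_\infty \|f'\|_{L^2} + \|g'\|_\infty \|f\|_{L^2},\\
\|(fg)''\|_{L^2} &\leq \|g\|_\infty \|f''\|_{L^2} + 2\|g'\|_\infty \|f'\|_{L^2} + \|g''\|_\infty \|f\|_{L^2},
\end{align*}
so summing gives $\|fg\|_{H^2(\R_+)} \leq C \|f\|_{H^2(\R_+)} \|g\|_{BUC^2(\R_{\geq 0})}$ for a universal constant $C$. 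Bilinearity is obvious, and continuity follows from this bound by the standard argument that a bounded bilinear map is continuous.

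There is no real obstacle here; the only point that needs a moment's care is the justification that the Leibniz rule produces the correct \emph{weak} derivatives when $f$ is only in $H^2$ rather than classically differentiable, which the smooth-approximation argument above handles cleanly using the uniform boundedness of $g$ and its first two derivatives.
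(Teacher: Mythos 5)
Your proof is correct and follows essentially the same route as the paper: approximate $f\in H^2(\R_+)$ by smooth functions, verify the Leibniz rule in the limit, and use $\norm{g^{(j)}}{\infty}$ to bound each term in $L^2$, yielding $\norm{fg}{H^2} \le C\norm{f}{H^2}\norm{g}{BUC^2}$. The paper's version is just slightly more compressed, stating the Leibniz formula and the resulting bound for general order $n$ before specializing to $n=2$.
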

 \begin{proof}
   By density of $C^n \cap H^n$ in $H^n$, $n\in\N$, one can check that Leibniz formula holds for multiplication on $H^n\times BUC^n$, so that
  \[ D^k(uf) =  \sum_{j=0}^k \binom{k}{j} D^{j}u f^{(k-j)} \]
  which is clearly square integrable for $u\in H^n(\R_+)$, $f\in BUC^n(\R_{\geq 0})$ and $k\leq n$. In particular, for $n=2$,
  \[ \abs{D^k(uf)(x)} \leq K \norm{f}{C^2} \sum_{j=0}^k D^{j}u(x),\quad k\leq 2\]
  for some constant $K$, so that for some $\tilde K$,
  \begin{equation} \norm{uf}{H^2(\R_+)} \leq \tilde K \norm{f}{C^2}\norm{u}{H^2}.  \qedhere\end{equation}
\end{proof}
\begin{lem}\label{lem:Tc}
  $T_\zeta$ maps $U$ into $BUC^2(\R)$. Moreover, $T_\zeta w$ and its first two derivatives are Lipschitz continuous for all $w\in U$.
\end{lem}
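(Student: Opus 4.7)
My plan is to reduce everything to straightforward Cauchy--Schwarz and Fubini estimates, exploiting the uniform $L^2$-bounds on $\zeta^{(i)}(x,\cdot)$ provided by Assumption~\ref{a:zeta}. Set
\[ \Phi_i(x) := \int_\R \zeta^{(i)}(x,y)\, w(y)\, \d y, \qquad i=0,1,2,3, \; x\in\R, \]
so that $\Phi_0 = T_\zeta w$. By Cauchy--Schwarz and~\eqref{eq:Azeta},
\[ |\Phi_i(x)| \leq \|\zeta^{(i)}(x,\cdot)\|_{L^2} \|w\|_{L^2} \leq C_i \|w\|_{L^2}, \qquad C_i := \sup_{x\in\R}\|\zeta^{(i)}(x,\cdot)\|_{L^2} < \infty, \]
for $i=0,1,2,3$. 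Thus each $\Phi_i$ is bounded.

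The key step is to show that $\Phi_i' = \Phi_{i+1}$ for $i=0,1,2$. Since $\zeta(\cdot,y)\in C^3(\R)$ by Assumption~\ref{a:zeta}, the fundamental theorem of calculus gives $\zeta^{(i)}(x,y)-\zeta^{(i)}(x',y) = \int_{x'}^{x}\zeta^{(i+1)}(s,y)\,\d s$ pointwise in $y$. Multiplying by $w(y)$ and integrating, the Fubini--Tonelli theorem applies since the majorant $|x-x'|^{1/2} C_{i+1}\|w\|_{L^2}$ controls the absolute integral (again by Cauchy--Schwarz in $y$ and the uniform bound $C_{i+1}$), yielding
\[ \Phi_i(x) - \Phi_i(x') = \int_{x'}^{x} \Phi_{i+1}(s)\,\d s. \]
Combined with the pointwise bound $|\Phi_{i+1}(s)|\leq C_{i+1}\|w\|_{L^2}$, this identity shows that $\Phi_i$ is Lipschitz with constant $C_{i+1}\|w\|_{L^2}$ and absolutely continuous with derivative $\Phi_{i+1}$. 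Iterating for $i=0,1,2$ gives that $T_\zeta w = \Phi_0$ is twice continuously differentiable with $(T_\zeta w)^{(i)} = \Phi_i$, and each of $\Phi_0,\Phi_1,\Phi_2$ is bounded and Lipschitz. In particular all three are uniformly continuous, so $T_\zeta w\in BUC^2(\R)$ and the Lipschitz claim follows.

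There is no real obstacle here: the only thing to be careful about is the application of Fubini to interchange the $s$- and $y$-integrals, which is legitimized by the uniform-in-$x$ $L^2$-bound on $\zeta^{(i+1)}(x,\cdot)$ together with $w\in L^2(\R)$. All Lipschitz constants obtained are linear in $\|w\|_{L^2}$, which will be convenient when passing to the Hilbert--Schmidt estimates for $\cC$ later on.
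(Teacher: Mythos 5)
Your proof is correct and follows essentially the same approach as the paper: Cauchy--Schwarz together with the uniform-in-$x$ bound on $\|\zeta^{(i)}(x,\cdot)\|_{L^2}$ gives boundedness, while the fundamental-theorem-of-calculus identity $\Phi_i(x)-\Phi_i(x')=\int_{x'}^x\Phi_{i+1}(s)\,\mathrm{d}s$ (justified by Fubini) yields both the Lipschitz bounds and the identification $(T_\zeta w)^{(i)}=T_{\zeta^{(i)}}w$; the paper instead first justifies differentiation under the integral sign via a locally dominating function and Lebesgue's differentiation lemma, and then derives the Lipschitz estimate separately, but the underlying computation is the same. One minor slip: the dominating bound that legitimizes Fubini is $|x-x'|\,C_{i+1}\|w\|_{L^2}$, not $|x-x'|^{1/2}\,C_{i+1}\|w\|_{L^2}$, although for Fubini only finiteness matters, so the argument is unaffected.
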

\begin{proof}
  First note that for all $x,y\in \R$ it holds that 
  \[ \sup_{-1<\epsilon <1}\abs{ \ddx \zeta^{(i)}(x+\epsilon,y)} \leq \int_{-1}^1 \abs{\zeta^{(i+1)}(x+\epsilon',y)} \d \epsilon' + \abs{\zeta^{(i)}(x,y)},\quad i=0,...,2\]
  so that Lebesgue's differentiation lemma gives 
  \begin{equation}
    \tfrac{\d}{\d x} T_{\zeta^{(i)}}w(x) = T_{\zeta^{(i+1)}}w(x),\quad x\in \R,\;w\in U.\label{eq:IntDiff}
  \end{equation}
  Hence, it suffices to show that $T_\zeta w\in BUC(\R)$ provided that~\eqref{eq:Azeta} holds for $i=0$ and $i=1$. For fixed $w\in U$ and $x_1$, $x_2\in\R$ we directly get
  \begin{equation}
    \label{eq:TLip}
    \begin{split}
      \abs{T_\zeta w(x_1) - T_\zeta w(x_2)} &\leq \int_\R \int_0^1 \abs{ \zeta'(x_2 + \epsilon (x_1-x_2), y)} \abs{w(y)} \d \epsilon \d y \abs{x_1-x_2} \\
      &\leq \sup_{x\in \R} \norm{\zeta'(x,.)}{L^2(\R)} \norm{w}{L^2} \abs{x_1-x_2}.
    \end{split}
  \end{equation}
  Here, we used fundamental theorem of calculus, Tonelli's theorem and the Cauchy-Schwartz inequality. Hence, $T_\zeta w$ is globally Lipschitz and particularly uniformly continuous. Analogously, Cauchy-Schwartz inequality yields
  \begin{equation}
    \sup_{x\in \R}\abs{T_\zeta w(x)} \leq \sup_{x\in \R} \norm{\zeta(x,.)}{L^2(\R)} \norm{w}{U} <\infty.\label{eq:Tcnorm}  \qedhere
  \end{equation}
\end{proof}
\begin{rmk}\label{rmk:TzL}
  From~\eqref{eq:Tcnorm} we immediatly get $T_\zeta \in L(U, BUC^2(\R))$. However, in general $T_\zeta$ itself is not Hilbert-Schmidt. To get the Hilbert-Schmidt property we need the multiplication with $N_\sigma$ as we will show in the next lemma.
\end{rmk}
\begin{lem}\label{lem:HSest}
  Let $\Delta$ be the Dirichlet Laplacian on $L^2(\R_+)$. For $u\in \dom(\Delta)$ and $x_*\in \R$ it holds that
  %\mathfrak J:= \left\{u\in \DA \cond u\mHC g \in \DA \,\forall g\in \BUC^2\right\}$ holds
  \[\norm{u \* (\theta_{x_*}\circ T_\zeta(.)))}{\HS(U; \dom(\Delta))} \leq K \norm{u}{\Delta} \sup_{x\in \R}\sum_{i=0}^2\norm{\zeta^{(i)}(x,.)}{L^2}\]
\end{lem}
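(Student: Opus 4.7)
\textbf{Proof plan for Lemma~\ref{lem:HSest}.} Denote by $\Phi \colon U \to \dom(\Delta)$ the operator $\Phi(w)(x) = u(x)\,(T_\zeta w)(x_*+x)$. Because $\dom(\Delta) = H^2(\R_+)\cap H^1_0(\R_+)$, its graph norm is equivalent to the $H^2$-norm, and since $u(0)=0$ the product also vanishes at $0$, so it suffices to bound the Hilbert-Schmidt norm in $\HS(U;H^2(\R_+))$. Fix an orthonormal basis $(e_n)_{n\in\N}$ of $U=L^2(\R)$. Using the Leibniz rule (justified by Lemma~\ref{lem:HtimesC}) and the differentiation identity $\tfrac{\d}{\d x}T_\zeta w = T_{\zeta'} w$ established in~\eqref{eq:IntDiff}, one has for $k \in \{0,1,2\}$
\[ D^k \Phi(w)(x) \;=\; \sum_{j=0}^{k} \binom{k}{j}\, u^{(j)}(x)\,\bigl(T_{\zeta^{(k-j)}} w\bigr)(x_*+x) \;=\; \langle \phi_{k,x},\, w\rangle_{L^2(\R)}, \]
where $\phi_{k,x}(y) := \sum_{j=0}^k \binom{k}{j} u^{(j)}(x)\,\zeta^{(k-j)}(x_*+x, y)$ is an element of $L^2(\R)$ by Assumption~\ref{a:zeta}.

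The key step is to invoke Parseval's identity \emph{pointwise in} $x$, before integrating: for each fixed $x\in\R_+$ and $k\le 2$,
\[ \sum_{n\in\N} \bigl|D^k\Phi(e_n)(x)\bigr|^2 \;=\; \sum_{n\in\N}\bigl|\langle \phi_{k,x}, e_n\rangle\bigr|^2 \;=\; \|\phi_{k,x}\|_{L^2(\R)}^2. \]
By the triangle inequality and Assumption~\ref{a:zeta},
\[ \|\phi_{k,x}\|_{L^2(\R)} \;\le\; \left(\sup_{y\in\R}\sum_{i=0}^{2}\|\zeta^{(i)}(y,\cdot)\|_{L^2}\right) \sum_{j=0}^{k}\binom{k}{j}\,|u^{(j)}(x)|, \]
the shift $x\mapsto x_*+x$ playing no role since the bound is uniform in $y$. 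Squaring, integrating over $\R_+$, and using $\sum_{j=0}^2 \|u^{(j)}\|_{L^2(\R_+)}^2 \le \|u\|_{H^2(\R_+)}^2$ yields
\[ \int_0^\infty \|\phi_{k,x}\|_{L^2(\R)}^2\, dx \;\le\; C_k \|u\|_{H^2(\R_+)}^2 \left(\sup_{y\in\R}\sum_{i=0}^{2}\|\zeta^{(i)}(y,\cdot)\|_{L^2}\right)^2. \]

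Summing the three contributions $k=0,1,2$ gives
\[ \|\Phi\|_{\HS(U;H^2(\R_+))}^2 \;=\; \sum_{k=0}^{2}\sum_{n}\|D^k\Phi(e_n)\|_{L^2(\R_+)}^2 \;\le\; C\,\|u\|_{H^2}^2 \left(\sup_{y\in\R}\sum_{i=0}^2\|\zeta^{(i)}(y,\cdot)\|_{L^2}\right)^2, \]
and equivalence of $\|\cdot\|_{H^2}$ with the graph norm $\|\cdot\|_\Delta$ on $\dom(\Delta)$ concludes the proof. The main conceptual point—and the one pitfall to avoid—is that a naive operator-norm bound of the form $\|u\cdot \theta_{x_*}T_\zeta w\|_{H^2}\lesssim \|u\|_{H^2}\|T_\zeta w\|_{BUC^2}\|w\|_U$ from Lemma~\ref{lem:HtimesC} and Lemma~\ref{lem:Tc} would not suffice to establish the Hilbert-Schmidt property; exchanging the $n$-sum with the $x$-integration and applying Parseval to the representing functionals $\phi_{k,x}$ is what delivers the Hilbert-Schmidt estimate.
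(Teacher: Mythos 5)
Your proof is correct and follows essentially the same route as the paper: expand via Leibniz, swap the Hilbert--Schmidt sum with the spatial integral by Tonelli, apply Parseval's identity pointwise in $x$, and invoke the uniform $L^2$-bound on the $\zeta^{(i)}(x,\cdot)$ together with the equivalence of $\norm{\cdot}{H^2}$ and the graph norm of $\Delta$. The only cosmetic difference is that you bound $\norm{\Phi}{\HS(U;H^2)}$ by summing the three terms $k=0,1,2$ via the representing functionals $\phi_{k,x}$, whereas the paper works with the graph norm $\norm{u T_\zeta e_k}{L^2}^2 + \norm{\Delta(u T_\zeta e_k)}{L^2}^2$ directly.
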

\begin{rmk}
  This result immediately extends to $\cC(u)$, because Assumption~\ref{a:sigma} and Lemma~\ref{lem:HtimesC} assure $N_\sigma(u)\in \dom(\Delta)$ for all $u\in \dom(\Delta)$. Moreover, note that
  \[ \theta_x\circ T_{\zeta} =   T_{\zeta_x},\]
  where $\zeta_x := \zeta(x+.,.)$ satisfies Assumption~\ref{a:zeta}, too.
\end{rmk}
\begin{proof}
  Linearity and continuity in $w$ follow directly from the construction and Remark~\ref{rmk:TzL} and we are now interested in the Hilbert Schmidt norm. Without loss of generality, we can choose $x_* = 0$. So denote by $(e_k)$ an arbitrary CONS of $U$, then
  \begin{equation}
    \norm{u\*(T_\zeta (.))}{\HS(U,\dom(\Delta))}^2 = \sum_{k=1}^\infty     \norm{u\* (T_\zeta e_k)}{L^2}^2 +     \norm{\Delta(u \* (T_\zeta e_k))}{L^2}^2,
  \end{equation}
  and the first sum equals
  \begin{multline}
    \sum_{k=1}^\infty \int_{\R_+} u(x)^2 \langle \zeta(x, .), e_k\rangle_{L^2(\R)}^2\\
    = \int_{\R_+} u(x)^2 \norm{\zeta(x,.)}{L^2(\R)}^2 
    \leq \norm{u}{L^2(\R_+)}^2 \sup_{x\in \R}\norm{\zeta(x,.)}{L^2(\R)}^2,
  \end{multline}
  where we used Tonelli's theorem and Parseval's identity for the first equality. To bound the second sum we proceed on exactly the same way but first apply Leibnitz rule to get the second (weak) derivative
  \begin{multline}
    \sum_{k=1}^\infty \sum_{i=0}^2 \binom{2}{i} \int_{\R_+}\abs{\tfrac{\partial^{i}}{\partial x^i}u(x)}^2\ \langle \zeta^{(2-i)}(x,.),e_k\rangle_{L^2(\R)}^2 \\
    \leq \sum_{i=0}^2  \binom{2}{i}\norm{\tfrac{\partial^{i}}{\partial x^i} u}{L^2(\R_+)}^2 \sup_{x\in \R}\norm{\zeta^{(2-i)}(x,.)}{L^2(\R)}^2\\
    \leq 4 \norm{u}{H^2(\R_+)}^2 \sup_{x\in\R}\sum_{i=0}^2\norm{\zeta^{(i)}(x,.)}{L^2(\R)}^2.
  \end{multline}
  By equivalence of $\norm{.}{H^2}$ and $\norm{.}{\Delta}$ the result follows.
\end{proof}

To show the main result of this appendix, we just need to combine the previous lemmas.
\begin{thm}\label{thm:CLip}
  The map $\cC:\DA \rightarrow \HS(U,\DA)$ is Lipschitz continuous on bounded sets.
\end{thm}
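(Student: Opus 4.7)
The plan is to reduce the claim to bounding the HS-norm of each of the two nontrivial components of $\cC(u)-\cC(v)$, both of which have the form $(u,x_*)\mapsto N_\sigma(u)\cdot(\theta_{x_*}\circ T_\zeta)(\cdot)$ studied in Lemma~\ref{lem:HSest}. Given $u=(u_1,u_2,u_3)$ and $v=(v_1,v_2,v_3)$ in a ball of radius $r$ in $\DA$, for the first component I would add and subtract to decompose
\[
N_{\sigma_+}(u_1)\cdot\theta_{u_3}T_\zeta - N_{\sigma_+}(v_1)\cdot\theta_{v_3}T_\zeta = \bigl(N_{\sigma_+}(u_1)-N_{\sigma_+}(v_1)\bigr)\cdot\theta_{u_3}T_\zeta + N_{\sigma_+}(v_1)\cdot(\theta_{u_3}-\theta_{v_3})T_\zeta,
\]
and do the analogous splitting for the second component.

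For the first summand, I would apply Lemma~\ref{lem:HSest} directly, using that the shifted kernel $\zeta_{u_3}(x,y):=\zeta(u_3+x,y)$ still satisfies Assumption~\ref{a:zeta} with bounds independent of $u_3$. Assumption~\ref{a:sigma}\ref{ai:sigmabc} ($\sigma_\pm(0,0)=0$) is exactly what guarantees that $N_{\sigma_+}$ maps $D=H^2(\R_+)\cap H^1_0(\R_+)$ into itself, so the difference $N_{\sigma_+}(u_1)-N_{\sigma_+}(v_1)$ lies in $\dom(\Delta)$; Theorem~\ref{thm:Nlip} then provides the estimate $\norm{N_{\sigma_+}(u_1)-N_{\sigma_+}(v_1)}{\dom(\Delta)}\le L_r\norm{u_1-v_1}{H^2}$.

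For the second summand, I would use the identity $\theta_{x_*}\circ T_\zeta = T_{\zeta_{x_*}}$ with $\zeta_{x_*}(x,y)=\zeta(x_*+x,y)$, so that $(\theta_{u_3}-\theta_{v_3})T_\zeta = T_{\zeta_{u_3}-\zeta_{v_3}}$. Applying Lemma~\ref{lem:HSest} again, it remains to control
\[
\sup_{x\in\R}\sum_{i=0}^{2}\norm{\zeta^{(i)}(u_3+x,\cdot)-\zeta^{(i)}(v_3+x,\cdot)}{L^2}.
\]
By the fundamental theorem of calculus and Minkowski's integral inequality in the first variable, this is bounded by $|u_3-v_3|\cdot\sup_{s\in\R}\sum_{i=1}^{3}\norm{\zeta^{(i)}(s,\cdot)}{L^2}$, which is finite because Assumption~\ref{a:zeta} provides control up to the \emph{third} spatial derivative of $\zeta$. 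On bounded sets of $\DA$, the factor $\norm{N_{\sigma_+}(v_1)}{\dom(\Delta)}$ is itself bounded (by Lipschitz continuity of $N_{\sigma_+}$ plus $N_{\sigma_+}(0)$ lying in $\dom(\Delta)$ via Assumption~\ref{a:sigma}\ref{ai:sigmabc}).

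Summing the estimates for both components yields a constant $K_r$ such that $\norm{\cC(u)-\cC(v)}{\HS(U,\DA)}\le K_r\,\norm{u-v}{\DA}$ whenever $\norm{u}{\DA},\norm{v}{\DA}\le r$, using equivalence of the $\DA$- and $\H^2$-norms. The only substantive point is the careful use of Assumption~\ref{a:zeta}: the three derivatives of the kernel are needed precisely because bounding the shift difference in the $\dom(\Delta)$-valued HS norm costs both two derivatives (from the HS estimate of Lemma~\ref{lem:HSest}) and one more derivative (from the Lipschitz estimate of the shift in the first variable).
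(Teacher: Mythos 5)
Your proof is correct and follows essentially the same route as the paper's: the same additive decomposition into a difference of Nemytskii operators times a fixed shifted kernel plus a fixed Nemytskii operator times a difference of shifted kernels, the same invocation of Lemma~\ref{lem:HSest} for each piece, the same Lipschitz-in-shift estimate for the kernel (costing one extra derivative, hence the third derivative in Assumption~\ref{a:zeta}), and the same appeal to Theorem~\ref{thm:Nlip} and boundedness of $N_\sigma$ on bounded sets. If anything you are a bit more explicit than the paper about why Assumption~\ref{a:sigma}\ref{ai:sigmabc} is needed to keep $N_{\sigma_\pm}$ valued in $\dom(\Delta)$ rather than merely $H^2(\R_+)$, which is a useful clarification but not a departure in method.
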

\begin{proof}
  By the structure of $\cA$ and $\L^2$, it suffices to show the property for the operator defined in~\eqref{eq:Ccomponent}. 
  Assumption~\ref{a:sigma} yields $N_\sigma(\dom(\Delta))\subset \dom(\Delta) $ and we can apply Lemma~\ref{lem:HSest}. For $u$, $\tilde u\in \dom(\Delta)$, $x_*$, $y_*\in \R$ and writing $ \zeta_{z,\tilde z}(x,y) := \zeta(z+x,y) - \zeta(\tilde z + x)$, it holds that
  \begin{gather*}
    \begin{split}
      &\norm{N_\sigma(u)(\theta_{x}T_\zeta) - N_\sigma(\tilde u) (\theta_{y}T_\zeta) }{\HS} \\
      &\qquad \leq \norm{(N_\sigma(u) - N_\sigma(\tilde u))\* (\theta_{x} T_\zeta .)}{\HS} + \norm{N_\sigma(\tilde u) \* (\theta_0 T_{ \zeta_{x,y}} .)}{\HS}\\
      &\qquad \leq K_\zeta \norm{N_\sigma(u) - N_\sigma (\tilde u)}{\Delta} + K\norm{N_\sigma(\tilde u)}{\Delta} \sup_{z\in \R} \sum_{i=0}^2 \norm{ \zeta_{x,y}^{(i)}(z,.)}{L^2}.
    \end{split}
  \end{gather*}
  A computation similar to~\eqref{eq:TLip} shows
  \begin{equation*}
    \sup_{z\in \R}\norm{\zeta_{x,y}^{(i)}(z,.)}{L^2}^2     
    \leq \abs{x-y}^2 \sup_{z\in \R}\norm{\zeta^{(i+1)}(z,.)}{L^2}^2.
  \end{equation*} % \begin{multline*}
  %   \sup_{x\in \R}\norm{\delta\zeta_{z,0}^{(i)}(x,.)}{L^2}^2\\
  %   \leq \abs{z}^2\int_{\R}\int_0^1 \abs{\zeta^{(i+1)}(x+\epsilon z, y)}^2\d \epsilon \d y 
  %   \leq \abs{z}^2 \sup_{x\in \R}\norm{\zeta^{(i+1)}(x,.)}{L^2}^2.
  % \end{multline*}
  Finally, we put everything together and use that on bounded sets $N_\sigma$ is Lipschitz, and thus bounded, to get the assertion.
\end{proof}

\end{appendix}

%-------------------------------
%\nocite*
%\bibliographystyle{plainnat}
\bibliographystyle{alpha}
\bibliography{litPaper}
%-------------------------------
\end{document}